\theoremstyle{plain} %text of this environment is typesetted in italics
\newtheorem{theorem}{\indent\sc Theorem}[section]
\newtheorem{lemma}[theorem]{\indent\sc Lemma}
\newtheorem{corollary}[theorem]{\indent\sc Corollary}
\newtheorem{proposition}[theorem]{\indent\sc Proposition}
\theoremstyle{definition} %text of this environment is typesetted in roman letters
\newtheorem{definition}[theorem]{\indent\sc Definition}
\newtheorem{remark}[theorem]{\indent\sc Remark}
\newtheorem{example}[theorem]{\indent\sc Example}
\newtheorem{examples}[theorem]{\indent\sc Examples}
\newtheorem{notation}[theorem]{\indent\sc Notation}
\newtheorem{note}[theorem]{\indent\sc Note}
\newcommand{\R}{{{\mathcal R}}}
\newcommand{\MM}{{\mathbb M}}
\DeclareMathOperator{\Hom}{{Hom}}
\DeclareMathOperator{\Spec}{{Spec}}
\DeclareMathOperator{\Coker}{{Coker}}
\DeclareMathOperator{\Ker}{{Ker}}
\DeclareMathOperator{\Ima}{{Im}}
\DeclareMathOperator{\Der}{{Der}}
\newcommand{\leftmapsto}{\leftarrow\!\shortmid}
\newcommand{\dosflechasa}[3][]{\xymatrix@1{\ar@<1ex>[r]^-{#2}
\ar@<-1ex>[r]_-{#3} & }}
\newcommand{\dosflechas}{{\dosflechasa[]{}{}}}
\DeclareMathOperator{\limi}{{lim}}
\newcommand{\ilim}[1]{\,\underset{#1}{\underset{\to}{\limi}}\,}
\newcommand{\plim}[1]{\,\underset{#1}{\underset{\leftarrow}{\limi}}\,}
\title{\uppercase{Geometric characterization of flat modules}} %title of the paper
\author{
\textsc{Carlos Sancho,  Fernando Sancho and  Pedro Sancho} %names of authors
}
\date{} %leave empty
\begin{document}

\maketitle

%%%%%%%%%%%%%%% footnote %%%%%%%%%%%%%%%%
\footnote{ %2010 MSC numbers
2010 \textit{Mathematics Subject Classification}.
Primary 13C11; Secondary 18Axx.
}
\footnote{ %key words and phrases
\textit{Flat modules, Mittag-Leffler, functor of modules}.
}
%\footnote{ %acknowledgment of support etc. if any
%$^{*}$Thanks.
%}
%%%%%%%%%%%%%%%%%%%%%%%%%%%%%%%%%%%%%%%%%
\begin{abstract}

%
%\newtheorem{theorem}{Theorem}[section]
%\newtheorem{lemma}[theorem]{Lemma}
%\newtheorem{proposition}[theorem]{Proposition}
%\newtheorem{corollary}[theorem]{Corollary}
%\newtheorem{remark}[theorem]{Remark}
%\newtheorem{note}[theorem]{Note}
%\newtheorem{caution}[theorem]{Caution}
%\newtheorem{notes}[theorem]{Notes}
%\newtheorem{Formula of adjoint functors}[theorem]{Formula of adjoint functors}
%
%
%\newtheorem{example}[theorem]{Example}
%\newtheorem{examples}[theorem]{Examples}
%\newtheorem{definition}[theorem]{Definition}
%\newtheorem{notation}[theorem]{Notation}
%\newtheorem{notations}[theorem]{Notations}
%\newtheorem{Adjunction formula}[theorem]{\indent\sc Adjunction formula}
%
%\DeclareMathOperator{\limi}{{lim}}
%\newcommand{\ilim}[1]{\,\underset{#1}{\underset{\to}{\limi}}\,}
%\newcommand{\plim}[1]{\,\underset{#1}{\underset{\leftarrow}{\limi}}\,}
%

%\keywords{Flat modules, flat Mittag-Leffler modules, functor of points}

Let $R$ be a commutative ring.  Roughly speaking, we prove that an $R$-module $M$ is flat iff it is a direct limit of  $R$-module affine algebraic varieties, and $M$ is a flat Mittag-Leffler module iff it is  the union of its   $R$-submodule affine algebraic varieties.

\end{abstract}

\maketitle

\section*{Introduction}

Let $R$ be a commutative (associative with unit) ring. All the functors considered in this paper are covariant functors from the category of commutative $R$-algebras  to the category of sets (groups, rings, etc.).

Let $\R$ be the functor of rings defined by  $\R(R'):=R'$ for any commutative  $R$-algebra $R'$.  An $\R$-module $\MM$ is a functor of abelian groups endowed with a morphism of functors
$$\R\times \MM\to\MM$$ satisfying the module axioms (in other words, the morphism $\R\times \MM\to\MM$ yields an $R'$-module structure on $\MM(R')$ for any commutative $R$-algebra $R'$). Morphisms of $\R$-modules, direct and inverse limits and  tensor products, etc., are defined in the obvious way.

Each $R$-module $M$ defines an $\R$-module $\mathcal M$ in the following way: $\mathcal M(R'):=M\otimes_R R'$. $\mathcal M$  is said to be a  {\sl quasi-coherent} $\R$-module. The functors
$$\aligned \text{Category of $R$-modules } & \to \text{ Category of quasi-coherent $\R$-modules }\\ M & \mapsto \mathcal M\\ \mathcal M(R) & \leftmapsto \mathcal M\endaligned$$
stablish an equivalence  of categories.
 Consider the dual functor
$\mathcal M^*:=\mathbb Hom_{\R}(\mathcal M,\R)$ defined by
$\mathcal M^*(R'):=\Hom_{R'}(M\otimes_RR',R')$.   In general, the canonical morphism  $M\to M^{**}$ is not an isomorphism, but surprisingly  $\mathcal M=\mathcal M^{**}$ (see \ref{reflex}), that is, $\mathcal M$ is a reflexive $\R$-module. 
This result has many applications in Algebraic Geometry
 (see \cite{Pedro}), for example the Cartier duality of commutative affine groups and commutative formal groups. 
 
Each $R$-scheme $X$ defines ``the functor of points''  $X^\cdot$ as follows: 
$$X^\cdot (R'):=\Hom_{\Spec R'}(\Spec R', X\times_{\Spec R}\Spec R').$$ It is well known that
$$\Hom_{sch}(X,Y)=\Hom_{funct}(X^\cdot,Y^\cdot),$$ and $X\simeq Y$ iff 
$X^\cdot\simeq Y^\cdot$. Let $\mathbb A^1_R:=\Spec R[x]$ be the affine line. An $R$-scheme $X$ is said to be an $R$-module scheme if there exist  morphisms $$X\times_{\Spec R }X\to X,\quad {\mathbb A}_{R}^1\times_{\Spec R} X\to X$$
satisfying the module axioms. It is well known that $X$ is an 
$R$-module scheme iff $X^\cdot$  is an $\R$-module (we shall say that  $X^\cdot$ is an $\R$-module scheme). 
For example, $\mathcal M^*$ is an $\R$-module (affine) scheme
because $ \mathcal M^*=(\Spec S^\cdot M)^\cdot$. We prove the following proposition.

\begin{proposition} (\ref{T3.3}) An $\mathcal R$-module scheme $X^\cdot$ is isomorphic to $\mathcal M^*$, for some $R$-module $M$, iff $X^\cdot$ is a reflexive $\R$-module affine scheme. 
\end{proposition}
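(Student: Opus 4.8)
The plan is to prove the two implications separately; essentially all the substance lies in the converse.

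\emph{Sufficiency of the condition $X^\cdot\simeq\mathcal M^{*}$.} That $\mathcal M^{*}$ is an $\R$-module affine scheme has already been noted, since $\mathcal M^{*}=(\Spec S^\cdot M)^\cdot$; so only its reflexivity has to be checked, and this I would get formally from \ref{reflex}. For any $\R$-module $\mathcal P$ there is a canonical morphism $\eta_{\mathcal P}\colon \mathcal P\to\mathcal P^{**}$, natural in $\mathcal P$, and the triangle identity for the duality functor $(-)^{*}$ reads $(\eta_{\mathcal P})^{*}\circ\eta_{\mathcal P^{*}}=\mathrm{id}_{\mathcal P^{*}}$. Taking $\mathcal P=\mathcal M$, \ref{reflex} says $\eta_{\mathcal M}$ is an isomorphism; hence $(\eta_{\mathcal M})^{*}$ is an isomorphism, hence so is $\eta_{\mathcal M^{*}}$, i.e. $\mathcal M^{*}$, and therefore $X^\cdot$, is reflexive.

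\emph{Necessity: setup.} Suppose $X^\cdot$ is a reflexive $\R$-module affine scheme, $X=\Spec A$, and set $\mathcal N:=(X^\cdot)^{*}$. Since $X$ is affine, the functor $\mathbb Hom(X^\cdot,\R)$ of \emph{all} morphisms of functors is the quasi-coherent module $\mathcal A$ (indeed $\Hom_{R'\text{-}\mathrm{alg}}(R'[x],A\otimes_RR')=A\otimes_RR'$ for every $R'$), and $\mathcal N=\mathbb Hom_{\R}(X^\cdot,\R)$ is the subfunctor of $\mathcal A$ consisting, at each $R'$, of those $f\in A\otimes_RR'$ compatible with the co-addition $\Delta$ and the co-scalar-multiplication $\sigma$; thus $\mathcal N=\Ker(\mathcal A\to\mathcal B)$, where $B:=(A\otimes_RA)\oplus A[x]$ and the morphism is the one induced by $f\mapsto(\Delta f-f\otimes1-1\otimes f,\ \sigma f-xf)$. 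Let $M:=\mathcal N(R)\subseteq A$ be the $R$-module of linear forms of $X$. There is a canonical morphism $\theta\colon\mathcal M\to\mathcal N$, the identity on $R$-points, namely the factorisation through $\mathcal N$ of the morphism $\mathcal M\to\mathcal A$ induced by the inclusion $M\hookrightarrow A$.

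\emph{The crux and its reduction.} One must show that $\theta$ is an isomorphism, i.e. that $\mathcal N=(X^\cdot)^{*}$ is quasi-coherent; this is not automatic, since the pointwise kernel $\Ker(\mathcal A\to\mathcal B)$ need not commute with base change. Granting it, $(X^\cdot)^{*}\simeq\mathcal M$, and then reflexivity gives $X^\cdot\simeq(X^\cdot)^{**}\simeq\mathcal M^{*}$, which is what we want. To prove it I would first pass to the dual: $X^\cdot$ being reflexive, so is $\mathcal N$ (triangle identity again), while $\mathcal M$ is reflexive by \ref{reflex}; by naturality of $\eta$ it follows that $\theta$ is an isomorphism iff $\theta^{*}\colon\mathcal N^{*}=X^\cdot\to\mathcal M^{*}$ is. Now $\theta^{*}$ is a morphism of affine $\R$-module schemes $\Spec A\to\Spec S^\cdot M$, i.e. the $R$-algebra homomorphism $\iota\colon S^\cdot M\to A$ extending the inclusion $M\hookrightarrow A$; so everything reduces to showing that $\iota$ is an isomorphism — equivalently, that a reflexive affine $\R$-module scheme has its coordinate algebra freely generated by its linear forms.

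\emph{Main obstacle.} This last step is where reflexivity is genuinely needed, and is the part I expect to be delicate. For an arbitrary affine $\R$-module scheme $\iota$ need be neither injective nor surjective: e.g. over a field $k$ of characteristic $p$, $\alpha_p=\Spec k[x]/(x^{p})$ is an $\R$-module affine scheme whose module of linear forms is one-dimensional, so its algebra is not the polynomial algebra $S^\cdot M$, and correspondingly $\alpha_p^\cdot$ is not reflexive. To treat $\iota$ I would use that $\sigma$ endows $A$ with a nonnegative grading $A=\bigoplus_{n\ge0}A_n$ with $A_0=R$ (the zero-section axiom) and $A_1=M$ (the counit axiom forces degree-one elements to be primitive), so that $\iota$ is the canonical graded map $S^nM\to A_n$ in each degree; then I would feed back the hypothesis that $X^\cdot=\mathcal N^{*}$ is a \emph{scheme}, together with \ref{reflex} applied to the quasi-coherent modules obtained by dualising the chain $\mathcal M\hookrightarrow\mathcal N\hookrightarrow\mathcal A$, to identify each $A_n$ with $S^nM$. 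The hard point will be producing enough linear forms after an arbitrary base change — precisely the quasi-coherence of $\mathcal N$ — which is exactly where the affineness of $X^\cdot=\mathcal N^{*}$ has to enter.
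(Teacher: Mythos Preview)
Your architecture matches the paper's: the forward direction is immediate from reflexivity of $\mathcal M$ (Theorem~\ref{reflex}), and for the converse the whole content is that $\mathcal N:=(X^\cdot)^*$ is quasi-coherent (this is the paper's Theorem~\ref{121}), after which reflexivity yields $X^\cdot=(X^\cdot)^{**}=\mathcal N^*\simeq\mathcal M^*$ with $M:=\mathcal N(R)$. You also set up exactly the right ingredients: the $G_m$-grading $A=\bigoplus_{n\ge0}A_n$, the identification $A_0=R$, and the fact that degree-one elements are primitive so that $A_1=M$.

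The gap is that you stop one line short and then head in a harder direction. You flag ``producing enough linear forms after an arbitrary base change'' as the main obstacle, dualise to the scheme side, and propose to prove that $\iota\colon S^\cdot M\to A$ is an isomorphism by showing $A_n\simeq S^nM$ in each degree --- without giving any mechanism for $n\ge2$. But the obstacle you name is not hard: the grading is a \emph{direct-sum} decomposition of $A$, so it commutes with any base change, i.e.\ $(A\otimes_RR')_1=A_1\otimes_RR'$. Now simply rerun your own argument for $A_1=M$ with $R'$ in place of $R$: it gives $(A\otimes_RR')_1=(X^\cdot_{|R'})^*(R')=\mathcal N(R')$. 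Combining, $\mathcal N(R')=M\otimes_RR'$, and $\mathcal N$ is quasi-coherent; this is precisely step~6 in the paper's proof of Theorem~\ref{121}. Two further points: this argument uses only affineness of $X$, not reflexivity --- reflexivity enters only in the final line $X^\cdot=(X^\cdot)^{**}$ --- so your plan to ``feed back'' reflexivity into the quasi-coherence argument aims the hypothesis at the wrong place; and the statement $A_n\simeq S^nM$ for all $n$ is equivalent to the conclusion itself (it says $X\simeq\Spec S^\cdot M$), so it cannot serve as an independent stepping stone.
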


An $R$-module $M$ is a  finitely generated projective $R$-module iff $\mathcal M$ is an $\R$-module quasi-compact and quasi-separated  scheme (see \ref{PFS}). 
The Govorov-Lazard Theorem states that an  $R$-module $M$ is flat iff $\mathcal M$ is isomorphic to a direct limit (of functors of points of) of affine n-spaces, $\mathbb A^n_R$  (with the natural structure of $\R$-module). If $R$ is a field and $\mathbb M$ is a reflexive $\mathcal R$-module,  then  $\mathbb M$ is isomorphic to a direct limit $\ilim{i} \mathcal N_i^*$ (see \cite{Pedro}). 
 In this paper we prove the following theorem.

\begin{theorem}   Let $M$ be an $R$-module. The following statements are equivalent
\begin{enumerate}

\item $M$ is a flat $R$-module.

\item $\mathcal  M$ is a direct limit of $\R$-module schemes.

\item $\mathcal M=\ilim{i} \mathcal N_i^*$, that is, $\mathcal M$ is a direct limit of reflexive $\R$-module affine schemes.

\end{enumerate}

\end{theorem}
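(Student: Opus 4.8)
Here is how I would attack the statement.

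The plan is to prove $(1)\Rightarrow(3)\Rightarrow(2)\Rightarrow(1)$, the first two implications being essentially formal and all the difficulty lying in the last. For $(1)\Rightarrow(3)$ one invokes Govorov--Lazard directly: a flat $M$ is a filtered direct limit $M=\ilim{i}R^{n_i}$ of finitely generated free modules, the assignment $N\mapsto\mathcal N$ preserves direct limits, and for a finitely generated free $N=R^{n}$ one has $\mathcal N^{*}(R')=\Hom_{R'}((R')^{n},R')=(R')^{n}=\mathbb A^{n}_{R}(R')$, i.e.\ $\mathcal N^{*}=\mathbb A^{n}_{R}=\mathcal R^{n}$; hence $\mathcal M=\ilim{i}(\mathcal R^{n_i})^{*}$, which is $(3)$ with $N_i=R^{n_i}$. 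For $(3)\Rightarrow(2)$ there is nothing to do: each $\mathcal N_i^{*}=(\Spec S^{\cdot}_R N_i)^{\cdot}$ is an $\R$-module affine scheme, so a direct limit of the $\mathcal N_i^{*}$ is in particular a direct limit of $\R$-module schemes.

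For $(2)\Rightarrow(1)$, write $\mathcal M=\ilim{i}X_i^{\cdot}$ with the $X_i$ being $\R$-module schemes; we must prove $M$ flat. I would use the criterion that $M$ is flat iff $\mathrm{Tor}_1^{R}(M,N)=0$ for every finitely presented $R$-module $N$ (equivalently for $N=R/I$ with $I$ finitely generated). Evaluating the colimit of functors at $R$ gives $M=\mathcal M(R)=\ilim{i}X_i^{\cdot}(R)=\ilim{i}X_i(R)$ as $R$-modules, and since $\mathrm{Tor}$ commutes with filtered colimits in the first variable, $\mathrm{Tor}_1^{R}(M,N)=\ilim{i}\mathrm{Tor}_1^{R}(X_i(R),N)$. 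So the theorem reduces to the vanishing of this colimit for every finitely presented $N$: every $\mathrm{Tor}$-class coming from some $X_j(R)$ must be annihilated by the transition morphism $X_j^{\cdot}\to X_{j'}^{\cdot}$ for suitable $j'\ge j$. Equivalently, in the language of the equational criterion: any morphism $\mathcal P\to\mathcal M$ out of the quasi-coherent module of a finitely presented $P$ factors through some $X_j^{\cdot}$ — because $\Hom_{\R}(\mathcal R^{n},-)$ is $(-)(R)^{n}$, which commutes with filtered colimits, and $\mathcal P=\Coker(\mathcal R^{m}\to\mathcal R^{n})$ is a finite cokernel of free modules, so $\mathcal P$ is a finitely presentable object of the category of $\R$-modules — and one must then show that, after composition with a further transition map, it factors through a finitely generated free $\R$-module; this yields that $P\to M$ factors through a finitely generated free $R$-module, hence that $M$ is flat.

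The hard part is precisely this vanishing, and the essential difficulty is that it cannot be verified on a single $X_j$: the module $X_j(R)$ of rational points of an $\R$-module scheme need not be flat. Indeed $\mathcal N^{*}(R)=N^{*}$ can fail to be flat over a non‑noetherian ring — for $R=k[x,y]/(xy)$ and $N=R/(y)$ one has $S^{\cdot}_R N\cong R[t]/(yt)$, so $\mathcal N^{*}=(\Spec R[t]/(yt))^{\cdot}$ with $\mathcal N^{*}(R)=\mathrm{Ann}_R(y)\cong R/(y)$, and $\mathrm{Tor}_1^{R}(R/(y),R/(x,y))\neq 0$ — so the individual groups $\mathrm{Tor}_1^{R}(X_i(R),N)$ are genuinely nonzero in general and only the colimit collapses them. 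What has to be exploited beyond ``each $X_i(R)$ is an $R$-module and $\ilim{i}X_i(R)=M$'' is both that every $X_i$ is a scheme and that the colimit $\ilim{i}X_i^{\cdot}=\mathcal M$ is \emph{quasi-coherent}, i.e.\ commutes with arbitrary base change $R\to R'$, not merely with evaluation at $R$. Here I expect the argument to use that an $\R$-module scheme is affine and, via Proposition \ref{T3.3} and the structure of its coordinate bialgebra, can be related within the direct system to the reflexive schemes $\mathcal N^{*}$, thereby reducing the vanishing to a Mittag--Leffler type property of the pro-system $\ilim{i}\mathcal N_i$ dual to the ind-system $\ilim{i}\mathcal N_i^{*}=\mathcal M$; this Mittag--Leffler input, forced by quasi-coherence of the colimit, is the technical heart of the proof. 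Once it is available, $\mathrm{Tor}_1^{R}(M,N)=0$ for all finitely presented $N$ and $M$ is flat; equivalently, one then shows that $\mathcal M$ is the direct limit of all affine spaces $\mathbb A^{n}_{R}$ mapping to it and concludes by Govorov--Lazard.
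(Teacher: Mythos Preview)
Your implications $(1)\Rightarrow(3)\Rightarrow(2)$ are fine; invoking Govorov--Lazard for $(1)\Rightarrow(3)$ is legitimate and in fact cleaner than the paper's route, which avoids Govorov--Lazard as a black box and instead writes $M=\ilim{i}N_i$ with $N_i$ finitely presented and shows directly that $\mathcal M=\ilim{i}\mathcal N_{i,sch}$ using Lemma~\ref{b} and Proposition~\ref{L5.11}.

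The genuine gap is in $(2)\Rightarrow(1)$. You correctly isolate the difficulty---the individual $X_i(R)$ need not be flat---but your proposed resolution does not close. Factoring $\mathcal P\to\mathcal M$ through some $X_j^\cdot$ gets you nowhere without structural information about $X_j^\cdot$, and your appeal to Proposition~\ref{T3.3} does not supply it: that proposition only identifies which affine $\R$-module schemes are of the form $\mathcal N^*$, it does not let you replace an arbitrary $X_j^\cdot$ by one, nor does it produce the ``further factorization through a free module'' you need. The Mittag--Leffler speculation is likewise off target: no pro-system or Mittag--Leffler condition enters the paper's argument here, and your assumption that module schemes must be affine is unnecessary (and not used).

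The paper's device is entirely different and short. For any $\R$-module $\mathbb F$ set
\[
\bar{\mathbb F}(N):=\Ker\bigl[\mathbb F(R\oplus N)\to\mathbb F(R)\bigr],
\]
where $R\oplus N$ carries the square-zero extension structure. One checks $\overline{\mathcal M}(N)=M\otimes_R N$, that $\bar{(-)}$ commutes with filtered colimits, and---this is the geometric input, Proposition~\ref{L5.112}---that for \emph{any} $R$-module scheme $X$ the map $\overline{X^\cdot}(N)\to\overline{X^\cdot}(N')$ is injective whenever $N\hookrightarrow N'$ is. The reason is elementary scheme theory: $\Spec(R\oplus N')\to\Spec(R\oplus N)$ is a homeomorphism onto $\Spec R$ and induces injections on local rings, so two morphisms $\Spec(R\oplus N)\rightrightarrows X$ that agree after precomposition with it are equal. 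Since a filtered colimit of injections is injective, writing $\mathcal M=\ilim{i}\mathbb N_i$ with $\mathbb N_i$ module schemes gives
\[
M\otimes_R N=\ilim{i}\overline{\mathbb N_i}(N)\hookrightarrow\ilim{i}\overline{\mathbb N_i}(N')=M\otimes_R N',
\]
so $M$ is flat. No Tor computation, no affineness hypothesis, and no Mittag--Leffler machinery is needed.
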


 Given an $\R$-module $\mathbb M$,  consider the reflexive $\R$-module affine scheme   $$ \mathbb M_{sch}:=(\Spec S^\cdot \mathbb M^*(R))^\cdot.$$ There exist a natural morphism $\mathbb 
 M\to \mathbb M_{sch}$ and a functorial equality
 $$\Hom_{\R}(\mathbb M_{sch},\mathcal N^*)=\Hom_{\R}(\mathbb M,\mathcal N^*),$$
 for any reflexive $\R$-module affine scheme $\mathcal N^*$.
 Let $R$ be a Noetherian ring, $M$ an $R$-module and  $\{M_i\}_{i\in I}$ the set of the finitely generated  submodules of  $M$. We prove that $M$ is flat iff  $$\mathcal M=\ilim{i\in I} \mathcal M_{i,sch}.$$
 In other words (see \ref{NMP}), $M$ is flat iff
 $$M\otimes_R N=\ilim{i}\Hom_R(M_i^*,N),$$
 for any $R$-module $N$.
 In \ref{elteorema}, we generalize this result to characterize flat quasi-coherent sheaves on Noetherian schemes. Let $M_i':=\Ima[M^*\to M_i^*]$. In a next paper we will prove that $M$ is a strict flat Mittag-Leffler $R$-module iff $$M\otimes_R N=\ilim{i}\Hom_R(M_i',N),$$
 for any $R$-module $N$.

Finally, we characterize flat  Mittag-Leffler modules.
Mittag-Leffler conditions were first introduced by Grothendieck in \cite{Grot}, and deeply studied
by some authors, for example, Raynaud and Gruson in \cite{RaynaudG}. $M$ is said to be a flat   Mittag-Leffler module if it is a direct limit of free finite $R$-modules $\{L_i\}$ such that the inverse system $\{L_i^*\}$
 satisfies the usual Mittag-Leffler condition: for each $i$ there exists $j\geq  i$ such that for $k\geq j$ we have $\Ima(L_k^*\to L_i^*)=\Ima(L_j^*\to L_i^*)$, (see \cite{Raynaud}).

\begin{theorem} \label{MLI} Let $M$ be an $R$-module. The following statements are equivalent

\begin{enumerate}
\item $M$ is a flat Mittag-Leffler module..

\item $\mathcal M$ is equal to the direct limit of its $\R$-submodule affine schemes.

\item $\mathcal M=\ilim{i} \mathcal N_i^*$, where the morphism $\mathcal N_i^*\to \mathcal N_j^*$ is a monomorphism for any $i\leq j$, that is, $\mathcal M$  is equal to the direct limit of its reflexive $\R$-submodule affine schemes.

\item The kernel of any morphism $\mathcal N^*\to \mathcal M$ 
is a reflexive $\mathcal R$-module affine scheme.
 
\item The kernel of any morphism $\mathcal R^n\to \mathcal M$ 
is a reflexive $\mathcal R$-module affine scheme.

\item Any morphism $\mathcal N^*\to \mathcal M$ factors through an $\R$-submodule scheme $\mathcal W^*$ of $\mathcal M$.

\item Any morphism $\mathcal R^n\to \mathcal M$ factors through an $\R$-submodule scheme $\mathcal W^*$ of $\mathcal M$.

\end{enumerate}
\end{theorem}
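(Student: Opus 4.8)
The plan is to prove the equivalences through the chain of implications $(1)\Rightarrow(3)\Rightarrow(2)\Rightarrow(7)\Rightarrow(6)\Rightarrow(4)\Rightarrow(5)\Rightarrow(1)$. Two remarks make the whole loop manageable: each of $(2),(3),(6),(7)$ at once exhibits $\mathcal M$ as a direct limit of $\R$-module schemes, and $(4),(5)$ do so after a short argument with images, so by the preceding theorem \emph{flatness of $M$ may be assumed throughout}; and everything reduces to the tools already in hand, namely Proposition \ref{T3.3} and the reflexivity $\mathcal M=\mathcal M^{**}$ (\ref{reflex}), which present a reflexive $\R$-module affine scheme as a dual $\mathcal N^*$ and give $\Hom_\R(\mathcal N^*,\mathcal W^*)=\Hom_R(W,N)$; the identity $\Hom_\R(\mathcal N^*,\mathcal M)=N\otimes_R M=\ilim{N_0}N_0\otimes_R M$ for quasi-coherent $\mathcal M$ (the functorial form of \ref{NMP}, the colimit over finitely generated submodules $N_0\subseteq N$); and the computation that the kernel of a morphism of reflexive affine schemes $\mathcal N^*\to\mathcal W^*$ corresponding to $f\colon W\to N$ is $(\widetilde{\Coker f})^*$, again reflexive affine.

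The core is $(1)\Rightarrow(3)$. Write $\mathcal M=\ilim{i}\mathcal L_i$ with $L_i$ finite free and the inverse system $\{L_i^*\}$ Mittag--Leffler, and pass to a cofinal subsystem so that for each $i$ the decreasing chain of images $\Ima(\phi_{ij}^*\colon L_j^*\to L_i^*)$ is already stationary, with value a fixed finitely generated $N_i\subseteq L_i^*$. The decisive elementary point is that $\phi_{ij}^*$ then maps $N_j$ \emph{onto} $N_i$, so $\{N_i\}$ is an inverse system with surjective transition maps. Dualizing the surjections $L_j^*\twoheadrightarrow N_i$ and the inclusions $N_i\hookrightarrow L_i^*$, and using that $\R^{n_i}$ is canonically self-dual and equals $\mathcal L_i$, one gets monomorphisms $\mathcal N_i^*\hookrightarrow\mathcal N_j^*$ (where $\mathcal N_i^*:=(\widetilde{N_i})^*$) together with maps $\R^{n_i}\to\mathcal N_i^*\hookrightarrow\R^{n_j}$ whose composites in either order recover the transition maps of the systems $\{\R^{n_i}\}$ and $\{\mathcal N_i^*\}$; hence the two direct systems are interleaved and $\ilim{i}\mathcal N_i^*=\ilim{i}\R^{n_i}=\mathcal M$, which is $(3)$. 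Then $(3)\Rightarrow(2)$ is immediate, once one observes (again via \ref{T3.3}) that an $\R$-module affine subscheme of the quasi-coherent $\mathcal M$ is automatically reflexive.

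The remaining steps are essentially formal. $(2)\Rightarrow(7)$: $\Hom_\R(\R^n,-)$ commutes with filtered direct limits, so $\R^n\to\mathcal M$ factors through one of the submodule schemes whose direct limit is $\mathcal M$. $(7)\Rightarrow(6)$: by the $\Hom$-identity a morphism $\mathcal N^*\to\mathcal M$ factors as $\mathcal N^*\to(\widetilde{N_0})^*\hookrightarrow\R^n\to\mathcal M$ with $N_0$ finitely generated, and the middle map $\R^n\to\mathcal M$ factors through a submodule scheme by $(7)$. $(6)\Rightarrow(4)$: if $\mathcal N^*\to\mathcal M$ factors through $\mathcal W^*\hookrightarrow\mathcal M$ then $\Ker(\mathcal N^*\to\mathcal M)=\Ker(\mathcal N^*\to\mathcal W^*)=(\widetilde{\Coker f})^*$. $(4)\Rightarrow(5)$ is the trivial specialization. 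Finally $(5)\Rightarrow(1)$: having established $\mathcal M=\ilim{i}\R^{n_i}$, exactness of filtered colimits gives $\Ker(\R^{n_i}\to\mathcal M)=\ilim{j}(\widetilde{Q_{ij}})^*$ with $Q_{ij}=L_i^*/\Ima(\phi_{ij}^*)$; by $(5)$ this limit is a reflexive affine scheme $(\widetilde{C_i})^*$, i.e.\ $\Hom_R(C_i,R')=\bigcup_j\{\,g\in\Hom_R(L_i^*,R'):g|_{\Ima(\phi_{ij}^*)}=0\,\}$ for all $R'$; testing this at $R'=L_i^*/\bigcap_j\Ima(\phi_{ij}^*)$ with $g$ the canonical projection forces the chain $\{\Ima(\phi_{ij}^*)\}_j$ to stabilize, so $\{L_i^*\}$ is Mittag--Leffler and $M$ is a flat Mittag--Leffler module.

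I expect the two main obstacles to be the following. First, the bookkeeping in $(1)\Rightarrow(3)$: the cofinal reindexing and especially the interleaving of $\{\mathcal N_i^*\}$ with $\{\R^{n_i}\}$ must be set up carefully enough that replacing each $\R^{n_i}$ by its subscheme $\mathcal N_i^*$ provably does not change the direct limit. Second, the recurring point that a quotient $\R^n/\mathcal C^*$ of an affine space by a reflexive affine subscheme---equivalently, the image of a morphism $\R^n\to\mathcal M$---is again a reflexive affine scheme; this underlies the claim that $(4)$ and $(5)$ imply flatness, and it must be argued through \ref{T3.3} and the reflexivity of $\mathcal M$ rather than by a naive quotient in the functor category, where such a quotient need not a priori be a scheme.
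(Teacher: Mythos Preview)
Your chain $(1)\Rightarrow(3)\Rightarrow(2)\Rightarrow(7)\Rightarrow(6)\Rightarrow(4)\Rightarrow(5)\Rightarrow(1)$ is largely sound and close to the paper's argument, but the link $(2)\Rightarrow(7)$ rests on a false claim. You assert, and use repeatedly, that ``an $\R$-module affine subscheme of the quasi-coherent $\mathcal M$ is automatically reflexive.'' This is not true. The paper itself supplies the counterexample: for $R=\mathbb Z/2\mathbb Z$, the functor $Y^\cdot=(\Spec R[x]/(x^2))^\cdot$, i.e.\ $Y^\cdot(R')=\{r'\in R':r'^2=0\}$, is an $\R$-submodule affine scheme of the quasi-coherent module $\mathcal R$ (closed under addition since $\operatorname{char}R'=2$, and under scalars trivially), yet $(Y^\cdot)^{**}=\mathcal R\neq Y^\cdot$. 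So from $(2)$ you only get that $\mathcal R^n\to\mathcal M$ factors through \emph{some} affine subscheme $X^\cdot$, not through a reflexive one $\mathcal W^*$; the passage $X^\cdot\to X^\cdot_{sch}\to\mathcal M$ via Proposition~\ref{3.6z} does not obviously give a monomorphism $X^\cdot_{sch}\hookrightarrow\mathcal M$, so your loop breaks here. (Incidentally, $(3)\Rightarrow(2)$ is trivially true---reflexive affine subschemes \emph{are} affine subschemes---so the reflexivity observation is both unnecessary there and wrong.)

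The paper closes the loop through $(2)$ by an entirely different route: it first proves the equivalence of $(1),(3),(4),(5),(6),(7)$ (Theorem~\ref{ML}), then derives the product criterion $M\otimes\prod Q_i\hookrightarrow\prod(M\otimes Q_i)$ (Corollary~\ref{CML}), and finally proves $(2)\Rightarrow(1)$ by showing this injectivity directly from $\mathcal M=\varinjlim_j\mathbb N_j$ with $\mathbb N_j\hookrightarrow\mathcal M$ affine, using only Proposition~\ref{L5.112} and the fact that the transition maps are monomorphisms---no reflexivity of the $\mathbb N_j$ is needed. Your remaining implications match the paper's essentially verbatim; your direct argument for $(5)\Rightarrow(1)$, testing the identity $\Hom_R(C_i,R')=\bigcup_j\{g:g|_{\Ima\phi_{ij}^*}=0\}$ at $R'=L_i^*/\bigcap_j\Ima\phi_{ij}^*$, is a pleasant alternative to the paper's $(5)\Rightarrow(3)\Rightarrow(1)$. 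But note that to feed it you need flatness from $(5)$, and your ``short argument with images'' for this is really the paper's $(5)\Rightarrow(3)$ computation (factoring through $\mathcal W^*$ with $W=\Ker(R^n\to C)$ and showing the induced $\mathcal W^*\to\mathcal M$ is a monomorphism by checking $\mathcal H=0$), which you should spell out rather than defer to Proposition~\ref{T3.3}.
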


%Theorem \ref{MLI} (5) can be easily rewritten as Corollary \ref{MLJ} (2), that is, we have the following corollary. 
%
%\begin{corollary} \label{MLJ} Let $M$ be an $R$-module. The following statements are equivalent
%
%\begin{enumerate}
%\item $M$ is a flat Mittag-Leffler module. 
%
%\item For any $R$-module $P$ and any $\sum_{i=1}^n m_i\otimes p_i\in M\otimes_R P$, $\sum_{i=1}^n m_i\otimes p_i=0$ iff
%$\sum_{i=1}^n w(m_i)\cdot p_i=0$, for every $w\in M^*$. \end{enumerate}\end{corollary}

There exists a natural isomorphism $N\otimes_R M\overset{\text{\ref{prop4}}}=\Hom_{\mathcal R}(\mathcal N^*,\mathcal M)$. 
Then, for example, \ref{MLI}.7. can be rewritten as follows: 

$(7')$ For any $x\in R^n\otimes_R M$ there exist a module $W$, a morphism $\phi\colon W\to  R^n$ and $x'\in W\otimes_R M$ satisfying:
\begin{enumerate}
\item[(a)] $(\phi \otimes Id)(x')=x$ (where $\phi\otimes Id\colon W\otimes M\to R^n\otimes M$ is the morphism defined by $(\phi\otimes Id)(w\otimes m):=\phi(w)\otimes m$).

\item[(b)] $(\phi'\otimes Id)(x')\neq 0$, for any non-zero morphism $\phi'\colon W\to W'$.
\end{enumerate} Taking $W=\Ima \phi$ in $(7')$,  we can say that $W$ is a submodule of $R^n$. If $M$ is assumed to be flat, then $(a)$ and $(b)$ are equivalent to $W$ being the smallest submodule of $R^n$ that satisfies $(a)$. Therefore, a flat module $M$ is a flat Mittag-Leffler module iff for any
$x\in R^n\otimes_R M$  there exists the smallest submodule $W\subset R^n$ such that $x\in W\otimes_R M$ (this statement was proved in \cite{RaynaudG}). 

No knowledge on flat Mittag-Leffler modules is required in this paper. We show how several known  properties of flat   Mittag-Leffler modules can be proved as consequences of \ref{MLI}.3.  (see \ref{CML}, \ref{CMLC} and \ref{CMLD})). In a next paper (see \cite{Pedro3}), we shall give some functorial characterizations of flat strict Mittag-Leffler modules (see \cite{Garfinkel} for a definition).

Quasi-coherent modules, $\mathcal M$, and reflexive $\mathcal R$-module affine schemes, $\mathcal N^*$, are the two most widely used  functors  in this paper. We think that the main results of this paper involving only these functors are equally true when $R$ is not commutative.

\section{Preliminaries}

\begin{notation} \label{nota2.1}For simplicity, given a (covariant) functor  $\mathbb X$ (from the category of commutative $R$-algebras to the category of sets),
we shall sometimes use $x \in \mathbb X$  to denote $x \in \mathbb X(R')$. Given $x \in \mathbb X(R')$ and a morphism of commutative $R$-algebras $R' \to R''$, we shall still denote by $x$ its image by the morphism $\mathbb X(R') \to \mathbb X(R'')$.\end{notation}

 Let $\mathbb M$ and $\mathbb M'$ be two $\R$-modules.
 A morphism of $\R$-modules $f\colon \mathbb M\to \mathbb M'$
 is a morphism of functors  such that the  morphism $f_{R'}\colon \mathbb M({R'})\to
 \mathbb M'({R'})$ defined by $f$ is a morphism of ${R'}$-modules, for any commutative $R$-algebra ${R'}$.
 We shall denote by $\Hom_{\R}(\mathbb M,\mathbb M')$ the  family of all the morphisms of $\R$-modules from $\mathbb M$ to $\mathbb M'$.

\begin{remark} Direct limits, inverse limits of $\R$-modules and  kernels, cokernels, images, etc.,  of morphisms of $\R$-modules are regarded in the category of $\R$-modules.\end{remark}

One has
$$\aligned & 
(\Ker f)({R'})=\Ker f_{R'},\, (\Coker f)({R'})=\Coker f_{R'},\, (\Ima f)({R'})=\Ima f_{R'},\\
&  (\ilim{i\in I} \mathbb M_i)({R'})=\ilim{i\in I} (\mathbb M_i({R'})),\,
(\plim{j\in J} \mathbb M_j)({R'})=\plim{j\in J} (\mathbb M_j({R'})),\endaligned $$
(where $I$ is an upward directed set and $J$ a downward directed set).
$\mathbb M\otimes_{\R}\mathbb M'$ is defined by $(\mathbb M\otimes_{\R}\mathbb M')({R'}):=\mathbb M({R'})\otimes_{{R'}}\mathbb M'({R'})$, for any commutative $R$-algebra ${R'}$.

\begin{definition}  Given an $\R$-module $\mathbb M$ and a commutative $R$-algebra ${R'}$,   we shall denote by $\mathbb M_{|{R'}}$  the restriction of $\mathbb M$ to the category of commutative ${R'}$-algebras, i.e.,  $$\mathbb M_{\mid {R'}}(R''):=\mathbb M(R''),$$ for any commutative ${R'}$-algebra $R''$.
\end{definition}

We shall denote by ${\mathbb Hom}_{\R}(\mathbb M,\mathbb M')$\footnote{In this paper, we shall only  consider well-defined functors ${\mathbb Hom}_{\R}(\mathbb M,\mathbb M')$, that is to say, functors such that $\Hom_{\mathcal {R'}}(\mathbb M_{|{R'}},\mathbb {M'}_{|{R'}})$ is a set, for any ${R'}$.} the  $\R$-module defined by $${\mathbb Hom}_{\R}(\mathbb M,\mathbb M')({R'}):={\rm Hom}_{\mathcal {R'}}(\mathbb M_{|{R'}}, \mathbb M'_{|{R'}}).$$  Obviously,
$$(\mathbb Hom_{\R}(\mathbb M,\mathbb M'))_{|{R'}}=
\mathbb Hom_{\mathcal {R'}}(\mathbb M_{|{R'}},\mathbb M'_{|{R'}}).$$

\begin{notation} \label{nota2.2} Let $\mathbb M$ be an $\R$-module. We shall denote $\mathbb M^*=\mathbb Hom_{\R}(\mathbb M,\R)$.\end{notation}

\begin{proposition} \label{trivial} Let $\mathbb M$ and $\mathbb N$ be two  $\R$-modules. Then,
$$\Hom_{\R}(\mathbb M,\mathbb N^*)=\Hom_{\R}(\mathbb N,\mathbb M^*),\, f\mapsto \tilde f,$$
where $\tilde f$ is defined as follows: $\tilde f(n)(m):=f(m)(n)$, for any $m\in\mathbb M$ and $n\in\mathbb N$.

\end{proposition}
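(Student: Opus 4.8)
The plan is to treat this as the usual symmetry of the canonical pairing: unwind both sides into data attached to every commutative $R$-algebra, define $f\mapsto\tilde f$ by the stated formula, and observe that the very same recipe (with $\mathbb M$ and $\mathbb N$ interchanged) furnishes the inverse. It is in essence a Yoneda/adjunction-style bookkeeping argument — morally $\mathbb Hom_{\R}(\mathbb M,\mathbb N^*)$ and $\mathbb Hom_{\R}(\mathbb N,\mathbb M^*)$ are both ways of writing the ``bilinear pairings $\mathbb M\times\mathbb N\to\R$'' — and nothing deep is involved.

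First I would fix $f\in\Hom_{\R}(\mathbb M,\mathbb N^*)$ and check that $\tilde f$, as defined, really lands in $\Hom_{\R}(\mathbb N,\mathbb M^*)$. Let $R'$ be a commutative $R$-algebra and $n\in\mathbb N(R')$. For a commutative $R'$-algebra $R''$ and $m\in\mathbb M(R'')$ put $\tilde f(n)(m):=f(m)(n)$, where $f(m)\in\mathbb N^*(R'')=\Hom_{\mathcal{R''}}(\mathbb N_{|R''},\R_{|R''})$ and $n\in\mathbb N(R'')$ denotes the image of $n$ under $\mathbb N(R')\to\mathbb N(R'')$, following the convention of \ref{nota2.1}; thus $\tilde f(n)(m)\in\R(R'')=R''$. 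I must verify: (i) $m\mapsto\tilde f(n)(m)$ is natural in $R''$ and $R''$-linear, so that $\tilde f(n)\in\mathbb M^*(R')$; this is immediate because each $f(m)$ is a morphism of $\mathcal{R''}$-modules and because $f$, being a morphism of functors, commutes with the transition maps $\mathbb M(R'')\to\mathbb M(R''')$ and $\mathbb N^*(R'')\to\mathbb N^*(R''')$. (ii) $n\mapsto\tilde f(n)$ is natural in $R'$ and $R'$-linear, i.e.\ $\tilde f\colon\mathbb N\to\mathbb M^*$ is a morphism of $\R$-modules; this again drops out of the fact that $f$ is a morphism of $\R$-modules, since addition and scalar multiplication in the target are computed ``pointwise'' through the formula $\tilde f(n)(m)=f(m)(n)$.

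Second, by the evident symmetry the same construction sends $g\in\Hom_{\R}(\mathbb N,\mathbb M^*)$ to $\tilde g\in\Hom_{\R}(\mathbb M,\mathbb N^*)$ with $\tilde g(m)(n)=g(n)(m)$, and a one-line computation gives $\widetilde{\tilde f}(m)(n)=\tilde f(n)(m)=f(m)(n)$, hence $\widetilde{\tilde f}=f$ and likewise $\widetilde{\tilde g}=g$. Therefore $f\mapsto\tilde f$ is a bijection; it is visibly additive in $f$ (and $\R$-linear, if one wishes to record that), so it is the asserted isomorphism. The only point demanding attention is the compatibility with base change hidden in the notation of \ref{nota2.1} — that ``evaluate $f(m)$ at $n$'' does not depend on how far one has pushed $m$ and $n$ along the tower $R\to R'\to R''$ — and this is exactly what the naturality of $f$ guarantees; past that, the argument is purely formal.
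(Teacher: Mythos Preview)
Your proof is correct: the direct verification that $\tilde f$ lands in $\Hom_{\R}(\mathbb N,\mathbb M^*)$ and that the assignment is involutive goes through exactly as you describe, and you are right that the only subtlety is the base-change bookkeeping, which naturality of $f$ handles.

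The paper, however, takes a shorter route. It simply writes
\[
\Hom_{\R}(\mathbb M,\mathbb N^*)=\Hom_{\R}(\mathbb M\otimes_{\R}\mathbb N,\R)=\Hom_{\R}(\mathbb N,\mathbb M^*),
\]
invoking the pointwise tensor--hom adjunction (recall $(\mathbb M\otimes_{\R}\mathbb N)(R')=\mathbb M(R')\otimes_{R'}\mathbb N(R')$, so this is the ordinary adjunction at each $R'$ together with naturality). This is precisely the ``bilinear pairings'' remark you made in passing, elevated to the entire argument. What the paper's approach buys is brevity and a conceptual identification of both sides with a single object; what your approach buys is explicitness---you never need to state or check the functorial tensor--hom adjunction, and the formula $\tilde f(n)(m)=f(m)(n)$ is visibly the one announced in the statement rather than something recovered after the fact.
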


\begin{proof} $\Hom_{\R}(\mathbb M,\mathbb N^*)=\Hom_{\R}(\mathbb M\otimes_{\R}\mathbb N,\R)=\Hom_{\R}(\mathbb N,\mathbb M^*)$.

\end{proof}

\begin{proposition} \cite[1.15]{Amel}  \label{adj2} Let $\mathbb M$ be an $\R$-module, ${R'}$ a commutative $R$-algebra and $N$ an ${R'}$-module.  Then,
$$\Hom_{\mathcal {R'}}(\mathbb M_{|{R'}},\mathcal N)=\Hom_{\R}(\mathbb M,\mathcal N).$$
In particular,
$$\mathbb M^*({R'})=\Hom_{\R}(\mathbb M,\mathcal {R'}).$$
\end{proposition}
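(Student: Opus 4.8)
The plan is to construct mutually inverse maps between the two Hom-sets directly, exploiting the adjunction between base change and restriction of scalars at the level of functors. Write $i$ for the forgetful functor from commutative $R'$-algebras to commutative $R$-algebras; it has a left adjoint $L(S):=R'\otimes_R S$, with unit the $R$-algebra map $\eta_S\colon S\to R'\otimes_R S$, $s\mapsto 1\otimes s$, and, for an $R'$-algebra $R''$, counit the multiplication map $m_{R''}\colon R'\otimes_R R''\to R''$, which is a section of $\eta_{R''}$. The other ingredient is the canonical isomorphism $N\otimes_{R'}(R'\otimes_R S)=N\otimes_R S$, natural in the commutative $R$-algebra $S$; it says that the $\R$-module $\mathcal N$ (with $\mathcal N(S)=N\otimes_R S$, where $N$ is regarded as an $R$-module through $R\to R'$) is obtained by precomposing with $L$ the functor $\mathcal N$ regarded as an $\mathcal R'$-module (with $\mathcal N(R'')=N\otimes_{R'}R''$).

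First I would define $\alpha\colon\Hom_{\R}(\mathbb M,\mathcal N)\to\Hom_{\mathcal R'}(\mathbb M_{|R'},\mathcal N)$: given $f$, restrict it to commutative $R'$-algebras and, for each such $R''$, follow $f_{R''}\colon\mathbb M(R'')\to N\otimes_R R''$ by the natural projection $N\otimes_R R''\to N\otimes_{R'}R''$. Conversely I would define $\beta\colon\Hom_{\mathcal R'}(\mathbb M_{|R'},\mathcal N)\to\Hom_{\R}(\mathbb M,\mathcal N)$ by $$\beta(g)_S\colon\ \mathbb M(S)\xrightarrow{\ \mathbb M(\eta_S)\ }\mathbb M(R'\otimes_R S)\xrightarrow{\ g_{R'\otimes_R S}\ }N\otimes_{R'}(R'\otimes_R S)=N\otimes_R S,$$ for every commutative $R$-algebra $S$. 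Checking that $\alpha(f)$ is a morphism of $\mathcal R'$-modules, and that $\beta(g)$ is natural in $S$ and $S$-linear — using that $g_{R'\otimes_R S}$ is $(R'\otimes_R S)$-linear, hence $S$-linear through $\eta_S$, and that the identification $N\otimes_{R'}(R'\otimes_R S)=N\otimes_R S$ respects the $S$-module structures — is a routine verification.

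Then I would prove that $\alpha$ and $\beta$ are mutually inverse. For $\beta\circ\alpha=\mathrm{id}$: naturality of $f$ along $\eta_S$ rewrites $\beta(\alpha(f))_S$ as $f_S$ followed by the composite $N\otimes_R S\to N\otimes_R(R'\otimes_R S)\to N\otimes_{R'}(R'\otimes_R S)=N\otimes_R S$, and this composite is the identity. For $\alpha\circ\beta=\mathrm{id}$: for an $R'$-algebra $R''$, naturality of $g$ along the counit $m_{R''}$, together with $m_{R''}\circ\eta_{R''}=\mathrm{id}$, exhibits $g_{R''}$ as $\beta(g)_{R''}$ followed by the projection $N\otimes_R R''\to N\otimes_{R'}R''$, which is exactly $\alpha(\beta(g))_{R''}$. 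The ``in particular'' clause is the case $N=R'$: by definition $\mathbb M^*(R')=\Hom_{\mathcal R'}(\mathbb M_{|R'},\R_{|R'})$, and $\R_{|R'}$ is precisely the $\mathcal R'$-module $\mathcal N$ attached to the $R'$-module $N=R'$.

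The main difficulty, such as it is, is pure bookkeeping: one must keep $\otimes_R$ and $\otimes_{R'}$ apart throughout the naturality diagrams, use the identification $N\otimes_{R'}(R'\otimes_R S)=N\otimes_R S$ consistently, and make sure $\alpha$ and $\beta$ preserve the module structure, not merely the underlying functors. One can also dispense with the explicit formulas: since $L$ is left adjoint to $i$, the restriction functor $\mathbb M\mapsto\mathbb M_{|R'}$ is left adjoint to precomposition with $L$, on set-valued and equally on module-valued functors, and the proposition is just the value of this adjunction at $\mathcal N$.
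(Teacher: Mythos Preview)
Your proof is correct. The paper does not give its own proof of this proposition; it simply cites \cite[1.15]{Amel}, so there is no in-paper argument to compare against. Your approach---writing down the explicit unit/counit of the base-change/restriction adjunction $L\dashv i$ and checking the triangle identities on $\mathcal N$---is exactly the standard one, and the bookkeeping (distinguishing $\otimes_R$ from $\otimes_{R'}$, identifying $N\otimes_{R'}(R'\otimes_R S)$ with $N\otimes_R S$) is handled correctly. The final remark, that the adjunction on module-valued functors already delivers the statement without the explicit formulas, is a clean way to summarize the argument.
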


\subsection{Quasi-coherent modules}

\begin{definition} Let $M$ (resp. $N$,  $V$, etc.) be an $R$-module. We shall denote by  ${\mathcal M}$  (resp. $\mathcal N$, $\mathcal V$, etc.)   the $\R$-module defined by ${\mathcal M}({R'}) := M \otimes_R {R'}$ (resp. $\mathcal N({R'}):=N\otimes_R {R'}$,  $\mathcal V({R'}):=V\otimes_R {R'}$, etc.). $\mathcal M$  will be called the quasi-coherent $\R$-module associated with $M$. 
\end{definition}

 ${\mathcal M}_{\mid {R'}}$ is the quasi-coherent $\mathcal {R'}$-module associated with $M \otimes_R
{R'}$.  For any pair of $R$-modules $M$ and $N$, the quasi-coherent module associated with $M\otimes_R N$ is $\mathcal M\otimes_{\R}\mathcal N$.

\begin{proposition} \cite[1.12]{Amel}  The functors
$$\aligned \text{Category of $R$-modules } & \to \text{ Category of quasi-coherent $\R$-modules }\\ M & \mapsto \mathcal M\\ \mathcal M(R) & \leftmapsto \mathcal M\endaligned$$
stablish an equivalence  of categories. In particular,
$${\rm Hom}_{\R} ({\mathcal M},{\mathcal M'}) = {\rm Hom}_R (M,M').$$
\end{proposition}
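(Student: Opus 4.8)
The plan is to show the two displayed functors are mutually quasi-inverse equivalences. Write $\Phi$ for the functor $M\mapsto\mathcal M$ and $\Psi$ for the functor $\mathbb M\mapsto\mathbb M(R)$ on quasi-coherent modules. The composite $\Psi\Phi$ is the identity up to the canonical natural isomorphism $\mathcal M(R)=M\otimes_R R\simeq M$, so the content is: (i) $\Phi$ and $\Psi$ behave correctly on morphisms; (ii) $\Phi\Psi$ is naturally isomorphic to the identity on quasi-coherent modules. The $\Hom$-equality will then be the ``fully faithful'' half, but it is cleanest to prove it by hand first. Throughout, for an $\R$-module $\mathbb M$ and a commutative $R$-algebra $R'$ I write $\rho_{R'}\colon\mathbb M(R)\to\mathbb M(R')$ for the map induced by $R\to R'$; by functoriality of $\mathbb M$ and of the structure morphism $\R\times\mathbb M\to\mathbb M$, each $\rho_{R'}$ is a homomorphism of $R$-modules (where $\mathbb M(R')$ carries the $R$-structure through $R\to R'$), and the family $\{\rho_{R'}\}$ is compatible with morphisms of commutative $R$-algebras.

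For the $\Hom$-equality, send $f\colon\mathcal M\to\mathcal M'$ to $f_R\colon M\to M'$ (using $\mathcal M(R)=M$). Naturality of $f$ on the map $R\to R'$ gives $f_{R'}(m\otimes 1)=f_R(m)\otimes 1$; since $f_{R'}$ is $R'$-linear and the elements $m\otimes 1$ generate $M\otimes_R R'$ over $R'$, necessarily $f_{R'}(m\otimes r')=f_R(m)\otimes r'$ for all $r'$. Hence $f\mapsto f_R$ is injective, and it is surjective since any $g\colon M\to M'$ is the image of the morphism $\mathcal g$ with $(\mathcal g)_{R'}=g\otimes\mathrm{id}_{R'}$. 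This proves $\Hom_\R(\mathcal M,\mathcal M')=\Hom_R(M,M')$, functorially in $M$ and $M'$; in particular $\Phi$ is fully faithful and $\Psi(\mathcal g)=g$.

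For the counit, attach to an arbitrary $\R$-module $\mathbb M$ the morphism $\psi_{\mathbb M}\colon\Phi\Psi(\mathbb M)\to\mathbb M$ defined on $R'$-points by
$$(\psi_{\mathbb M})_{R'}\colon\mathbb M(R)\otimes_R R'\longrightarrow\mathbb M(R'),\qquad m\otimes r'\longmapsto r'\cdot\rho_{R'}(m).$$
That this is well defined on the tensor product, $R'$-linear, natural in $R'$ (hence a morphism of $\R$-modules), and natural in $\mathbb M$, follows formally from the properties of $\rho$ recorded above together with the module axioms for the $R'$-modules $\mathbb M(R')$. When $\mathbb M=\mathcal M$ is quasi-coherent, $\rho_{R'}$ is the map $m\mapsto m\otimes 1$, and $(\psi_{\mathcal M})_{R'}$ is the canonical isomorphism $(M\otimes_R R)\otimes_R R'\simeq M\otimes_R R'$; hence $\psi_{\mathcal M}$ is an isomorphism for every quasi-coherent $\mathbb M$. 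This is the required natural isomorphism $\Phi\Psi\simeq\mathrm{id}$, so, together with the first two paragraphs, $\Phi$ and $\Psi$ are inverse equivalences of categories.

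The only step with any real work is checking that $\psi_{\mathbb M}$ is a morphism of $\R$-modules and natural in both variables, and even there the difficulty is purely bookkeeping with the functoriality of $\mathbb M$ and the module axioms; I expect no genuine obstacle. Conceptually the proposition is a Yoneda-type reconstruction statement: a quasi-coherent functor is recovered from its value at $R$ precisely because being of the form $\mathcal M$ forces $\mathbb M(R')$ to be $\mathbb M(R)\otimes_R R'$.
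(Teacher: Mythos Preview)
Your argument is correct. Note, however, that the paper does not supply its own proof of this proposition: it simply cites \cite[1.12]{Amel}. So there is no paper proof to compare against directly.

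That said, the paper does contain the two ingredients of your argument, stated separately and also cited from \cite{Amel}. Your $\Hom$-computation is the special case $\mathbb M=\mathcal M'$ of Proposition~\ref{tercer}, which asserts the stronger adjunction $\Hom_{\R}(\mathcal M,\mathbb M)=\Hom_R(M,\mathbb M(R))$ for \emph{any} $\R$-module $\mathbb M$; and your counit $\psi_{\mathbb M}$ is exactly the natural morphism $\mathbb M_{qc}\to\mathbb M$ of Proposition~\ref{tercerb}, with the paper observing just after that proposition that $\mathbb M$ is quasi-coherent iff this morphism is an isomorphism. So your route is the standard one and coincides with how the surrounding material in the paper is organized; you have simply assembled the pieces into a self-contained proof rather than deferring to the reference.
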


Let $f_R\colon M\to N$ be a morphism of $R$-modules and $f\colon \mathcal M\to \mathcal N$
the associated morphism of $\mathcal R$-modules. Let $C=\Coker f_R$, then $\Coker f=\mathcal C$, which is a quasi-coherent module.

\begin{proposition} \cite[1.3]{Amel}\label{tercer}
For every  ${\R}$-module $\mathbb M$ and every $R$-module $M$, it is satisfied that
$${\rm Hom}_{\R} ({\mathcal M}, \mathbb M) = {\rm Hom}_R (M, \mathbb M(R)),\, f\mapsto f_R.$$
\end{proposition}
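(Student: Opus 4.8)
The plan is to exhibit an explicit inverse to the map $\Phi\colon f\mapsto f_R$ and to check that the two are mutually inverse. First I would observe that $\Phi$ is well defined: given a morphism of $\R$-modules $f\colon \mathcal M\to\mathbb M$, its component at $R$ is an $R$-linear map $f_R\colon \mathcal M(R)\to\mathbb M(R)$, and since $\mathcal M(R)=M\otimes_R R=M$ canonically, $f_R$ is indeed a morphism $M\to\mathbb M(R)$ of $R$-modules. The map $\Phi$ is visibly additive.

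Next, for each commutative $R$-algebra $R'$ write $j_{R'}\colon\mathbb M(R)\to\mathbb M(R')$ for the structure map induced by $R\to R'$; it is $R$-linear. Given $\phi\in\Hom_R(M,\mathbb M(R))$ I would define a candidate $f=\Psi(\phi)$ by
$$f_{R'}\Big(\sum_i m_i\otimes r_i'\Big):=\sum_i r_i'\cdot j_{R'}(\phi(m_i))\in\mathbb M(R'),$$
the motivation being that $m_i\otimes r_i'=r_i'\cdot(m_i\otimes 1)$ in $\mathcal M(R')$, where $m_i\otimes 1$ is the image of $m_i\in\mathcal M(R)$ under $\mathcal M(R)\to\mathcal M(R')$; thus the formula is forced on us by $R'$-linearity and functoriality. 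I would then verify in turn that $f_{R'}$ is well defined (the map $(m,r')\mapsto r'\cdot j_{R'}(\phi(m))$ is $R$-bilinear and $R$-balanced, hence factors through $M\otimes_R R'$), that it is $R'$-linear, and that the family $\{f_{R'}\}$ is natural in $R'$, so that $f$ is a genuine morphism of $\R$-modules.

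The naturality check is the only step requiring care, and it is where I expect the main (if modest) obstacle to lie. For an $R$-algebra map $g\colon R'\to R''$ one must compare $\mathbb M(g)\circ f_{R'}$ with $f_{R''}\circ\mathcal M(g)$ on a generator $m\otimes r'$. Here the identity $\mathbb M(g)\circ j_{R'}=j_{R''}$, expressing that the structure maps of the functor $\mathbb M$ compose, together with the $R'$-linearity of $\mathbb M(g)$, make both sides equal to $g(r')\cdot j_{R''}(\phi(m))$.

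Finally I would confirm that the two composites are identities. Evaluating $\Psi(\phi)$ at $R'=R$ gives $f_R(m)=1\cdot j_R(\phi(m))=\phi(m)$, since $j_R=\mathrm{Id}$, so $\Phi\circ\Psi=\mathrm{Id}$. For $\Psi\circ\Phi=\mathrm{Id}$ the crucial point is that $\mathcal M(R')$ is generated as an $R'$-module by the image of $\mathcal M(R)=M$; hence any morphism $f$ of $\R$-modules necessarily satisfies $f_{R'}(m\otimes r')=r'\cdot f_{R'}(m\otimes 1)=r'\cdot j_{R'}(f_R(m))$ by $R'$-linearity and naturality, which is precisely the formula defining $\Psi(f_R)$. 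Thus $f=\Psi(f_R)$, so $\Phi$ is a bijection, and being additive it gives the claimed identification $\Hom_{\R}(\mathcal M,\mathbb M)=\Hom_R(M,\mathbb M(R))$.
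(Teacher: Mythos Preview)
Your proof is correct. The paper does not actually prove this proposition; it merely cites it from \cite[1.3]{Amel} and uses it as a black box. Your argument is the standard direct verification one would expect in that reference: build the inverse $\Psi$ by extending $\phi$ along the structure maps $j_{R'}$ and $R'$-linearity, then check naturality and the two inverse identities. All the steps are sound, including the key point that $\mathbb M(g)$ is $R'$-linear (this follows from naturality of the action morphism $\R\times\mathbb M\to\mathbb M$), which is what makes the naturality square commute.
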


\begin{notation} Let $\mathbb M$ be an $\R$-module. 
We shall denote by $\mathbb M_{qc}$ the quasi-coherent module associated with
the $R$-module $\mathbb M(R)$, that is, $$\mathbb M_{qc}({R'}):=\mathbb M(R)\otimes_R{R'}.$$\end{notation}

\begin{proposition} \label{tercerb} For each  $\R$-module $\mathbb M$ one has the natural morphism $$\mathbb M_{qc}\to \mathbb M, \,m\otimes r'\mapsto r'\cdot m,$$ for any $m\otimes r'\in \mathbb M_{qc}({R'})=\mathbb M(R)\otimes_R {R'}$, and a functorial equality 
$$\Hom_{\R}(\mathcal N,\mathbb M_{qc}))=\Hom_{\R}(\mathcal N,\mathbb M),$$
for any quasi-coherent $\R$-module $\mathcal N$.
\end{proposition}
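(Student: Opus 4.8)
The plan is to construct the two pieces explicitly and check they fit together. First I would define the natural transformation $\mathbb M_{qc}\to\mathbb M$: for a commutative $R$-algebra $R'$, an element of $\mathbb M_{qc}(R')$ is a sum $\sum m_k\otimes r'_k$ with $m_k\in\mathbb M(R)$ and $r'_k\in R'$, and I map it to $\sum r'_k\cdot m_k\in\mathbb M(R')$, using that $\mathbb M(R)\to\mathbb M(R')$ is the map induced by $R\to R'$ (this is well defined because the target is an $R'$-module, so the relations defining the tensor product are respected) and that the $\R$-module structure lets us multiply by $r'_k\in R'$. Naturality in $R'$ is immediate from functoriality of $\mathbb M$. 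One checks this is a morphism of $\R$-modules directly from the definitions.

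Next I would establish the functorial equality. Given a quasi-coherent $\R$-module $\mathcal N$, composition with $\mathbb M_{qc}\to\mathbb M$ gives a map $\Hom_{\R}(\mathcal N,\mathbb M_{qc})\to\Hom_{\R}(\mathcal N,\mathbb M)$, and I must show it is a bijection. The key observation is that both sides are controlled by what happens over $R$ itself: by Proposition \ref{tercer}, $\Hom_{\R}(\mathcal N,\mathbb M)=\Hom_R(N,\mathbb M(R))$ via $f\mapsto f_R$, and likewise $\Hom_{\R}(\mathcal N,\mathbb M_{qc})=\Hom_R(N,\mathbb M_{qc}(R))=\Hom_R(N,\mathbb M(R))$, since $\mathbb M_{qc}(R)=\mathbb M(R)\otimes_R R=\mathbb M(R)$. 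So both Hom-groups are canonically identified with $\Hom_R(N,\mathbb M(R))$, and it only remains to verify that the composition map above is compatible with these identifications — i.e. that the morphism $\mathbb M_{qc}\to\mathbb M$ induces the identity on sections over $R$. This is clear: over $R$ the map sends $m\otimes r\mapsto r\cdot m$, which under $\mathbb M(R)\otimes_R R=\mathbb M(R)$ is the identity.

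I expect the only mild subtlety to be bookkeeping: making sure that the identification $\Hom_{\R}(\mathcal N,\mathbb M_{qc})=\Hom_R(N,\mathbb M(R))$ really is the one given by $g\mapsto g_R$ composed with $\mathbb M_{qc}(R)=\mathbb M(R)$, and that this is strictly compatible with the analogous identification for $\mathbb M$, so that the triangle commutes on the nose rather than just up to a noncanonical iso. Once that compatibility is spelled out, bijectivity is a formal consequence of Proposition \ref{tercer} applied twice, so there is no real obstacle here — the statement is essentially the assertion that $\mathbb M\mapsto\mathbb M_{qc}$ is right adjoint to the inclusion of quasi-coherent $\R$-modules into all $\R$-modules, which \ref{tercer} already packages for us.
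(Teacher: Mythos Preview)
Your proposal is correct and follows essentially the same route as the paper: both arguments reduce to applying Proposition~\ref{tercer} twice, once to $\mathbb M$ and once to $\mathbb M_{qc}$, using $\mathbb M_{qc}(R)=\mathbb M(R)$. You are simply more explicit than the paper in checking that the resulting bijection is the one induced by composition with the natural map $\mathbb M_{qc}\to\mathbb M$, which is a worthwhile verification.
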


\begin{proof} Observe that $\Hom_{\R}(\mathcal N,\mathbb M)\overset{\text{\ref{tercer}}}=\Hom_R(N,\mathbb M(R))\overset{\text{\ref{tercer}}}=\Hom_{\R}(\mathcal N,\mathbb M_{qc})$.

\end{proof}

Obviously, an $\mathcal R$-module $\mathbb M$ is a quasi-coherent module iff
the natural morphism $\mathbb M_{qc}\to \mathbb M$ is an isomorphism.

\begin{theorem}  \cite[1.8]{Amel}\label{prop4} 
Let $M$ and $M'$ be $R$-modules. Then, $${\mathcal M} \otimes_{\R} {\mathcal M'}={\mathbb Hom}_{\R} ({\mathcal M^*}, {\mathcal M'}),\, m\otimes m'\mapsto \tilde{m\otimes m'},$$
where $ \tilde{m\otimes m'}(w):=w(m)\cdot m'$, for any $w\in \mathcal M^*$.
\end{theorem}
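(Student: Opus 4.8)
The plan is to reduce the asserted isomorphism of $\mathcal R$-modules to a bijection over the base ring and then to compute the right-hand side by Yoneda's lemma. First I would note that both functors commute with restriction of scalars: $(\mathcal M\otimes_{\mathcal R}\mathcal M')_{|R'}$ is the quasi-coherent $\mathcal{R'}$-module associated with $(M\otimes_R M')\otimes_R R'=(M\otimes_R R')\otimes_{R'}(M'\otimes_R R')$, whereas $\mathbb{Hom}_{\mathcal R}(\mathcal M^*,\mathcal M')_{|R'}=\mathbb{Hom}_{\mathcal{R'}}\bigl((\mathcal M_{|R'})^*,\mathcal M'_{|R'}\bigr)$, since $\mathcal M^*_{|R'}=(\mathcal M_{|R'})^*$ and $\mathcal M_{|R'}$, $\mathcal M'_{|R'}$ are the quasi-coherent $\mathcal{R'}$-modules associated with $M\otimes_R R'$ and $M'\otimes_R R'$. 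Hence the morphism in the statement will be an isomorphism once I show that, for every commutative ring $R$ and all $R$-modules $M,M'$, the $R$-linear map
$$\phi\colon M\otimes_R M'\to\Hom_{\mathcal R}(\mathcal M^*,\mathcal M'),\qquad m\otimes m'\mapsto\bigl(w\mapsto w(m)\cdot m'\bigr),$$
is bijective; applying this with $(R,M,M')$ replaced by $(R',M\otimes_R R',M'\otimes_R R')$ then yields bijectivity at the level of $R'$. Well-definedness of $\phi$ is a bilinearity check.

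Next I would use, as recalled in the introduction, that $\mathcal M^*$ is the functor of points of $\Spec S^\cdot_R M$: for any $R$-algebra $R'$ one has $\mathcal M^*(R')=\Hom_{R'}(M\otimes_R R',R')=\Hom_R(M,R')=\Hom_{R\text{-alg}}(S^\cdot_R M,R')$, and under this identification the two $\mathcal R$-module operations on $\mathcal M^*$ — pointwise addition and pointwise scaling of linear forms — are dual, respectively, to the comultiplication $S^\cdot_R M\to S^\cdot_R M\otimes_R S^\cdot_R M$ determined by $m\mapsto m\otimes1+1\otimes m$ and to the homogeneity coaction $S^\cdot_R M\to S^\cdot_R M[t]$ that scales $S^n_R M$ by $t^n$. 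By Yoneda's lemma,
$$\Hom_{\mathrm{funct}}(\mathcal M^*,\mathcal M')=\mathcal M'(S^\cdot_R M)=\bigoplus_{n\ge0}M'\otimes_R S^n_R M,$$
where $\xi=\sum_n\xi_n$ corresponds to the morphism $F_\xi$ sending a linear form $w$ — extended to the $R$-algebra homomorphism $\bar w\colon S^\cdot_R M\to R'$ — to $(\mathrm{id}_{M'}\otimes\bar w)(\xi)$.

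Then I would single out the $\xi$ for which $F_\xi$ is a morphism of $\mathcal R$-modules. Since $\overline{tw}$ restricts to $t^n\bar w$ on $S^n_R M$, the requirement $F_\xi(tw)=t\,F_\xi(w)$ for all $R'$, all $t\in R'$, all $w$ becomes $\sum_n(t^n-t)(\mathrm{id}_{M'}\otimes\bar w)(\xi_n)=0$; testing this with $R'=S^\cdot_R M\otimes_R R[t]$ and $w$ the tautological form, and comparing coefficients of the powers of $t$, forces $\xi_n=0$ for every $n\ne1$. Conversely, every $\xi=\xi_1\in M'\otimes_R S^1_R M=M'\otimes_R M$ does give a morphism of $\mathcal R$-modules: compatibility with scaling is the identity $t^1-t=0$, and additivity holds because evaluation of linear forms is additive (equivalently, degree-$1$ elements of $S^\cdot_R M$ are primitive). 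Hence $\Hom_{\mathcal R}(\mathcal M^*,\mathcal M')=M'\otimes_R M$, and tracing through the identification — together with the transposition $M\otimes_R M'\cong M'\otimes_R M$ — shows the resulting bijection coincides with $\phi$.

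The hard part will be the step that discards the homogeneous components of degree $\ne1$: additivity alone does not suffice for this (for instance $w\mapsto w(m)^p$ is an additive morphism over any $\mathbb F_p$-algebra), so it is essential to use the full $\mathcal R$-linearity and to detect the grading by means of a generic scalar, which is why the auxiliary polynomial ring $R[t]$ is introduced. The remaining ingredients — Yoneda's lemma, the identification $\mathcal M^*=(\Spec S^\cdot_R M)^\cdot$, and compatibility with base change — are formal.
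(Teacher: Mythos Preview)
The paper does not prove this theorem itself; it merely quotes it from \cite[1.8]{Amel}, so there is no in-paper argument to compare your proof against. Your argument is correct and self-contained. The reduction to the base ring via compatibility of both sides with restriction of scalars is valid; the Yoneda identification $\Hom_{\mathrm{funct}}(\mathcal M^*,\mathcal M')=\mathcal M'(S^\cdot_R M)=M'\otimes_R S^\cdot_R M$ is exactly the right starting point; and your ``generic scalar'' test over $S^\cdot_R M\otimes_R R[t]$ with the tautological linear form cleanly shows that $\mathcal R$-linearity of $F_\xi$ forces $\xi_n=0$ for every $n\ne 1$. Your remark that additivity alone would not suffice (the Frobenius example) is also to the point.

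One cosmetic tightening: in the converse direction, the phrase ``compatibility with scaling is the identity $t^1-t=0$'' alludes to the universal test ring, but what you actually need is the statement for \emph{arbitrary} $R'$, $t\in R'$, and $w\in\mathcal M^*(R')$. This is immediate once one notes that $\overline{tw}$ restricted to $S^1_R M=M$ is $m\mapsto t\,w(m)$, so that $(\mathrm{id}_{M'}\otimes\overline{tw})(\xi_1)=t\,(\mathrm{id}_{M'}\otimes\bar w)(\xi_1)$ for any $\xi_1\in M'\otimes_R M$. Your additivity check via primitivity of degree-one elements is likewise correct.
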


%\begin{proof} ${\Hom}_{\R} ({\mathcal M^*}, {\mathcal M'}) \overset{\text{\ref{L5.111}}}= \overline{\mathcal M'}(M)={M} \otimes_{R} {M'}.$
%
%
%\end{proof}

If we make $\mathcal M'=\R$ in the previous theorem, we obtain the following theorem.

\begin{theorem} \cite[II,\textsection 1,2.5]{gabriel}  \cite[1.10]{Amel}\label{reflex}
Let $M$ be an $R$-module. Then, $${\mathcal M}={\mathcal M^{**}}.$$
\end{theorem}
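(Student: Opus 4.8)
The plan is to read off the statement from Theorem \ref{prop4} by specializing the second variable to $\R$. Putting $\mathcal M'=\R$ in the natural identification $\mathcal M\otimes_{\R}\mathcal M'={\mathbb Hom}_{\R}(\mathcal M^*,\mathcal M')$ gives, using Notation \ref{nota2.2},
$$\mathcal M\otimes_{\R}\R={\mathbb Hom}_{\R}(\mathcal M^*,\R)=(\mathcal M^*)^*=\mathcal M^{**}.$$
So the only thing left to do is to identify the left-hand side with $\mathcal M$ itself, and to check that under this chain of identifications the resulting isomorphism $\mathcal M\to\mathcal M^{**}$ is the canonical double-duality morphism.

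For the first point I would simply evaluate on an arbitrary commutative $R$-algebra $R'$: by the definition of $\otimes_{\R}$ one has
$$(\mathcal M\otimes_{\R}\R)(R')=\mathcal M(R')\otimes_{R'}\R(R')=(M\otimes_RR')\otimes_{R'}R'=M\otimes_RR'=\mathcal M(R'),$$
and this identification is visibly functorial in $R'$, hence $\mathcal M\otimes_{\R}\R=\mathcal M$ as $\R$-modules. Composing, $\mathcal M=\mathcal M^{**}$.

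For the canonicity I would unwind the explicit map of Theorem \ref{prop4}, which sends $m\otimes m'\mapsto\tilde{m\otimes m'}$ with $\tilde{m\otimes m'}(w)=w(m)\cdot m'$. Taking $m'=1\in R=\R(R)$ and tracing through $\mathcal M=\mathcal M\otimes_{\R}\R$, an element $m\in\mathcal M$ is carried to the morphism $\mathcal M^*\to\R$, $w\mapsto w(m)$, i.e. to evaluation at $m$; this is exactly the canonical morphism $\mathcal M\to\mathcal M^{**}$ alluded to before the statement.

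I do not expect any real obstacle here: all the substance is packaged in Theorem \ref{prop4}, and what remains is the routine canonical identification $\mathcal M\otimes_{\R}\R\cong\mathcal M$ together with a one-line formula check. The only point where genuine care is needed — and where a naive direct argument would fail — is that $\mathcal M^*$ is \emph{not} a quasi-coherent $\R$-module in general (one does not have $\mathcal M^*(R')=M^*\otimes_RR'$), so $\mathcal M^{**}$ cannot be computed by applying the duality of $R$-modules pointwise; invoking Theorem \ref{prop4} is precisely what circumvents this.
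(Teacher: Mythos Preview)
Your proof is correct and is exactly the paper's approach: the paper derives Theorem~\ref{reflex} simply by taking $\mathcal M'=\R$ in Theorem~\ref{prop4}. Your additional verification that $\mathcal M\otimes_{\R}\R=\mathcal M$ and that the resulting map is the canonical evaluation morphism just makes explicit what the paper leaves implicit.
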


\begin{definition} Let $\mathbb M$ be  an $\R$-module. We shall say that
$\mathbb M^*$ is a dual functor.
We shall say that  an $\R$-module  ${\mathbb M}$ is reflexive if ${\mathbb M}={\mathbb M}^{**}$.\end{definition}

\begin{example}  Quasi-coherent modules are reflexive.\end{example}

\subsection{$\mathcal R$-module schemes} For each $R$-scheme $X$ we shall denote by $X^\cdot$ the functor of sets defined by 
$$X^\cdot({R'}):=\Hom_{R-sch}(\Spec {R'},X),$$
and it will be called functor of points of $X$.

\begin{example} Let us denote $\mathbb A^1_R=\Spec R[x]$ the affine line over $\Spec R$. Then,
$$(\mathbb A^1_R)^\cdot ({R'})={R'}.$$
In other words, $(\mathbb A^1_R)^\cdot=\mathcal R$.\end{example}

\begin{definition} A module $R$-scheme is a commutative group $R$-scheme $X$ endowed with a morphism of $R$-schemes
$$\mathbb A^1_R\times_R X\to X$$
satisfying the module axioms. Morphisms of module $R$-schemes are defined in the obviuous way (homorphisms of group $R$-schemes which are compatible with the action of $\mathbb A^1_R)$. Thus we obtain the category of module $R$-schemes.

\end{definition}

\begin{remark} An $R$-scheme $X$ is a $R$-module scheme iff $X^\cdot$ is an $\mathcal R$-module. Thus we have a functor
$$\aligned \text{Module $R$-schemes} & \to \text{ $\mathcal R$-modules}\\
X & \mapsto X^\cdot\endaligned$$
which is fully faithful.

\end{remark}

\begin{definition} We say that an $\mathcal R$-module $\mathbb M$ is an $\mathcal R$-module scheme if $\mathbb M\simeq X^\cdot$ for some $R$-module scheme $X$.\end{definition}

\begin{remark} Let $M$ be an $R$-module and $S^\cdot_RM$ its symmetric algebra. 
Then $\Spec S^\cdot_RM$ is an $R$-module scheme, since
$$(\Spec S^\cdot_RM)^\cdot =\mathcal M^*.$$
\end{remark}

\begin{definition} Let $M$ be an $R$-module. $\mathcal M^*$  will be called the $\R$-module scheme associated with $M$.
\end{definition}

\begin{definition} Let $\mathbb N$ be an $\R$-module. 
We shall denote by $\mathbb N_{sch}$ the $\R$-module scheme defined by $$\mathbb N_{sch}:=((\mathbb N^*)_{qc})^*.$$\end{definition}

\begin{proposition} \label{1211b} Let $\mathbb N$ be an $\R$-module. Then,
\begin{enumerate}
\item  $\mathbb N_{sch}({R'})=\Hom_R(\mathbb N^*(R),{R'})$.

\item $
\Hom_{\R}(\mathbb N_{sch},\mathcal M)=\mathbb N^*(R)\otimes_R M$,
for any quasi-coherent module $\mathcal M$.

\end{enumerate}

\end{proposition}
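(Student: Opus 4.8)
The plan is to reduce both assertions to a statement about the dual of a single, explicit quasi-coherent module. Set $P:=\mathbb N^*(R)$. By the definition of the functor $(\cdot)_{qc}$ we have $(\mathbb N^*)_{qc}=\mathcal P$, the quasi-coherent $\R$-module associated with $P$, and hence $\mathbb N_{sch}=((\mathbb N^*)_{qc})^*=\mathcal P^*$. So everything becomes a computation with $\mathcal P^*$, the $\R$-module scheme associated with the ordinary $R$-module $P$, and no functor more delicate than the dual of a quasi-coherent module appears.

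For part (1), I would simply unwind the definition of the dual $\R$-module: $\mathcal P^*({R'})=\Hom_{R'}(P\otimes_R {R'},{R'})$. Then the standard adjunction between the extension of scalars $-\otimes_R{R'}$ and the restriction of scalars along $R\to {R'}$ rewrites this as $\Hom_R(P,{R'})$, with ${R'}$ on the right regarded as an $R$-module. This gives $\mathbb N_{sch}({R'})=\Hom_R(\mathbb N^*(R),{R'})$, as desired. This step is purely formal.

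For part (2), the key input is Theorem \ref{prop4} applied to the $R$-modules $P$ and $M$ (legitimate since $\mathcal M$ is assumed quasi-coherent), which yields the equality of $\R$-modules $\mathcal P\otimes_{\R}\mathcal M=\mathbb Hom_{\R}(\mathcal P^*,\mathcal M)$. I would then evaluate both sides on the algebra $R$ itself: the left side gives $(\mathcal P\otimes_{\R}\mathcal M)(R)=P\otimes_R M$ by the definition of the tensor product of $\R$-modules, while the right side gives $\mathbb Hom_{\R}(\mathcal P^*,\mathcal M)(R)=\Hom_{\mathcal R}((\mathcal P^*)_{|R},\mathcal M_{|R})=\Hom_{\R}(\mathcal P^*,\mathcal M)$, because restricting an $\R$-module to the category of $R$-algebras is the identity. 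Combining these, $\Hom_{\R}(\mathbb N_{sch},\mathcal M)=\Hom_{\R}(\mathcal P^*,\mathcal M)=P\otimes_R M=\mathbb N^*(R)\otimes_R M$. (Alternatively one could reach the same conclusion via Proposition \ref{trivial} and the reflexivity of $\mathcal M$ and $\mathcal P$ from Theorem \ref{reflex}, rewriting $\Hom_{\R}(\mathcal P^*,\mathcal M)=\Hom_{\R}(\mathcal M^*,\mathcal P)$ and evaluating $\mathbb Hom_{\R}(\mathcal M^*,\mathcal P)=\mathcal M\otimes_{\R}\mathcal P$ at $R$; but the first route is shorter.)

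I do not expect a genuine obstacle here: the statement is essentially a reformulation of Theorem \ref{prop4}. The only points requiring care are bookkeeping — correctly reading off $(\mathbb N^*)_{qc}=\mathcal P$ from the notation, getting the variance of the dual right, and invoking the innocuous identity $\mathbb Hom_{\R}(-,-)(R)=\Hom_{\R}(-,-)$ — together with recalling that part (2) uses the hypothesis that $\mathcal M$ is quasi-coherent, which is exactly what makes Theorem \ref{prop4} applicable.
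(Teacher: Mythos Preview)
Your proposal is correct and follows essentially the same route as the paper: setting $P=\mathbb N^*(R)$ so that $\mathbb N_{sch}=\mathcal P^*$, unwinding the definition of the dual for part (1), and invoking Theorem \ref{prop4} (evaluated at $R$) for part (2). The paper's proof is just a terser version of exactly these two steps.
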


\begin{proof} 1. $\mathbb N_{sch}({R'})=\Hom_{\R}((\mathbb N^*)_{qc},\mathcal {R'})=\Hom_R(\mathbb N^*(R),{R'})$.

2. $\Hom_{\R}(\mathbb N_{sch},\mathcal M)\overset{\text{\ref{prop4} }}=(\mathbb N^*)_{qc}(R)\otimes_RM=\mathbb N^*(R)\otimes_R M$.

\end{proof}

The natural morphism $(\mathbb N^*)_{qc} \to \mathbb N^*$ corresponds by Proposition \ref{trivial} with a morphism $$\mathbb N\to \mathbb N_{sch}.$$ Specifically, one has the natural morphism
$$\begin{array}{lll} \mathbb N({R'}) & \to & \Hom_{R}(\mathbb N^*(R),{R'})=\mathbb N_{sch}({R'})\\ n & \mapsto & \tilde n,\text{ where } \tilde n(w):=w_{R'}(n)\end{array}$$

\begin{proposition} \label{1211} Let $\mathbb N$ be  an $\R$-module and $M$ an $R$-module. Then, the natural morphism
 $$\Hom_{\R}(\mathbb N,\mathcal M^*)\to
\Hom_{\R}(\mathbb N_{sch},\mathcal M^*),$$
is an isomorphism.

\end{proposition}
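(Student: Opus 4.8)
The plan is to identify both sides with the single $R$-module $\Hom_R(M,\mathbb N^*(R))$ by a chain of the canonical isomorphisms already established, and then to check that this identification is induced by the morphism $\mathbb N\to\mathbb N_{sch}$ constructed just before the statement; once both are done, the asserted map is an isomorphism.

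For the source, Proposition \ref{trivial} gives $\Hom_{\R}(\mathbb N,\mathcal M^*)=\Hom_{\R}(\mathcal M,\mathbb N^*)$, and since $\mathcal M$ is quasi-coherent, Proposition \ref{tercer} identifies the right-hand side with $\Hom_R(M,\mathbb N^*(R))$. For the target, applying Proposition \ref{trivial} to $\mathbb N_{sch}=((\mathbb N^*)_{qc})^*$ gives $\Hom_{\R}(\mathbb N_{sch},\mathcal M^*)=\Hom_{\R}(\mathcal M,((\mathbb N^*)_{qc})^{**})$; but $(\mathbb N^*)_{qc}$ is a quasi-coherent module, hence reflexive by Theorem \ref{reflex}, so $((\mathbb N^*)_{qc})^{**}=(\mathbb N^*)_{qc}$, and Proposition \ref{tercerb} — or equivalently Proposition \ref{tercer} together with $(\mathbb N^*)_{qc}(R)=\mathbb N^*(R)$ — identifies $\Hom_{\R}(\mathcal M,(\mathbb N^*)_{qc})$ with $\Hom_R(M,\mathbb N^*(R))$ as well. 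Thus both Hom-sets are canonically $\Hom_R(M,\mathbb N^*(R))$.

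It then remains to see that the resulting bijection is the natural morphism of the statement. Tracing the construction, a morphism $f\colon\mathbb N\to\mathcal M^*$ corresponds by Proposition \ref{trivial} to $\tilde f\colon\mathcal M\to\mathbb N^*$, which by Proposition \ref{tercerb} is the composite of a unique morphism $g\colon\mathcal M\to(\mathbb N^*)_{qc}$ with the canonical $(\mathbb N^*)_{qc}\to\mathbb N^*$; dualizing $g$ and using $((\mathbb N^*)_{qc})^*=\mathbb N_{sch}$ yields a morphism $g^*\colon\mathbb N_{sch}\to\mathcal M^*$, and since by definition the morphism $\mathbb N\to\mathbb N_{sch}$ corresponds under Proposition \ref{trivial} to $(\mathbb N^*)_{qc}\to\mathbb N^*$, the composite of $\mathbb N\to\mathbb N_{sch}$ with $g^*$ is again $f$. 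I expect this last step — following one element through the four canonical isomorphisms and matching the result with the explicit morphism $\mathbb N\to\mathbb N_{sch}$ — to be the only part requiring genuine care; everything else is formal, resting on the reflexivity of quasi-coherent modules and the adjunction-type identities \ref{trivial}, \ref{tercer} and \ref{tercerb}.
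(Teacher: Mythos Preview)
Your argument is correct and follows essentially the same route as the paper: apply Proposition~\ref{trivial} to swap into $\Hom_{\R}(\mathcal M,\mathbb N^*)$, use Proposition~\ref{tercerb} to replace $\mathbb N^*$ by $(\mathbb N^*)_{qc}$, and then apply Proposition~\ref{trivial} once more (together with reflexivity of the quasi-coherent module $(\mathbb N^*)_{qc}$) to reach $\Hom_{\R}(\mathbb N_{sch},\mathcal M^*)$. The paper writes this as a single three-step chain without the detour through $\Hom_R(M,\mathbb N^*(R))$ and without the explicit naturality check, but the content is the same; your added verification that the bijection is the one induced by $\mathbb N\to\mathbb N_{sch}$ is a welcome bit of care that the paper leaves implicit.
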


\begin{proof} $\Hom _{\R}(\mathbb N, \!\mathcal M^*) \! \overset{\text{\ref{trivial}}} =\!
\Hom_{\R}(\mathcal M,\!\mathbb N^*)\! \overset{\text{\ref{tercerb}}}=\!\Hom_{\R}(\mathcal M,\! (\mathbb N^*)_{qc})\!{\overset{\text{\ref{trivial}}}=
\Hom_{\R}(\mathbb N_{sch},\mathcal M^*)}.$

\end{proof}

%\begin{proposition}  \label{2.16} $\overline{\ilim{i}\mathbb F_i}=\ilim{i}\bar{\mathbb F}_i$, and 
%if $\mathbb F_i$ satisfies the properties $(1,2,3)$, for every  $i$,  then $\ilim{i}\mathbb F_i$    satisfies the properties $(1,2,3)$.
%
%\end{proposition}
%
%
%
%\begin{proposition} \label{L5.11}  Let $\{\mathbb F_i\}$ be a direct system of  $\R$-modules which satisfy the properties (1,2,3). Then,
%$$\Hom_{\R}(\mathcal N^*,\ilim{i} \mathbb F_i)=
%\ilim{i}\Hom_{\R}(\mathcal N^*, \mathbb F_i).$$
%\end{proposition}
%
%\begin{proof}  By Theorem \ref{L5.111},
%$$\Hom_{\R}(\mathcal N^*,\ilim{i} \mathbb F_i)=\overline{\ilim{i} \mathbb F_i}(N)=
%\ilim{i} \bar{\mathbb F}_i(N)=
%\ilim{i}\Hom_{\R}(\mathcal N^*, \mathbb F_i).$$
%\end{proof}

\subsection{From the category of algebras to the category of modules}

\begin{definition} \label{notations}
Given an $R$-module $N$,  let $R\oplus N$ be the  commutative $R$-algebra defined by $$(r,n)\cdot (r',n'):=(rr',rn'+r'n),\,\, \forall\, r,r'\in R,\text{ and } \forall\, n,n'\in N.$$  Consider the morphism of $R$-algebras $\pi_1\colon R\oplus N\to R$,  $\pi_1(r,n):=r$, which has the obvious section of $R$-algebras $R\to R\oplus N$, $r\mapsto (r,0)$.

Given  an $\R$-module  $\mathbb F$, let $\bar{\mathbb F}$ be the functor from the category of $R$-modules to the  category of $R$-modules  defined by $\bar{\mathbb F}(N):=\Ker
[\mathbb F(\pi_1)\colon \mathbb  F(R\oplus N)\to \mathbb  F(R)]$.  
Observe that given a morphism of $R$-modules $w\colon N\to N'$, we have the morphism of $R$-algebras $\tilde w\colon R\oplus N\to R\oplus N'$, $(r,n)\mapsto (r,w(n))$, and then the morphism $\bar{\mathbb F}(w)\colon \bar{\mathbb F}(N)\to \bar{\mathbb F}(N')$, $\bar{\mathbb F}(w)(f):=\mathbb F(\tilde w)(f)$, for  any $f\in\bar{\mathbb F}(N)\subset \mathbb F(R\oplus N)$.\end{definition}

We have a canonical isomorphism $\mathbb F(R\oplus N)=\mathbb F(R)\oplus \bar{\mathbb F}(N)$. Given a morphism of functor of $\R$-modules
$\phi\colon \mathbb F_1\to \mathbb F_2$ we have the morphism $$\bar \phi\colon \bar{\mathbb F}_1\to \bar{\mathbb F}_2, \,\bar\phi_N(f):=\phi_{R\oplus N}(f),$$ for any $R$-module $N$ and $f\in \bar{\mathbb F}_1(N)\subset {\mathbb F}_1(R\oplus N)$.

\begin{examples} \label{1.30}  $\overline{\mathcal M^*}(N)=\Hom_R(M,N)$ and $\bar{\mathcal M}(N)=M\otimes_RN$.

\end{examples}

\begin{proposition}  \label{2.16} $\overline{\ilim{i}\mathbb F_i}=\ilim{i}\bar{\mathbb F}_i$.
\end{proposition}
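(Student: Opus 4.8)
The plan is to verify the claimed equality $\overline{\ilim{i}\mathbb F_i}=\ilim{i}\bar{\mathbb F}_i$ functorially, evaluating both sides on an arbitrary $R$-module $N$ and checking that the resulting identification is natural in $N$. The whole argument rests on two facts already available: first, the canonical splitting $\mathbb F(R\oplus N)=\mathbb F(R)\oplus\bar{\mathbb F}(N)$ recorded right after Definition \ref{notations}, which identifies $\bar{\mathbb F}(N)$ with $\Ker[\mathbb F(\pi_1)\colon\mathbb F(R\oplus N)\to\mathbb F(R)]$; and second, the fact recorded in the Preliminaries that direct limits of $\R$-modules are computed pointwise, i.e. $(\ilim{i}\mathbb F_i)(R')=\ilim{i}(\mathbb F_i(R'))$ for every commutative $R$-algebra $R'$ (here $I$ is an upward directed set, so these are filtered colimits).

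First I would evaluate the left-hand side on $N$. By definition,
$$\Bigl(\overline{\ilim{i}\mathbb F_i}\Bigr)(N)=\Ker\Bigl[(\ilim{i}\mathbb F_i)(R\oplus N)\xrightarrow{\ (\ilim{i}\mathbb F_i)(\pi_1)\ }(\ilim{i}\mathbb F_i)(R)\Bigr].$$
Applying the pointwise formula for direct limits to both $R\oplus N$ and $R$, this becomes $\Ker[\ilim{i}\mathbb F_i(R\oplus N)\to\ilim{i}\mathbb F_i(R)]$, where the map is the colimit of the maps $\mathbb F_i(\pi_1)$. Now I would invoke exactness of filtered colimits of abelian groups: the functor $\ilim{I}$ over an upward directed index set is exact, hence commutes with kernels. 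Therefore the last expression equals $\ilim{i}\Ker[\mathbb F_i(\pi_1)\colon\mathbb F_i(R\oplus N)\to\mathbb F_i(R)]=\ilim{i}\bar{\mathbb F}_i(N)=(\ilim{i}\bar{\mathbb F}_i)(N)$, the final equality again being the pointwise description of the colimit in the category of functors to $R$-modules. This chain of identifications gives the asserted equality on objects.

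The remaining step, which is routine but should be spelled out, is naturality in $N$: given a morphism of $R$-modules $w\colon N\to N'$, the induced $R$-algebra map $\tilde w\colon R\oplus N\to R\oplus N'$ is compatible with the projections $\pi_1$, so $\bar{\mathbb F}_i(w)$ is just the restriction of $\mathbb F_i(\tilde w)$ to the kernels, and all the identifications above (pointwise colimit, splitting, commutation of filtered colimit with kernels) are themselves natural, so they assemble into an isomorphism of functors $\overline{\ilim{i}\mathbb F_i}\simeq\ilim{i}\bar{\mathbb F}_i$ on the category of $R$-modules. I do not anticipate a real obstacle here: the only point requiring a moment's care is the exactness of filtered colimits used to pull the kernel past the limit, and the fact that the splitting $\mathbb F(R\oplus N)=\mathbb F(R)\oplus\bar{\mathbb F}(N)$ is itself compatible with the transition morphisms of the directed system, so that taking the colimit of the splittings yields the splitting for $\ilim{i}\mathbb F_i$; both are standard, so the proof is essentially a matter of chaining the two displayed Preliminaries facts together.
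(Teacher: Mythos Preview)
Your proof is correct. The paper itself states Proposition \ref{2.16} without proof, treating it as immediate from the definitions; your argument supplies exactly the details one would expect, namely the pointwise computation of direct limits together with exactness of filtered colimits in abelian groups (or $R$-modules), which allows the kernel defining $\bar{\mathbb F}(N)$ to be pulled past $\ilim{i}$.
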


\section{Reflexive module schemes}

Recall Definition \ref{notations}.

\begin{proposition} \label{nuevo} Let $X=\Spec A$ be an $R$-module affine scheme. Then,
$$\overline{X^\cdot}(N)=\Der_R(A,N),$$
for any $R$-module $N$.
\end{proposition}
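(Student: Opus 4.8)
The plan is to unwind both sides of the claimed equality directly from the definitions and reduce everything to the standard fact that derivations into $N$ are classified by ring maps into $R\oplus N$ (the algebra of dual numbers with values in $N$). Write $X=\Spec A$, so $X^\cdot(R')=\Hom_{R\text{-}alg}(A,R')$. By Definition \ref{notations}, $\overline{X^\cdot}(N)$ is the kernel of the map $X^\cdot(R\oplus N)\to X^\cdot(R)$ induced by $\pi_1\colon R\oplus N\to R$; that is, it consists of those $R$-algebra homomorphisms $\varphi\colon A\to R\oplus N$ whose composite with $\pi_1$ equals the structural morphism $A\to R$ determined by the fact that $X$ is an $R$-\emph{module} scheme (in particular has the zero section $\Spec R\to X$, giving the augmentation $\epsilon\colon A\to R$).

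First I would make precise which augmentation $A\to R$ is being used: since $X$ is an $R$-module scheme it has a zero section, and the corresponding $R$-algebra map $\epsilon\colon A\to R$ is the one relative to which $R\oplus N$ is regarded as an $A$-algebra. Then an element of $\overline{X^\cdot}(N)$ is an $R$-algebra map $\varphi\colon A\to R\oplus N$ with $\pi_1\circ\varphi=\epsilon$, so $\varphi(a)=(\epsilon(a),D(a))$ for a well-defined map $D\colon A\to N$. Next I would check that $\varphi$ being a ring homomorphism over $R$ translates exactly into: $D$ is $R$-linear and $D(ab)=\epsilon(a)D(b)+\epsilon(b)D(a)$, i.e. $D$ is an $R$-derivation $A\to N$ where $N$ is made an $A$-module via $\epsilon$. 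This is the usual dual-numbers computation: multiplying $(\epsilon(a),D(a))(\epsilon(b),D(b))$ in $R\oplus N$ using the product rule of \ref{notations} and comparing with $(\epsilon(ab),D(ab))$ gives the Leibniz rule, while additivity and $R$-linearity of $\varphi$ give additivity and $R$-linearity of $D$. Conversely any $R$-derivation $D$ produces such a $\varphi$. This gives a bijection $\overline{X^\cdot}(N)=\Der_R(A,N)$.

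Finally I would verify functoriality in $N$: given $w\colon N\to N'$, the induced map $\bar{\mathbb F}(w)$ is post-composition with $\tilde w\colon R\oplus N\to R\oplus N'$, $(r,n)\mapsto(r,w(n))$, which on the derivation side is exactly $D\mapsto w\circ D$, the natural transformation making $\Der_R(A,-)$ a functor. Hence the bijection is natural and we get an isomorphism of functors $\overline{X^\cdot}\cong\Der_R(A,-)$.

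The main subtlety — rather than a genuine obstacle — is bookkeeping around the augmentation: one must be careful that the kernel in Definition \ref{notations} is taken with respect to $\pi_1$ and that the relevant $A$-module structure on $N$ is the one pulled back along the zero-section augmentation $\epsilon$, not some other algebra map; once that is pinned down the rest is the routine dual-numbers identification. I would also note in passing that the hypothesis that $X$ is an $R$-module scheme (not merely an affine scheme) is only used to fix this canonical augmentation, so $\Der_R(A,N)$ here means derivations with $N$ viewed as an $A$-module via $\epsilon$.
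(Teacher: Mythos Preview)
Your proof is correct and follows the same approach as the paper's: identify the zero element of $X^\cdot(R)$ with the augmentation $\epsilon\colon A\to R$ coming from the zero section, so that $\overline{X^\cdot}(N)$ is the set of $R$-algebra maps $A\to R\oplus N$ lying over $\epsilon$, and then invoke the standard dual-numbers identification with $\Der_R(A,N)$. The paper's proof is more terse (it states the final equality without spelling out the Leibniz-rule computation or functoriality), while you have written out these verifications explicitly; otherwise the arguments are identical.
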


\begin{proof} Consider the additive identity element $e\in X^\cdot(R)=\Hom_{R-alg}(A,R)$. Then any $R$-module, $N$, is an $A$-module through the morphism $e$. Let $\pi_1\colon R\oplus N\to R$ be defined by $\pi_1(r,n):=r$. Then,
$$\overline{X^\cdot}(N)=\{f\in X^{\cdot}(R\oplus N)=\Hom_{R-alg}(A,R\oplus N)\colon 
\pi_1 \circ f= e\}=\Der_R(A,N).$$

\end{proof}

\begin{proposition} \label{L5.112} Let $X$ be an $R$-module scheme.
Let $N\hookrightarrow N'$ be an injective morphism of $R$-modules. Then,  the morphism $\overline{X^\cdot}(N)\to \overline{X^\cdot}(N')$ is injective.

Let $X$ be an $R$-module affine scheme and
let $\{N_i\}_{i\in I}$ be a set of $R$-modules. The natural morphism
$$\overline{X^\cdot}(\prod_{i\in I} N_i)\to \prod_{i\in I} \overline{X^\cdot}(N_i)$$ is an isomorphism.

\end{proposition}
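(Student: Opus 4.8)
The statement splits into two claims. For the first, I would use Proposition~\ref{nuevo} to identify $\overline{X^\cdot}$ with the derivation functor: if $X=\Spec A$, then $\overline{X^\cdot}(N)=\Der_R(A,N)$, where $A$ is viewed as an $R$-algebra augmented by the identity element $e\in X^\cdot(R)$ and $N$ carries the induced $A$-module structure. An $R$-derivation $D\colon A\to N$ is completely determined by its values, and composing with an injection $N\hookrightarrow N'$ kills no nonzero derivation: if $D'\colon A\to N'$ is the composite and $D'=0$, then $D(a)=0$ in $N$ for every $a$ since $N\to N'$ is injective. Hence $\Der_R(A,N)\to\Der_R(A,N')$ is injective, which is exactly $\overline{X^\cdot}(N)\hookrightarrow\overline{X^\cdot}(N')$. (If one prefers not to invoke \ref{nuevo}, the same follows directly: $\overline{X^\cdot}(N)\subseteq X^\cdot(R\oplus N)=\Hom_{R-\mathrm{alg}}(A,R\oplus N)$, and the injection $N\hookrightarrow N'$ induces an injection $R\oplus N\hookrightarrow R\oplus N'$ of $R$-algebras, hence an injection on $\Hom_{R-\mathrm{alg}}(A,-)$.) Note that for this part affineness of $X$ is not needed, only that $X^\cdot$ is an $\R$-module.

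For the second claim, again write $X=\Spec A$ and use $\overline{X^\cdot}(N)=\Der_R(A,N)$. The point is the standard fact that derivations into a product of modules are the product of the derivations: a map $D\colon A\to\prod_i N_i$ is an $R$-derivation if and only if each component $D_i=p_i\circ D\colon A\to N_i$ is an $R$-derivation, because the Leibniz rule and $R$-linearity are checked componentwise. This gives the bijection $\Der_R(A,\prod_i N_i)\xrightarrow{\sim}\prod_i\Der_R(A,N_i)$. I must only check that the $A$-module structures match up: the augmentation $e\colon A\to R$ is the same for all the factors, so the $A$-action on $\prod_i N_i$ is the product of the $A$-actions on the $N_i$, and the identification is $A$-linear in the appropriate sense. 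The natural morphism in the statement is induced by the projections $\prod_i N_i\to N_i$, which under the functor $\bar{(-)}$ become the maps $\Der_R(A,\prod_i N_i)\to\Der_R(A,N_i)$; so the natural morphism is precisely the canonical one, and it is an isomorphism.

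\textbf{Main obstacle.} The only subtle point is that the second claim genuinely requires $X$ to be affine (unlike the first). The isomorphism $\overline{X^\cdot}(N)=\Der_R(A,N)$ from Proposition~\ref{nuevo} is stated for affine module schemes, and more fundamentally the identity $X^\cdot(R\oplus\prod_i N_i)=\Hom_{R-\mathrm{alg}}(A,R\oplus\prod_i N_i)$ together with the fact that an algebra map into a product-type ring $R\oplus\prod_i N_i$ is determined by its projections uses that $X^\cdot=(\Spec A)^\cdot$ is corepresentable; for a non-affine $X$ the functor $X^\cdot$ does not commute with such limits of algebras. So the structure of the write-up is: first reduce both parts to statements about $\Der_R(A,-)$ via \ref{nuevo}, then invoke (i) injectivity of $N\hookrightarrow N'$ passing to derivations and (ii) componentwise verification of the derivation axioms for maps into a product. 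Both are routine once the reduction is in place; I would spend a sentence each on them and flag explicitly where affineness enters.
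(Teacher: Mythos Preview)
Your treatment of the second claim is essentially identical to the paper's: reduce via Proposition~\ref{nuevo} to $\Der_R(A,-)$ and use that derivations into a product are the product of derivations. Nothing to add there.

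For the first claim there is a genuine gap. The statement asserts injectivity for an arbitrary $R$-module scheme $X$, not an affine one, and this generality is actually used later (e.g.\ in the implication $(3)\Rightarrow(1)$ of Theorem~\ref{T3.7}). Both arguments you sketch---the one via $\Der_R(A,N)$ and the parenthetical one via $\Hom_{R\text{-alg}}(A,R\oplus N)$---presuppose $X=\Spec A$. You then assert ``for this part affineness of $X$ is not needed,'' but you give no argument for that; once you drop the corepresenting algebra $A$, neither of your computations applies, and it is not automatic that an injective ring map $R\oplus N\hookrightarrow R\oplus N'$ induces an injection on $X^\cdot$ for a general functor or scheme $X$.

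The paper closes this gap with a direct scheme-theoretic argument that does not mention $A$: if $f,f'\colon \Spec(R\oplus N)\to X$ become equal after precomposing with $\Spec(R\oplus N')\to\Spec(R\oplus N)$, then since this morphism is a homeomorphism on underlying spaces (both are $\Spec R$) one gets $f(x)=f'(x)$ for every point $x$, and on stalks the induced maps $O_{X,f(x)}\rightrightarrows (R\oplus N)_x\hookrightarrow (R\oplus N')_x$ agree, hence $f=f'$. You should either supply this argument or restrict your first claim to the affine case and note explicitly that the general case requires more.
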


\begin{proof} Let $f,f'\colon \Spec(R\oplus N)\to X$ be two morphisms of $R$-schemes and assume the composite morphisms of $R$-schemes
$$\xymatrix{ \Spec(R\oplus N') \ar[r]  & \Spec(R\oplus N) \ar@<1ex>[r]^-f \ar@<-1ex>[r]_-{f'} & X}$$
are equal. Then, $f(x)=f'(x)$, for any $x\in \Spec(R\oplus N')=\Spec(R\oplus N)=\Spec R$, and the induced
composite morphisms between the rings of functions, on stalks at $x$,
$$\xymatrix{O_{X,f(x)} \ar@<1ex>[r] \ar@<-1ex>[r]  & (R\oplus N)_x \ar@{^{(}->}[r] & (R\oplus N')_x}$$
are equal (given an $R$-module $M$, we denote $M_x=(R\backslash \mathfrak p_x)^{-1}\cdot M$, where $\mathfrak p_x\subset R$ is the prime ideal associated with
$x$). Then, $f=f'$ and $\overline{X^\cdot}(N)\to \overline{X^\cdot}(N')$ is injective.

Let  $X=\Spec A$ be an $R$-module scheme. Then,

$$\overline{X^\cdot}(\prod_i N_i)\overset{\text{\ref{nuevo}}}=\Der_R(A,\prod_iN_i)=\prod_i\Der_R(A,N_i)\overset{\text{\ref{nuevo}}}=\prod_i
\overline{X^\cdot}(N_i).$$

\end{proof}

\begin{proposition} \label{3.1} Let $M$ be an $R$-module. The following statements are equivalent: \begin{enumerate}

\item $M$ is a finitely generated projective $R$-module.

\item $\mathcal M =\mathcal M_{sch}$.

\item $\mathcal M^*={\mathcal M^*}_{qc}$.

%\item $M\otimes_R N =\Hom_R(M^*,N),$
%for any $R$-module $N$.

\end{enumerate}

\end{proposition}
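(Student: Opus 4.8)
The plan is to prove the cycle of implications $(1)\Rightarrow(2)\Rightarrow(3)\Rightarrow(1)$, using the adjunction-type results from the Preliminaries, particularly Propositions \ref{1211b}, \ref{1211}, \ref{L5.112} and the reflexivity Theorem \ref{reflex}. First I would record the computation underlying everything: for an $R$-module $M$ the natural morphism $\mathcal M\to\mathcal M_{sch}$ is the one induced (via Proposition \ref{trivial}) by the canonical morphism $(\mathcal M^*)_{qc}\to\mathcal M^*$, and by Examples \ref{1.30} together with Proposition \ref{1211b}.1 we have $\overline{\mathcal M_{sch}}(N)=\Hom_R(M^*,N)$ and $\overline{\mathcal M}(N)=M\otimes_R N$, so the morphism $\mathcal M\to\mathcal M_{sch}$ corresponds on the associated module functors to the canonical map $M\otimes_R N\to\Hom_R(M^*,N)$.

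For $(1)\Rightarrow(2)$: if $M$ is finitely generated projective, then $M^*$ is also finitely generated projective and the canonical map $M\otimes_R N\to\Hom_R(M^*,N)$ is an isomorphism for every $N$ (this is the standard ``finite projective'' evaluation isomorphism, checked locally or by reducing to the free case $M=R^n$). Hence $\bar{\phi}\colon\bar{\mathcal M}\to\overline{\mathcal M_{sch}}$ is an isomorphism of functors. To promote this to $\mathcal M=\mathcal M_{sch}$ I would argue that $\phi\colon\mathcal M\to\mathcal M_{sch}$ is a morphism of $\mathcal R$-module schemes (both sides being $\mathcal R$-module schemes, $\mathcal M$ because $M$ is finite projective — this is Proposition \ref{3.1} type content but in the free-summand case it is immediate since $\mathcal M$ is a direct summand of $\mathcal R^n$) and that the functor $X\mapsto\overline{X^\cdot}$ is faithful on $\mathcal R$-module schemes; indeed by Proposition \ref{nuevo}, $\overline{X^\cdot}(N)=\Der_R(A,N)$ recovers the module of differentials data, and a morphism of affine $R$-module schemes is determined by the induced map $A'\to A$ which in turn is determined by its action on the augmentation ideal, i.e. by $\bar{\phi}$. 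So $\bar\phi$ an isomorphism forces $\phi$ an isomorphism.

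For $(2)\Rightarrow(3)$: this is essentially the definition. If $\mathcal M=\mathcal M_{sch}$, then by Proposition \ref{1211} applied with $\mathbb N=\mathcal M$ (or directly) $\mathcal M_{sch}$ is a reflexive $\mathcal R$-module affine scheme: $\mathcal M_{sch}=((\mathcal M^*)_{qc})^*$ is by construction a dual functor of a quasi-coherent module, and dual functors of quasi-coherent modules are reflexive by Theorem \ref{reflex} together with the fact that $(\mathcal N)^*$ for $\mathcal N$ quasi-coherent equals $\mathcal N^*$ as defined, hence $((\mathcal M^*)_{qc})^{*}$ is reflexive. So $\mathcal M^*=((\mathcal M^*)_{qc})$ follows after applying $(-)^*$ and reflexivity; stated as (3), $\mathcal M^*={\mathcal M^*}_{qc}$.

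For $(3)\Rightarrow(1)$: suppose $\mathcal M^*=(\mathcal M^*)_{qc}$, i.e. the natural morphism $(\mathcal M^*)_{qc}\to\mathcal M^*$ is an isomorphism. Evaluating on the bar-functor and using Examples \ref{1.30}, this says the canonical map $M\otimes_R M^*\to\Hom_R(M,M)$ — no, more carefully: from $\mathcal M^*=(\mathcal M^*)_{qc}$ and Proposition \ref{1211b}.2 (or \ref{1211}) we get, for every quasi-coherent $\mathcal N$, that $\Hom_{\mathcal R}(\mathcal M,\mathcal N)=\Hom_{\mathcal R}(\mathcal M_{sch},\mathcal N)=M^*{}^*(R)\otimes$-type expressions collapse, and in particular the identity $M\to M$ pulls back to an element of $M^*{}^*(R)\otimes$... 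The cleanest route: $(\mathcal M^*)_{qc}\to\mathcal M^*$ an isomorphism means $M^*\otimes_R N\to\Hom_R(M,N)$ is an isomorphism for all $N$; taking $N=M$ and chasing the identity gives a ``dual basis'', i.e. finitely many $m_i\in M$, $w_i\in M^*$ with $x=\sum w_i(x)m_i$, which is exactly the characterization of finitely generated projective modules. The main obstacle I expect is the faithfulness step in $(1)\Rightarrow(2)$: passing from an isomorphism of the bar-functors $\bar\phi$ to an isomorphism of the $\mathcal R$-module schemes themselves. One must be careful that $X\mapsto\overline{X^\cdot}$, restricted to affine $R$-module schemes, is not merely faithful but reflects isomorphisms — which ultimately rests on Proposition \ref{L5.112} (injectivity on injections) and the fact that an $R$-algebra map between augmented $R$-algebras is recovered from its restriction to augmentation ideals when those algebras are generated in degree one, as happens for symmetric algebras $S^\cdot_R M$ and for the function algebras of $\mathcal M$ when $M$ is finite projective.
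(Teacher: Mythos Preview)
Your argument is essentially correct but takes a genuinely different route from the paper. The paper's proof is two lines: for $(1)\Leftrightarrow(2)$ it simply cites the external result \cite{Amel2} (that $M$ is finitely generated projective iff $\mathcal M\simeq\mathcal N^*$ for some $R$-module $N$; then necessarily $N\simeq M^*$ by reflexivity, so $\mathcal M=\mathcal M_{sch}$), and for $(2)\Leftrightarrow(3)$ it observes that these are dual statements via Theorem~\ref{reflex}. Your approach, by contrast, is self-contained and unwinds the dual-basis characterization of finite projective modules directly---a real gain if one does not want to lean on the citation.

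That said, your $(1)\Rightarrow(2)$ is more convoluted than necessary. You pass through the bar-functor, establish that $\bar\phi$ is an isomorphism, and then spend effort arguing that the bar-functor reflects isomorphisms on affine $\mathcal R$-module schemes (faithfulness, augmentation ideals, generation in degree one). None of this is needed: by Proposition~\ref{1211b}(1) you already have $\mathcal M_{sch}(R')=\Hom_R(M^*,R')$ for every commutative $R$-algebra $R'$, and of course $\mathcal M(R')=M\otimes_R R'$, so $\phi_{R'}\colon M\otimes_R R'\to\Hom_R(M^*,R')$ is \emph{literally} the map you want, evaluated pointwise. For $M$ finitely generated projective this is an isomorphism for every $R'$ (reduce to $M=R^n$, or use that $M^*$ is again finitely generated projective), and you are done. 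The bar-functor \emph{is} the right tool for your $(3)\Rightarrow(1)$, where you genuinely need to pass from $R$-algebras to an arbitrary $R$-module (namely $N=M$, via $R'=R\oplus M$) in order to pull back $\mathrm{id}_M$ and produce the dual basis; but for $(1)\Rightarrow(2)$ it only creates extra work and the lifting worry you flag as ``the main obstacle'' is an artifact of that detour.
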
 

\begin{proof} $(1)\Leftrightarrow (2)$ By \cite{Amel2}, $M$ is a finitely generated projective $R$-module iff $\mathcal M=\mathcal N^*$ for some $R$-module $N$.

$(2) \Leftrightarrow (3)$ $(2)$ is the dual statement of $(3)$ and vice versa. 

%$(1)   \Leftrightarrow (4)$ It is well known  (see \cite[A II p.78]{Bourbaki}). 
%(see \cite[VI 5.2]{cartan}). 
%
%$(2)  \Leftrightarrow (4)$ $\mathcal M =\mathcal M_{sch}$ iff $\overline{\mathcal M} =\overline{\mathcal M_{sch}}$, that is,
%$$M\otimes_R N=\overline{\mathcal M}(N)=\overline{\mathcal M_{sch}}(N)\overset{\text{\ref{1211b}}}=\Hom_R(M^*,N),$$
%for any $R$-module $N$.

\end{proof}

\begin{corollary} \label{PFS} Let $M$ be an $R$-module. Then, $\mathcal M$ is an $\mathcal R$-module quasi-compact and quasi-separated  scheme iff $M$ is a finitely generated projective $R$-module.
\end{corollary}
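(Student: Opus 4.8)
The plan is to reduce \ref{PFS} entirely to the equivalence $(1)\Leftrightarrow(2)$ of Proposition \ref{3.1}, so that only the ``scheme-theoretic'' content remains to be supplied. Concretely, I would show that for a quasi-coherent $\R$-module $\mathcal M$, being an $\R$-module quasi-compact and quasi-separated scheme is equivalent to the condition $\mathcal M = \mathcal M_{sch}$. One direction is nearly immediate: $\mathcal M_{sch} = ((\mathcal M^*)_{qc})^*$ is by definition $(\Spec S^\cdot_R(\mathcal M^*(R)))^\cdot$, and an affine scheme of this form is visibly quasi-compact; it is also quasi-separated because it is affine (affine schemes are separated, hence quasi-separated). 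So if $\mathcal M = \mathcal M_{sch}$, then $\mathcal M$ is represented by a quasi-compact quasi-separated $R$-module scheme.

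For the converse — the direction I expect to be the main obstacle — suppose $\mathcal M = X^\cdot$ for an $R$-module scheme $X$ that is quasi-compact and quasi-separated. I would first use that $\mathcal M$ is quasi-coherent, hence reflexive, together with the fact that $\mathcal M$ is a scheme, to identify $\mathcal M^*$. The key point is to show that for a quasi-coherent module $\mathcal M$ which is moreover a quasi-compact quasi-separated scheme, the dual $\mathcal M^*$ is itself quasi-coherent, i.e. $\mathcal M^* = (\mathcal M^*)_{qc}$; dualizing this equality and invoking reflexivity (Theorem \ref{reflex}) gives $\mathcal M = \mathcal M^{**} = ((\mathcal M^*)_{qc})^* = \mathcal M_{sch}$, which is condition $(2)$, and then Proposition \ref{3.1} finishes the argument. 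To establish $\mathcal M^* = (\mathcal M^*)_{qc}$ I would compute $\mathcal M^*(R') = \Hom_{R'}(M\otimes_R R', R')$ and use the quasi-compactness and quasi-separatedness of $X$ to control this $\Hom$: quasi-compactness lets one reduce questions about morphisms out of $X$ to a finite affine cover, and quasi-separatedness makes the gluing data along intersections well-behaved, so that $\Hom_{\R}(\mathcal M,\mathcal R')$ is ``finitely presented in $R'$'' in the appropriate sense and commutes with the relevant limits and base changes that characterize quasi-coherence. An alternative, possibly cleaner route is to invoke the cited result \cite{Amel2} (already used in the proof of \ref{3.1}) that $M$ finitely generated projective is equivalent to $\mathcal M$ being a dual $\mathcal N^*$, and to argue directly that a quasi-compact quasi-separated scheme structure on $\mathcal M$ forces $\mathcal M$ to be such a dual.

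I would present the proof in two short paragraphs: first the easy implication (condition $(2)$ $\Rightarrow$ quasi-compact quasi-separated scheme, using that $\mathcal M_{sch}$ is by construction an affine hence quasi-compact quasi-separated scheme), then the converse reduction to $\mathcal M^* = (\mathcal M^*)_{qc}$ and hence to Proposition \ref{3.1}. The genuinely delicate step is verifying that a quasi-compact quasi-separated $R$-module scheme representing a quasi-coherent module has quasi-coherent dual; everything else is formal manipulation with the adjunctions \ref{trivial}, \ref{tercerb}, \ref{1211}, and the reflexivity \ref{reflex} already in hand. If the paper's intended argument instead derives \ref{PFS} as a direct corollary of \ref{3.1} by simply citing the standard scheme-theoretic characterization of finitely generated projective modules (a module scheme $\Spec A$ with $A$ a finitely generated $R$-algebra, plus flatness/smoothness), then the proof collapses to a one-line invocation of \ref{3.1} together with that classical fact, and the obstacle above is absorbed into the cited literature.
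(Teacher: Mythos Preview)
Your easy direction ($\Leftarrow$) is fine and matches the paper: if $M$ is finitely generated projective then $\mathcal M=\mathcal M_{sch}$ by Proposition~\ref{3.1}, and $\mathcal M_{sch}$ is affine, hence qcqs.

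For the hard direction there is a genuine gap. You correctly observe that it would suffice to show $\mathcal M^* = (\mathcal M^*)_{qc}$, but this is condition~$(3)$ of Proposition~\ref{3.1}, already shown there to be \emph{equivalent} to $M$ being finitely generated projective. So the ``delicate step'' you isolate is the entire content of the corollary, and your sketch for it---pass to a finite affine cover, control gluing along intersections---does not address the actual question, which is whether the base-change map $M^*\otimes_R R'\to \Hom_{R'}(M\otimes_R R',R')$ is an isomorphism. Affine opens of an $R$-module scheme are not themselves $R$-module schemes, so you cannot reduce to Theorem~\ref{121}; and that theorem is stated only for $X$ affine (or $R$ a field), so it does not apply to a general qcqs $X$ over a ring. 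Your fallback suggestion of citing a ``classical scheme-theoretic characterization'' is also not what happens: no such external characterization is invoked.

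The paper's route is entirely different and never attempts to prove $\mathcal M=\mathcal M_{sch}$ directly. It extracts the three ingredients of ``finitely generated projective'' one at a time from the functorial machinery. First, $M$ is \emph{flat}: since $\mathcal M$ is a module scheme, $\overline{\mathcal M}$ preserves injections by Proposition~\ref{L5.112}, i.e.\ $M\otimes_R-$ is left exact. Second, $M$ is \emph{finitely generated}: writing $\mathcal M=\ilim{i}\mathcal M_i$ over the finitely generated submodules $M_i\subseteq M$, the identity $\mathcal M\to\ilim{i}\mathcal M_i$ factors through some $\mathcal M_i$ by the compactness statement Proposition~\ref{L5.11} (this is exactly where qcqs enters), forcing $M=M_i$. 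Third, $M$ is \emph{finitely presented}: choose a surjection $R^n\to M$ with kernel $N$; flatness of $M$ makes $0\to\mathcal N\to\mathcal R^n\to\mathcal M\to 0$ exact, so $\mathcal N$ is a closed subscheme of $\mathbb A^n_R$, hence itself a qcqs module scheme, and the previous step applied to $\mathcal N$ gives $N$ finitely generated. Finally, flat plus finitely presented implies finitely generated projective by \cite[6.6]{eisenbud}. The two propositions \ref{L5.112} and \ref{L5.11} carry the weight of the argument; your proposal does not engage with either.
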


\begin{proof} $\Leftarrow)$ $\mathcal M=\mathcal M_{sch}$, by \ref{3.1}. 

$\Rightarrow)$ Let $N\to N'$ be an injective morphism of $R$-modules.
The morphism
$$M\otimes_R N=\bar{\mathcal M}(N)\to \bar{\mathcal M}(N')=M\otimes_RN'$$
is injective, by Proposition \ref{L5.112}. Therefore $M$ is flat. Let $\{M_i\}$ be the set of the finitely generated submodules of $M$. The identity morphism $\mathcal M\to \mathcal M=\ilim{i}\mathcal M_i$ factors through some $\mathcal M_i$, by Proposition \ref{L5.11}. Hence, $M=M_i$ and $M$ is a finitely generated $R$-module. Let $\pi\colon L:=R^n\to M$ be an epimorphism and $N:=\Ker\pi$. The sequence of morphisms
$$0\to \mathcal N\to \mathcal L\to \mathcal M\to 0$$
is exact because $M$ is flat. $\mathcal N$ is an $\mathcal R$-module quasi-compact and quasi-separated  scheme (it is the functor of points of a closed subset of $\Spec S^\cdot L^*$), then $N$ is a finitely generated $R$-module and
$M$ is a finitely presented flat $R$-module. By \cite[6.6]{eisenbud}, $M$ is  a finitely generated projective $R$-module.

\end{proof}

\begin{theorem}  \label{121} Let $X$ be an $R$-module affine scheme\footnote{Or let $X$ be a quasi-compact and  quasi-separated scheme and let $R$ be a field.}. 
Let $\mathbb M:=X^\cdot$.  Then, $\mathbb M^*$ is a quasi-coherent $\R$-module and $\mathbb M_{sch}=\mathbb M^{**}$.
\end{theorem}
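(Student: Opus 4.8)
The plan is to show that $\mathbb{M}^* = X^{\cdot *}$ is quasi-coherent, and then deduce $\mathbb{M}_{sch} = \mathbb{M}^{**}$ from the definition $\mathbb{M}_{sch} = ((\mathbb{M}^*)_{qc})^*$. The crucial reduction is that once we know $\mathbb{M}^*$ is quasi-coherent, then $(\mathbb{M}^*)_{qc} = \mathbb{M}^*$ (an $\R$-module is quasi-coherent iff the natural morphism from its $qc$-ification is an isomorphism), and hence $\mathbb{M}_{sch} = ((\mathbb{M}^*)_{qc})^* = \mathbb{M}^{**}$ immediately. So the entire content of the theorem is the assertion that $\mathbb{M}^*$ is quasi-coherent when $X$ is an $R$-module affine scheme.

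To prove $\mathbb{M}^*$ is quasi-coherent, I would use the machinery of the functor $\overline{(\cdot)}$ from Definition \ref{notations}. By Proposition \ref{nuevo}, $\overline{\mathbb{M}}(N) = \overline{X^\cdot}(N) = \Der_R(A, N)$ where $X = \Spec A$. Now $\mathbb{M}^*$ is itself a dual functor, so by Proposition \ref{adj2} we have $\mathbb{M}^*(R') = \Hom_{\R}(\mathbb{M}, \mathcal{R'})$. I want to compute $\mathbb{M}^*(R)$ and then show $\mathbb{M}^*(R') = \mathbb{M}^*(R) \otimes_R R'$. The key structural input is Proposition \ref{L5.112}: for an $R$-module affine scheme $X$, the functor $\overline{X^\cdot}$ commutes with arbitrary products, and it sends injections to injections. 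A functor $N \mapsto \Der_R(A,N)$ that commutes with arbitrary direct products and is left exact is, by a standard Watt-type / Eilenberg–Watts argument, representable — i.e. there is an $R$-module $M'$ with $\Der_R(A, N) = \Hom_R(M', N)$ functorially. Concretely $M' = \Omega_{A/R} \otimes_A R$ via the section $e$, the "cotangent module at the identity", but what matters is just that $\overline{\mathbb{M}} \cong \overline{\mathcal{M}'^*}$ as functors on $R$-modules, recalling from Example \ref{1.30} that $\overline{\mathcal{M}'^*}(N) = \Hom_R(M', N)$.

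The next step is to upgrade the isomorphism $\overline{\mathbb{M}} \cong \overline{\mathcal{M}'^*}$ of functors-on-modules to an isomorphism of $\R$-modules, or at least to pin down $\mathbb{M}^*$. Here I would argue: $\mathbb{M}^*(R) = \Hom_{\R}(\mathbb{M}, \R)$, and a morphism of $\R$-modules is determined by (and any compatible assignment extends to) its effect captured through the $\overline{(\cdot)}$ construction together with the value at $R$; more precisely, one uses that $\mathbb{F}(R \oplus N) = \mathbb{F}(R) \oplus \overline{\mathbb{F}}(N)$ canonically, so morphisms out of $\mathbb{M}$ are controlled by $\overline{\mathbb{M}}$. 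This should give $\mathbb{M}^*(R) = M'$, and then running the same computation over $R'$ — noting $X_{|R'} = (\Spec A \otimes_R R')$ is still an $R'$-module affine scheme and the cotangent module at the identity of $A \otimes_R R'$ is $M' \otimes_R R'$ — yields $\mathbb{M}^*(R') = M' \otimes_R R' = \mathcal{M}'(R')$, i.e. $\mathbb{M}^* = \mathcal{M}'$ is quasi-coherent.

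The main obstacle I anticipate is the representability step: carefully verifying that $\Der_R(A, -)$, as an additive functor from $R$-modules to $R$-modules which (by Proposition \ref{L5.112}) preserves kernels and arbitrary products, is representable by a genuine $R$-module. One must be slightly careful because $\Der_R(A, -)$ need not preserve cokernels, so this is not the full Eilenberg–Watts theorem but the "left-exact, product-preserving $\Rightarrow$ representable" variant; the module representing it is $I/I^2$ (equivalently $\Omega_{A/R} \otimes_A R$ via $e$), where $I = \Ker(A \xrightarrow{e} R)$, and one checks $\Der_R(A,N) = \Hom_R(I/I^2, N)$. A secondary but routine point is the base-change compatibility $(I/I^2) \otimes_R R' = I'/I'^2$ for $A' = A \otimes_R R'$, which lets the argument run uniformly over all $R'$ and delivers quasi-coherence rather than just an abstract isomorphism at the level of $R$-points.
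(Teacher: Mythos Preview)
Your reduction---showing $\mathbb{M}^*$ is quasi-coherent and then deducing $\mathbb{M}_{sch}=((\mathbb{M}^*)_{qc})^*=(\mathbb{M}^*)^*=\mathbb{M}^{**}$---is exactly right, and matches the paper's logic. The gap is in the identification $\mathbb{M}^*(R)\cong M'=I/I^2$.

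The functor $\overline{\mathbb{M}}$ only records the behavior of $\mathbb{M}$ on square-zero extensions $R\oplus N$, i.e.\ tangent-space data at the identity. An element of $\mathbb{M}^*(R)=\Hom_{\mathcal R}(\mathbb{M},\mathcal R)$, on the other hand, is a \emph{global} $\mathcal R$-linear functional: concretely, an element $a\in A$ that is both primitive ($\Delta(a)=a\otimes 1+1\otimes a$) and of weight~$1$ for the scaling action of $\mathbb{A}^1_R$. The map $\mathbb{M}^*(R)\to I/I^2$ you describe (via $\overline{(\cdot)}$) is always injective, but surjectivity fails in general. The paper's own example immediately after the theorem exhibits this: over $R=\mathbb Z/2\mathbb Z$, take $\mathbb{M}=\mathcal R$ as abelian-group functor but with $\mathcal R$-action $\lambda*m=\lambda^2 m$. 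Then $X=\Spec R[y]$ with $y$ of weight~$2$, so $I=(y)$, $I/I^2\cong R$, yet $\mathbb{M}^*(R)=0$. Your argument would output $M'=R$, not $0$. The sentence ``morphisms out of $\mathbb{M}$ are controlled by $\overline{\mathbb{M}}$'' is precisely where the $\mathcal R$-module (scaling) structure is dropped, and that structure is essential.

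The paper's proof avoids this by working directly inside $A$: the $G_m$-action gives a grading $A=\bigoplus_n A_n$, one checks $A_0=R$ and that every $a\in A_1$ is primitive, whence $\mathbb{M}^*(R)=A_1$. Since the grading is defined by the $G_m$-coaction, it is stable under base change, giving $\mathbb{M}^*(R')=A_1\otimes_R R'$ and hence quasi-coherence. If you want to rescue your approach, you would need to supplement the derivation computation with the weight-$1$ condition coming from the $\mathbb{A}^1_R$-action---at which point you are essentially reproducing the graded-piece argument.
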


\begin{proof} Let $\mathbb Hom(X^\cdot, \R)$ be the functor (of functions of $X$) defined by
$$\mathbb Hom(X^\cdot, \R)({R'})=\Hom(X^\cdot_{|{R'}},\mathcal {R'})=\Hom_{{R'}-sch}(X\times_R{R'},\mathbb A^1_{R'}),$$
If $X=\Spec A$, then $\mathbb Hom(X^\cdot, \R)=\mathcal A$
is a quasicoherent $\R$-module and the functor of functions of $X\times_R X$ is equal to $\mathcal A\otimes_{\R}\mathcal A$.

1. $G_m:=\Spec R[x,1/x]$ (in fact $\mathbb A^1_R$) acts  on $X$. Then, $G_m^\cdot $ (in fact, $\R$) acts on $\mathcal A$ and $A=\oplus_{n\in \mathbb N} A_n$, where
$\lambda*a_n=\lambda^n\cdot a_n$, for any $\lambda\in G_m^\cdot$ and
$a_n\in A_n$. 

2. Let $\Delta\colon A\to A\otimes_R A$ be the coproduct morphism (which is obtained from the addition morphism $X\times X\to X$). Then, $\Delta(A_n)\subseteq \oplus_{i=0}^{i=n} A_i\otimes A_{n-i}$, because $G_m$ acts linearly on $X$.

3. $A_0=R$:  Given $a_0\in A_0$, $a_0(m)=a_0(\alpha\cdot m)$, for any $m\in \mathbb M$ and $\alpha\in \R$. Then, 
$a_0(m)=a_0(0)$, for any $m\in \mathbb M$. 

Let $i>0$ and $w\in A_i$, then $w(0)=0$: $w(0)=w(\alpha*0)=\alpha^i\cdot w(0)$, for any $\alpha\in \R$, then $w(0)=0\cdot w(0)=0$.  

4. $\Delta(w)=w\otimes 1+1\otimes w$, for any $w\in A_1\colon$
$\Delta(w)=w'\otimes 1+1\otimes w'$ for some $w'\in A_1$, because
$\mathbb M$ is commutative (and $A_0=R$).  Then,
$$w(m)=w(m+0)=w'(m)+w'(0)=w'(m),$$
for any $m\in\mathbb M$, that is to say, $w=w'$. 

5. $A_1=\mathbb M^*(R)\colon$ $\mathbb M^*\subseteq \mathcal A$ and obviously $\mathbb M^*(R)\subseteq A_1$. By 4., $A_1\subseteq \mathbb M^*(R)$.

6. By change of rings, $R\to {R'}$, we have that $\mathbb M^*({R'})=(\mathbb M_{|{R'}})^*({R'})=A_1\otimes_R{R'}$. Hence,
$\mathcal A_1=\mathbb M^*$. Finally, $\mathbb M_{sch}=((\mathbb M^{*})_{qc})^*=\mathbb M^{**}$.
\end{proof}

\begin{example} Assume $R=\mathbb Z/2\mathbb Z$. Let $X$ be the obvious $R$-module scheme $\Spec R[x]$, and 
$Y=\Spec R[x]/(x^2)$ the obvious $R$-module subscheme of $X$. Let $\mathbb M=Y^\cdot$. Then,  $\mathbb M^*=\R$ and $\mathbb M^{**}=X^\cdot$.
\end{example}

Assume $R$ is a ring of characteristic $2$. Let $\mathbb M$ be the obvious functor of Abelian groups $\R$. Consider $\mathbb M$ as  an $\R$-module  as follows:
$$\lambda * m:=\lambda^2\cdot m,\, \forall \lambda\in\R \text{ and } m\in\mathbb M.$$
$\mathbb M$ is an $\R$-module scheme, but $\mathbb M$ is not the $\R$-module scheme associated with an $R$-module because $\mathbb M^*(R)=0$.

\begin{corollary} \label{T3.3} Let $X$ be an $R$-module quasi-compact and quasi-separated scheme. Assume $X$ is affine or $R$ is a field.
Then, $X^\cdot$ is the $\R$-module scheme associated with an $R$-module iff $X^\cdot$ is a reflexive $\R$-module. 
%A quasi-coherent module $\mathcal M$ is an $\R$-module affine scheme  iff $M$ is a projective module of finite type.

\end{corollary}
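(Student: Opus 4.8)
The plan is to read this off Theorem \ref{121} with essentially no extra work, since the hypotheses on $X$ are precisely those under which \ref{121} applies: the affine case, or $R$ a field with $X$ quasi-compact and quasi-separated, as in the footnote there. Recall that ``the $\R$-module scheme associated with an $R$-module $M$'' means the functor $\mathcal M^*$, so the claim is that $X^\cdot\simeq\mathcal M^*$ for some $R$-module $M$ if and only if $X^\cdot$ is reflexive.

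For the direction ``$X^\cdot$ associated with an $R$-module $\Rightarrow$ $X^\cdot$ reflexive'' I would not even use that $X^\cdot$ is a scheme. If $X^\cdot\simeq\mathcal M^*$, then Theorem \ref{reflex} gives $\mathcal M\simeq\mathcal M^{**}$, and dualizing this isomorphism yields $\mathcal M^*\simeq\mathcal M^{***}=(\mathcal M^*)^{**}$; one checks this is the canonical biduality morphism, which is routine (it is the general fact that any dual functor is reflexive). Hence $X^\cdot\simeq\mathcal M^*$ is a reflexive $\R$-module.

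For the converse, set $\mathbb M:=X^\cdot$. Theorem \ref{121} tells us two things: that $\mathbb M^*$ is a quasi-coherent $\R$-module, and that $\mathbb M_{sch}=\mathbb M^{**}$. Being quasi-coherent means $\mathbb M^*=\mathcal N$ for $N:=\mathbb M^*(R)$. Assuming now that $\mathbb M$ is reflexive, we get $X^\cdot=\mathbb M=\mathbb M^{**}=(\mathbb M^*)^*=\mathcal N^*$, which is by definition the $\R$-module scheme associated with the $R$-module $N=(X^\cdot)^*(R)$. This closes the equivalence.

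I do not anticipate a genuine obstacle here: all the content has already been absorbed into Theorem \ref{121} (the grading argument identifying $A_1$ with $\mathbb M^*(R)$, and the stability of quasi-coherence under change of base ring). The only points requiring care are formal ones: that ``$\mathbb M^*$ quasi-coherent'' is literally the statement that $(\mathbb M^*)_{qc}\to\mathbb M^*$ is an isomorphism, so that $\mathbb M_{sch}=((\mathbb M^*)_{qc})^*=\mathbb M^{**}$ really does hold; and that ``the $\R$-module scheme associated with an $R$-module'' means exactly a functor of the form $\mathcal N^*$, so that exhibiting $X^\cdot=\mathcal N^*$ is precisely what is demanded.
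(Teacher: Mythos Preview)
Your proposal is correct and follows essentially the same route as the paper. The paper's proof is even terser: it only writes out the $\Leftarrow$ direction, observing that if $\mathbb M=X^\cdot$ is reflexive then $\mathbb M=\mathbb M^{**}$ is the module scheme associated with $\mathbb M^*(R)$ by Theorem~\ref{121}; the $\Rightarrow$ direction is left implicit, being the immediate consequence of Theorem~\ref{reflex} that you spell out. One minor caveat: your parenthetical ``it is the general fact that any dual functor is reflexive'' is a slight overstatement in full generality, but your actual argument---dualizing the isomorphism $\mathcal M\simeq\mathcal M^{**}$ and invoking the triangle identity to identify the result with the biduality map for $\mathcal M^*$---is the correct one and is exactly what is needed.
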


\begin{proof} $\Leftarrow)$ Let $\mathbb M=X^\cdot$ be reflexive. Then,   $\mathbb M=\mathbb M^{**}$ is the module scheme associated with $\mathbb M^*(R)$, by Theorem \ref{121}.

%Now, if  $\mathcal M$ is an $\R$-module affine scheme, then $\mathcal M$ is the $\R$-module scheme associated with $M^*$.  Hence,
%$M$ is a projective module of finite type by \cite{Amel2}.
%
%If $M$ is a projective module of finite type, then $\mathcal M$ is the $\R$-module scheme associated with a $R$-module by \cite{Amel2}. In particular, $\mathcal M$ is 
%an $\R$-module affine scheme.
\end{proof}

\begin{proposition} \label{3.6z} Let $\mathbb M$ be
an $\R$-module affine scheme, and $\mathbb N$ a dual $\R$-module. Then,
$$\Hom_{\R}(\mathbb M,\mathbb N)=
\Hom_{\R}(\mathbb M_{sch},\mathbb N).$$

Let $\mathbb N'$ be  an $\R$-module. A morphism $f\colon \mathbb N'\to \mathbb N$ factors through 
a morphism from $\mathbb N'$ to an $\R$-module affine scheme  iff  $f$ factors through 
the morphism $\mathbb N'\to \mathbb N'\,_{sch}$.
\end{proposition}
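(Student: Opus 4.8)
The plan is to deduce both statements from the universal property of $\mathbb M_{sch}$ already established, namely the functorial isomorphism $\Hom_{\R}(\mathbb M,\mathcal N^*)=\Hom_{\R}(\mathbb M_{sch},\mathcal N^*)$ of Proposition \ref{1211}, combined with the structure theorem \ref{121} which tells us that when $\mathbb M$ is an $\R$-module affine scheme, $\mathbb M^*$ is quasi-coherent and $\mathbb M_{sch}=\mathbb M^{**}$. First I would record that any dual $\R$-module $\mathbb N=\mathbb L^*$ is, by Proposition \ref{trivial}, a representing object for the functor $\mathbb X\mapsto\Hom_{\R}(\mathbb X,\mathbb L^*)=\Hom_{\R}(\mathbb L,\mathbb X^*)$. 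So for the first assertion I want to compare $\Hom_{\R}(\mathbb M,\mathbb L^*)$ with $\Hom_{\R}(\mathbb M_{sch},\mathbb L^*)$. Using Proposition \ref{trivial} twice, $\Hom_{\R}(\mathbb M,\mathbb L^*)=\Hom_{\R}(\mathbb L,\mathbb M^*)$ and $\Hom_{\R}(\mathbb M_{sch},\mathbb L^*)=\Hom_{\R}(\mathbb L,(\mathbb M_{sch})^*)$, so it suffices to show the natural map $\mathbb M^*\to(\mathbb M_{sch})^*$ induced by $\mathbb M\to\mathbb M_{sch}$ is an isomorphism. Since $\mathbb M$ is an $\R$-module affine scheme, Theorem \ref{121} gives $\mathbb M^*$ quasi-coherent, say $\mathbb M^*=\mathcal V$ with $V=\mathbb M^*(R)$, and $\mathbb M_{sch}=\mathbb M^{**}=\mathcal V^*$; then $(\mathbb M_{sch})^*=\mathcal V^{**}=\mathcal V$ by the reflexivity of quasi-coherent modules (Theorem \ref{reflex}), and one checks the natural map is precisely this identification. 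Alternatively — and this is the cleaner route — I would simply invoke Proposition \ref{1211} directly with $\mathcal M$ chosen so that $\mathcal M^*$ recovers $\mathbb N$: since $\mathbb N$ is a dual module and $\mathbb M_{sch}$ is as in the hypothesis, the equality $\Hom_{\R}(\mathbb N',\mathcal M^*)=\Hom_{\R}(\mathbb N'_{sch},\mathcal M^*)$ of \ref{1211} already is the statement once we note $\mathbb N=\mathcal M^*$; the only subtlety is that \ref{1211} is stated with $\mathbb N$ general on the source side, so I need the source to be $\mathbb M$ itself, which is fine since \ref{1211} holds for arbitrary source $\R$-module.

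For the second assertion, the forward direction is almost formal: if $f\colon\mathbb N'\to\mathbb N$ factors as $\mathbb N'\xrightarrow{g}\mathbb K\to\mathbb N$ with $\mathbb K$ an $\R$-module affine scheme, then by the universal property of $(-)_{sch}$ — that is, by the functorial equality $\Hom_{\R}(\mathbb N'_{sch},\mathbb K)=\Hom_{\R}(\mathbb N',\mathbb K)$ valid for $\mathbb K$ a dual module (here using that an $\R$-module affine scheme $\mathbb K$ satisfies $\mathbb K=\mathbb K_{sch}$ is \emph{not} automatic, but $\mathbb K_{sch}$ receives a canonical map and we only need $g$ to lift) — the map $g$ extends over $\mathbb N'\to\mathbb N'_{sch}$. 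More carefully: since $\mathbb K$ is an $\R$-module affine scheme, Theorem \ref{121} shows $\mathbb K_{sch}=\mathbb K^{**}$ and there is a canonical morphism $\mathbb K\to\mathbb K_{sch}$; but I actually want the reverse, so instead I would argue that $g\colon\mathbb N'\to\mathbb K$ with $\mathbb K$ an $\R$-module affine scheme factors through $\mathbb N'\to\mathbb N'_{sch}$ because $\Hom_{\R}(\mathbb N',\mathbb K)=\Hom_{\R}(\mathbb N',\mathbb K_{sch})$ — this is \ref{1211} applied with the roles appropriately assigned, using that $\mathbb K$ being an affine module scheme makes $\mathbb K$ a dual module of the quasi-coherent $\mathbb K^*$ — wait, $\mathbb K=\mathbb K^{**}=(\mathbb K^*)^*$ and $\mathbb K^*$ is quasi-coherent, so $\mathbb K$ \emph{is} of the form $\mathcal W^*$ with $W=\mathbb K^*(R)$, hence \ref{1211} applies verbatim and $\Hom_{\R}(\mathbb N',\mathbb K)=\Hom_{\R}(\mathbb N'_{sch},\mathbb K)$. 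Composing the resulting map $\mathbb N'_{sch}\to\mathbb K$ with $\mathbb K\to\mathbb N$ gives the desired factorization through $\mathbb N'\to\mathbb N'_{sch}$. The converse is trivial since $\mathbb N'_{sch}$ is itself an $\R$-module affine scheme by construction.

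The main obstacle I anticipate is purely bookkeeping: making sure that every "$\R$-module affine scheme" appearing is genuinely a dual module (so that the representability results \ref{trivial}, \ref{1211} apply), which is exactly what Theorem \ref{121} delivers — every $\R$-module affine scheme $\mathbb K$ has $\mathbb K^*$ quasi-coherent and $\mathbb K=\mathbb K^{**}$. Once that identification $\mathbb K=\mathcal W^*$ with $\mathcal W=\mathbb K^*$ is in hand, both halves reduce mechanically to \ref{1211}. I would therefore structure the write-up as: (i) note $\mathbb M^*$ quasi-coherent and hence $\mathbb M$ is a dual module of a quasi-coherent module via \ref{121}; (ii) apply \ref{1211} to get the first displayed equality; (iii) for the second claim, rewrite any target affine module scheme $\mathbb K$ as $\mathcal W^*$ via \ref{121} and apply \ref{1211} again to lift any morphism $\mathbb N'\to\mathbb K$ over $\mathbb N'\to\mathbb N'_{sch}$; (iv) observe the converse is immediate because $\mathbb N'_{sch}$ is an affine module scheme.
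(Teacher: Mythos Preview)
Your first approach to the displayed equality is correct and is essentially the paper's argument: write $\mathbb N=\mathbb P^*$, apply Proposition~\ref{trivial}, use Theorem~\ref{121} to identify $\mathbb M^*$ with $(\mathbb M^*)_{qc}$ (since $\mathbb M^*$ is quasi-coherent), then apply Proposition~\ref{trivial} again. Your ``cleaner route'' via Proposition~\ref{1211} does not quite work as stated, since \ref{1211} requires the target to be $\mathcal M^*$ for an $R$-module $M$, whereas a general dual $\R$-module $\mathbb N=\mathbb P^*$ need not have $\mathbb P$ quasi-coherent; but since your first approach succeeds this is harmless for the first assertion.

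The genuine gap is in the second assertion. You claim that an $\R$-module affine scheme $\mathbb K$ satisfies $\mathbb K=\mathbb K^{**}$, hence $\mathbb K=\mathcal W^*$ with $W=\mathbb K^*(R)$, so that Proposition~\ref{1211} applies and $g\colon\mathbb N'\to\mathbb K$ lifts over $\mathbb N'\to\mathbb N'_{sch}$. But Theorem~\ref{121} does \emph{not} assert $\mathbb K=\mathbb K^{**}$: it says $\mathbb K^*$ is quasi-coherent and $\mathbb K_{sch}=\mathbb K^{**}$, while the natural map $\mathbb K\to\mathbb K_{sch}$ need not be an isomorphism. The example immediately after Theorem~\ref{121} (with $R=\mathbb Z/2\mathbb Z$ and $\mathbb K=(\Spec R[x]/(x^2))^\cdot$) exhibits exactly this failure: there $\mathbb K^{**}\simeq\R\neq\mathbb K$. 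In that example the identity $\mathbb K\to\mathbb K$ does not factor through $\mathbb K\to\mathbb K_{sch}=\R$, since $\Hom_{\R}(\R,\mathbb K)=\mathbb K(R)=0$; so the lift you want is impossible in general.

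The paper avoids this by lifting the \emph{other} leg of the factorization. Given $\mathbb N'\to\mathbb K\to\mathbb N$ with $\mathbb K$ an $\R$-module affine scheme, the first part of the proposition (already established) factors $\mathbb K\to\mathbb N$ through $\mathbb K\to\mathbb K_{sch}$. Functoriality of $(-)_{sch}$ applied to $\mathbb N'\to\mathbb K$ supplies $\mathbb N'_{sch}\to\mathbb K_{sch}$, and the composite $\mathbb N'_{sch}\to\mathbb K_{sch}\to\mathbb N$ is the required factorization of $f$ through $\mathbb N'\to\mathbb N'_{sch}$.
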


\begin{proof} Let us write $\mathbb N=\mathbb P^*$. Then,
$$\Hom_{\R}(\mathbb M,\mathbb N)  \overset{\text{\ref{trivial}}}=
\Hom_{\R}(\mathbb P,\mathbb M^*)\overset{\text{\ref{121}}}=\Hom_{\R}(\mathbb P,(\mathbb M^*)_{qc})\overset{\text{\ref{trivial}}} =
\Hom_{\R}(\mathbb M_{sch},\mathbb N).$$

Finally, if $f$ factors through a morphism $\mathbb N'\to \mathbb M$, where $\mathbb M$ is an $\R$-module affine scheme, then we have a commutative diagram
$$\xymatrix{\mathbb  N' \ar[r]  \ar[d] & \mathbb M \ar[r] \ar[d]  & \mathbb N\\ \mathbb N'\,_{sch} \ar[r]  & \mathbb M_{sch} \ar[ur] &  }$$
and $f$ factors through  $\mathbb N'\to \mathbb N'\,_{sch}$.
\end{proof}

\section{Characterization of flat modules}

\begin{lemma} \label{a} Let $N$ and $M$ be $R$-modules.
Given a morphism $f\colon \mathcal N^*\to \mathcal M$ of  $\R$-modules, there exist a finitely generated free module   $L$ and morphisms $g\colon \mathcal N^*\to \mathcal  L$ and $h\colon \mathcal L\to \mathcal M$, such that the diagram
$$\xymatrix{\mathcal N^* \ar[r]^-f \ar[rd]_-g & \mathcal M\\ & \mathcal L\ar[u]_-h} $$
is commutative.\end{lemma}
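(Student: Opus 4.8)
The plan is to translate the morphism $f$ into an element of an ordinary tensor product, exploit that such an element is a \emph{finite} sum of simple tensors, and read off $L$, $g$ and $h$ from that sum.

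First I would invoke Theorem \ref{prop4} evaluated at $R$: since $\mathcal N\otimes_{\R}\mathcal M$ is the quasi-coherent module associated with $N\otimes_R M$, it gives a natural identification $\Hom_{\R}(\mathcal N^*,\mathcal M)=N\otimes_R M$, under which a morphism $\varphi$ corresponds to the element $t_\varphi=\sum_j n_j\otimes m_j$ characterised by $\varphi(w)=\sum_j w(n_j)\cdot m_j$ for every $w\in\mathcal N^*$ (over every $R$-algebra). Let $t=\sum_{i=1}^{n} n_i\otimes m_i$ be the element attached to the given $f$; the only substantive point is that this sum is finite.

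Next, put $L:=R^n$, so $\mathcal L=\mathcal R^{n}$. Using Theorem \ref{prop4} again, $\Hom_{\R}(\mathcal N^*,\mathcal R^{n})=N\otimes_R R^n=N^{n}$, and I let $g\colon\mathcal N^*\to\mathcal L$ be the morphism corresponding to $(n_1,\dots,n_n)$, i.e. $g(w)=(w(n_1),\dots,w(n_n))$. By the equivalence of categories between $R$-modules and quasi-coherent $\R$-modules, $\Hom_{\R}(\mathcal R^{n},\mathcal M)=\Hom_R(R^n,M)=M^{n}$, and I let $h\colon\mathcal L\to\mathcal M$ be the morphism corresponding to $(m_1,\dots,m_n)$, i.e. $h(e_i)=m_i$.

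It then remains to check commutativity, which is immediate from the explicit formulas: for $w\in\mathcal N^*$ one has $(h\circ g)(w)=h\bigl(w(n_1),\dots,w(n_n)\bigr)=\sum_{i=1}^{n} w(n_i)\cdot m_i$, and this equals $f(w)$ by the description of the element $t$ associated with $f$; since the identity is natural in the $R$-algebra, $f=h\circ g$. I do not expect any real obstacle: all the content sits in Theorem \ref{prop4}, and once the pairing $\Hom_{\R}(\mathcal N^*,\mathcal M)=N\otimes_R M$ is available the factorization is forced by writing a tensor as a finite sum. (Equivalently, $g$ is the dual of the $R$-module map $R^n\to N$, $e_i\mapsto n_i$, and $t$ is the image of $\sum_i e_i\otimes m_i\in R^n\otimes_R M$ under that map tensored with $\mathrm{id}_M$, so the factorization also follows formally from the naturality of the isomorphism in Theorem \ref{prop4}.)
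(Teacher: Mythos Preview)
Your proof is correct and follows essentially the same approach as the paper: both use Theorem~\ref{prop4} to write $f$ as a finite sum $\sum_{i=1}^r n_i\otimes m_i\in N\otimes_R M$, set $L=R^r$, define $h$ by $h(e_i)=m_i$, and take $g=\sum_i n_i\otimes e_i\in N\otimes_R L=\Hom_{\R}(\mathcal N^*,\mathcal L)$. Your version is simply more explicit about the pointwise formulas and the verification of $h\circ g=f$.
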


\begin{proof} We have $f=\sum_{i=1}^r n_i \otimes m_i\in N\otimes M\overset{\text{\ref{prop4}}}=\Hom_{\R}(\mathcal N^*,\mathcal M)$. Let $L:=R^r$, and let $\{e_i\}$ be the standard basis of $L$. Let $h\colon \mathcal L\to \mathcal M$ be defined by
  $h(e_i):=m_i$. Then,  $g:=\sum_{i=1}^r n_i\otimes e_i\in N\otimes L\overset{\text{\ref{prop4}}}=\Hom_{\R}(\mathcal N^*,\mathcal L)$ verifies $h\circ g=f$.
\end{proof}

\begin{note} \label{Nota} In particular, every morphism $f\colon \mathcal N^*\to \mathcal M$ factors through the quasi-coherent $\R$-module associated with a finitely generated  submodule   of $M$ (for example, $\Ima h_R$).

\end{note}

\begin{lemma}  \label{b} Let $N$ be a finitely presented $R$-module and let $M$ be a flat $R$-module. Every morphism $\mathcal N\to\mathcal M$ uniquely factors through $\mathcal N\to \mathcal N_{sch}$, that is,
$$\Hom_{\R}(\mathcal N_{sch},\mathcal M)\overset{\text{\ref{1211b}}}=N^*\otimes M\overset{\text{\cite[7.11]{Matsumura}}}=\Hom_{\R}(\mathcal N,\mathcal M).$$
\end{lemma}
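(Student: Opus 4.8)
Let $N$ be a finitely presented $R$-module and $M$ a flat $R$-module. Every morphism $\mathcal N\to\mathcal M$ uniquely factors through $\mathcal N\to\mathcal N_{sch}$; equivalently,
$$\Hom_{\R}(\mathcal N_{sch},\mathcal M)=N^*\otimes_R M=\Hom_{\R}(\mathcal N,\mathcal M).$$

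Let me sketch how I'd prove this.

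**The plan:**

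The second equation displayed in the lemma is actually a chain of two isomorphisms, and the statement about factoring through $\mathcal N \to \mathcal N_{sch}$ is equivalent to saying that composing with the natural morphism $\mathcal N \to \mathcal N_{sch}$ induces a bijection on Hom-sets into $\mathcal M$. So the plan is to establish these isomorphisms and check they're compatible with the natural map.

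First, I'd identify $\Hom_{\R}(\mathcal N, \mathcal M)$. Since $\mathcal N$ is quasi-coherent, by \ref{tercer} we have $\Hom_{\R}(\mathcal N,\mathcal M) = \Hom_R(N, \mathcal M(R)) = \Hom_R(N, M)$. Now since $N$ is finitely presented, the natural map $N^* \otimes_R M \to \Hom_R(N,M)$ is an isomorphism — this is the standard fact (cited as \cite[7.11]{Matsumura}) that $N^* \otimes_R M \xrightarrow{\sim} \Hom_R(N,M)$ when $N$ is finitely presented and $M$ is flat (in fact finite presentation of $N$ plus flatness of $M$ suffices; one reduces to the case $N = R^n$ by taking a presentation $R^p \to R^q \to N \to 0$, applying $\Hom_R(-,M)$ and $-\otimes_R M$, and using the five lemma together with flatness of $M$). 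That handles the right-hand isomorphism.

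Next, I'd compute $\Hom_{\R}(\mathcal N_{sch}, \mathcal M)$. This is exactly \ref{1211b}(2) applied to the $\R$-module $\mathbb N = \mathcal N$: it gives $\Hom_{\R}(\mathcal N_{sch}, \mathcal M) = (\mathcal N)^*(R) \otimes_R M$. But $(\mathcal N)^*(R) = \Hom_{\R}(\mathcal N, \mathcal R) = \Hom_R(N, R) = N^*$ (again using \ref{tercer}, or \ref{adj2}), so $\Hom_{\R}(\mathcal N_{sch},\mathcal M) = N^* \otimes_R M$. That gives the left-hand isomorphism.

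Finally — and this is the step requiring a little care — I'd check that the composite bijection $\Hom_{\R}(\mathcal N_{sch},\mathcal M) \to \Hom_{\R}(\mathcal N,\mathcal M)$ obtained by identifying both sides with $N^* \otimes_R M$ is precisely the map "precompose with $\mathcal N \to \mathcal N_{sch}$". This amounts to unwinding the definition of the natural morphism $\mathcal N \to \mathcal N_{sch}$ (given explicitly in the excerpt as $n \mapsto \tilde n$ with $\tilde n(w) := w_{R'}(n)$) and matching it against the identification in \ref{1211b}: under $\Hom_{\R}(\mathcal N_{sch},\mathcal M) = N^*\otimes_R M$, an element $\sum w_i \otimes m_i$ corresponds to the morphism sending $\phi \in \mathcal N_{sch}(R')$ to $\sum \phi(w_i) m_i$, and precomposing with $n \mapsto \tilde n$ sends $n \otimes r'$ to $\sum w_i(n) m_i \otimes r'$, which is exactly the image of $\sum w_i \otimes m_i$ under $N^* \otimes_R M = \Hom_R(N,M) \supseteq \Hom_{\R}(\mathcal N, \mathcal M)$. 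So the diagram commutes, the natural map is a bijection, and uniqueness of the factorization follows from injectivity.

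**Main obstacle:** The genuinely substantive input is the classical isomorphism $N^* \otimes_R M \cong \Hom_R(N,M)$ under "$N$ finitely presented, $M$ flat", but that is simply quoted from Matsumura. So the only real work left in the proof is the bookkeeping in the last paragraph — verifying that the two abstract identifications are glued together by the concrete natural morphism $\mathcal N \to \mathcal N_{sch}$ rather than by some other isomorphism. I expect this compatibility check to be the one place where one must actually write diagrams and chase elements; everything else is citation.
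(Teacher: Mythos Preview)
Your proposal is correct and follows essentially the same approach as the paper: both reduce the lemma to the classical isomorphism $N^*\otimes_R M\cong\Hom_R(N,M)$ for $N$ finitely presented and $M$ flat, proved by taking a finite free presentation of $N$ and using left exactness of both functors. The paper's proof is terser---it records only this classical isomorphism, since the identifications with $\Hom_{\R}(\mathcal N_{sch},\mathcal M)$ and $\Hom_{\R}(\mathcal N,\mathcal M)$ are already built into the statement via the labels \ref{1211b} and \ref{tercer}---whereas you additionally spell out the compatibility with the natural map $\mathcal N\to\mathcal N_{sch}$, which the paper leaves implicit.
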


\begin{proof} Let us recall that $N^*\otimes M=\Hom_{R}(N,M)$: 
If $N=R^n$ then it is obvious. $
\Hom_{R}(-,M)$ and $(-)^*\otimes_R M$ are contravariant left exact functors. Finally, $N$ is equal to the cokernel of a morphism between  finitely generated free modules. 
\end{proof}

\begin{proposition} \label{e6.6} \cite[6.6]{eisenbud} Let $M$ be a flat $R$-module, $N$ a finitely presented $R$-module and $f\colon \mathcal N\to \mathcal M$ a morphism of $\R$-modules. Then, there exist a finitely generated free module  $L$ and  morphisms $g\colon \mathcal N\to \mathcal L$, $h\colon\mathcal L\to \mathcal M$ such that the diagram
$$
\xymatrix{\mathcal N \ar[r]^-f \ar[rd]_-g & \mathcal M\\ & \mathcal L\ar[u]_-h}
$$
is commutative.
\end{proposition}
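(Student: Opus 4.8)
The plan is to reduce the statement, via duality, to the already-proved Lemma \ref{a}. The key observation is that for a flat module $M$ and a finitely presented module $N$, the module $\mathcal N$ is ``as good as'' a dual module for the purpose of mapping to $\mathcal M$: by Lemma \ref{b}, $\Hom_{\R}(\mathcal N,\mathcal M)=N^*\otimes M=\Hom_{\R}(\mathcal N_{sch},\mathcal M)$, and $\mathcal N_{sch}=(\,(\mathcal N^*)_{qc}\,)^*=(\mathcal N^{*})^*$ is a dual $\R$-module (here $\mathcal N^*$ is quasi-coherent, hence its own quasi-coherent module, so $\mathcal N_{sch}=\mathcal N^{**}=\mathcal N$ is in fact quasi-coherent — but the point is just that it is presented as $\mathcal P^*$ with $P=N^*$). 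So the given morphism $f\colon\mathcal N\to\mathcal M$ corresponds to a morphism $\tilde f\colon\mathcal P^*\to\mathcal M$ with $\mathcal P^*=\mathcal N_{sch}$, and I can apply Lemma \ref{a} to $\tilde f$.

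Concretely, I would proceed as follows. First, note that $N$ finitely presented implies (by Lemma \ref{b}'s proof) $N^*\otimes M=\Hom_R(N,M)$ and, dualizing, that $\mathcal N=\mathcal N_{sch}=(\mathcal{N^*})^*$; write $P:=N^*$, so $\mathcal N=\mathcal P^*$. Second, view $f$ as an element of $\Hom_{\R}(\mathcal P^*,\mathcal M)$ and apply Lemma \ref{a}: there are a finitely generated free module $L$ and morphisms $g_0\colon\mathcal P^*\to\mathcal L$, $h\colon\mathcal L\to\mathcal M$ with $h\circ g_0=f$. Third, under the identification $\mathcal N=\mathcal P^*$ this is exactly the desired factorization $\mathcal N\xrightarrow{g}\mathcal L\xrightarrow{h}\mathcal M$ with $g:=g_0$. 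The flatness of $M$ enters only through Lemma \ref{b}, which is where $N^*\otimes M=\Hom_R(N,M)$ (equivalently $\mathcal N=\mathcal N_{sch}$) is used; finite presentation of $N$ is likewise needed there to write $N$ as the cokernel of a map of finite free modules and invoke left-exactness of $(-)^*\otimes M$ and $\Hom_R(-,M)$.

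The main subtlety — really the only place any thought is required — is justifying that $\mathcal N$ may legitimately be treated as a dual module so that Lemma \ref{a} applies. For a general $R$-module $N$ this fails, but finite presentation plus flatness of $M$ makes the relevant $\Hom$-groups coincide with those for $\mathcal N_{sch}$, and $\mathcal N_{sch}$ is by construction of the form $\mathcal Q^*$ (with $Q=(\mathcal N^*)_{qc}(R)=N^*$). Equivalently, since $N$ is finitely presented one can check directly that $\mathcal N$ is reflexive, $\mathcal N=\mathcal N^{**}=(\mathcal{N^*})^*$, so Lemma \ref{a} applies verbatim with the dual module $\mathcal{N^*}$ in the role of the ``$N$'' there. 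Everything after that reduction is a routine transcription, with no calculation to grind through.
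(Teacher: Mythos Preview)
Your overall plan---factor $f$ through $\mathcal N_{sch}$ via Lemma \ref{b}, then apply Lemma \ref{a} to the resulting map $\mathcal N_{sch}\to\mathcal M$---is exactly the paper's proof. But your execution contains a genuine error: you assert that $\mathcal N^*$ is quasi-coherent and hence that $\mathcal N=\mathcal N_{sch}=(\mathcal{N^*})^*$. By Proposition \ref{3.1} this holds if and only if $N$ is a finitely generated \emph{projective} $R$-module, not merely finitely presented. For a concrete counterexample take $N=R/I$ with $I$ containing a non-zerodivisor: then $N^*=0$, so $\mathcal N_{sch}=0$, while $\mathcal N\neq 0$. Lemma \ref{b} does \emph{not} say $\mathcal N=\mathcal N_{sch}$; it only says they have the same $\Hom$ into $\mathcal M$ when $M$ is flat, i.e.\ that the natural map $\mathcal N\to\mathcal N_{sch}$ induces a bijection $\Hom_{\R}(\mathcal N_{sch},\mathcal M)\to\Hom_{\R}(\mathcal N,\mathcal M)$.

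The fix is immediate and recovers the paper's argument: do not identify $\mathcal N$ with $\mathcal N_{sch}$; instead use the canonical morphism $\iota\colon\mathcal N\to\mathcal N_{sch}$. Lemma \ref{b} gives a (unique) $\tilde f\colon\mathcal N_{sch}\to\mathcal M$ with $\tilde f\circ\iota=f$. Since $\mathcal N_{sch}=(\mathcal{N^*})^*$ with $N^*$ an honest $R$-module, Lemma \ref{a} applies to $\tilde f$ and yields $\mathcal N_{sch}\xrightarrow{g_0}\mathcal L\xrightarrow{h}\mathcal M$ with $h\circ g_0=\tilde f$. Now set $g:=g_0\circ\iota\colon\mathcal N\to\mathcal L$; then $h\circ g=f$ as required.
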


\begin{proof} By Lemma \ref{b} and Lemma \ref{a}, we have a commutative diagram
$$
\xymatrix{\mathcal N \ar[r]^-f \ar[rd]  &  \mathcal M &\\ & \mathcal N_{sch} \ar@{-->}[u] \ar@{..>}[r] & \mathcal L\ar@{..>}[ul]}
$$
\end{proof}

Any module is a direct limit of finitely presented modules. 
The Govorov-Lazard states that any flat module
 is a direct limit of finitely generated  free modules (see \cite[A6.6]{eisenbud}). The proof of this theorem is based on Proposition \ref{e6.6}.

\begin{theorem}  \label{T3.7}  \label{4.8z}  Let $M$ be an $R$-module. The following statements are equivalent
\begin{enumerate}

\item $M$ is a flat $R$-module.

\item $\mathcal M$ is a direct limit of module schemes associated with $R$-modules, $\mathcal  M=\ilim{i} \mathcal N_i^*$

\item $\mathcal M$ is a direct limit of $\R$-module  schemes.

\end{enumerate}

\end{theorem}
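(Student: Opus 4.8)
The plan is to prove the cycle $(1)\Rightarrow(2)\Rightarrow(3)\Rightarrow(1)$, leaning on the factorization machinery already set up. The implication $(2)\Rightarrow(3)$ is immediate, since each $\mathcal N_i^*$ is an $\R$-module affine scheme (it is $(\Spec S^\cdot_R N_i)^\cdot$), so a direct limit of schemes of the form $\mathcal N_i^*$ is in particular a direct limit of $\R$-module schemes. So the two substantive implications are $(1)\Rightarrow(2)$ and $(3)\Rightarrow(1)$.

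For $(1)\Rightarrow(2)$: write $M=\ilim{j} N_j$ with each $N_j$ finitely presented, which is always possible. Since $M$ is flat, the Govorov--Lazard circle of ideas—concretely Proposition \ref{e6.6}—lets me interpolate finitely generated free modules: each transition $\mathcal N_j\to\mathcal M$ factors as $\mathcal N_j\to\mathcal L_k\to\mathcal M$ with $L_k$ finite free. Passing to duals via the equivalence $\mathcal N\mapsto \mathcal N_{sch}$ (Lemma \ref{b}, valid because the $N_j$ are finitely presented and $M$ is flat), I get that $\mathcal M=\ilim{k}\mathcal L_k=\ilim{k}\mathcal L_k^{*}$ using $\mathcal L_k=\mathcal L_k^{**}$; alternatively I realize $\mathcal M$ directly as $\ilim{j}\mathcal N_{j,sch}=\ilim{j}\mathcal N_j^{*}$ after checking the transition maps $\mathcal N_{j,sch}\to\mathcal N_{j',sch}$ assemble into a directed system with limit $\mathcal M$. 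The key point to verify carefully is that applying $(-)_{sch}$ to the directed system $\{\mathcal N_j\}$ and using Lemma \ref{b} to identify $\Hom_\R(\mathcal N_{j,sch},\mathcal M)=\Hom_\R(\mathcal N_j,\mathcal M)$ really produces $\mathcal M$ as the colimit, not merely a functor mapping compatibly to $\mathcal M$; this is where the flatness hypothesis is genuinely used, since Lemma \ref{b} requires it.

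For $(3)\Rightarrow(1)$: suppose $\mathcal M=\ilim{i}\mathbb M_i$ with each $\mathbb M_i$ an $\R$-module scheme. To show $M$ is flat I must show $M\otimes_R-$ is exact, equivalently (by \ref{1.30}, $\bar{\mathcal M}(N)=M\otimes_R N$) that $N\mapsto \bar{\mathcal M}(N)$ is exact. By Proposition \ref{2.16}, $\overline{\mathcal M}=\overline{\ilim{i}\mathbb M_i}=\ilim{i}\overline{\mathbb M_i}$, and since filtered colimits of modules are exact it suffices to show each $\overline{\mathbb M_i}$ is an exact functor of $N$. But $\mathbb M_i=X_i^\cdot$ for a module scheme $X_i$, and when $X_i=\Spec A_i$ is affine Proposition \ref{nuevo} gives $\overline{\mathbb M_i}(N)=\Der_R(A_i,N)$; Proposition \ref{L5.112} already records that this is left exact (preserves injections) and commutes with products. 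Left exactness plus the fact that the colimit is over a directed system forces $\bar{\mathcal M}$ to preserve injections, i.e.\ $M$ is flat, by the standard criterion that a module is flat iff tensoring preserves injections of (finitely generated) modules. The one thing to handle is the general (non-affine) module scheme case: either reduce to the affine case by covering, or—more in the spirit of the paper—first prove $(3)\Rightarrow(2)$ by using Lemma \ref{a} / Note \ref{Nota} to factor each $\mathcal R^n\to\mathbb M_i$ (equivalently, re-express the colimit through affine pieces) and then run the affine argument.

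The main obstacle I anticipate is bookkeeping in $(1)\Rightarrow(2)$: turning the pointwise factorizations of Proposition \ref{e6.6} into a genuine directed system $\{\mathcal N_i^{*}\}$ whose colimit is $\mathcal M$, i.e.\ checking cofinality/compatibility of the interpolating free modules and that the dualization $(-)_{sch}$ behaves well on the index category. The $(3)\Rightarrow(1)$ direction is comparatively soft once one commits to the functor $\bar{\mathbb F}$ and Propositions \ref{nuevo}, \ref{L5.112}, \ref{2.16}; its only delicate point is the passage from affine module schemes to arbitrary ones.
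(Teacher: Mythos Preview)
Your proposal is correct and follows essentially the same route as the paper: $(1)\Rightarrow(2)$ by writing $M=\ilim{i}N_i$ with each $N_i$ finitely presented and proving $\mathcal M=\ilim{i}\mathcal N_{i,sch}$ (the cofinality bookkeeping you anticipate is exactly what the paper carries out, via Lemma~\ref{b} and Proposition~\ref{L5.11}), and $(3)\Rightarrow(1)$ via the functor $\bar{\mathbb F}$ together with Propositions~\ref{2.16} and~\ref{L5.112}. One correction to your stated concerns: the non-affine case in $(3)\Rightarrow(1)$ needs no extra work, since the first assertion of Proposition~\ref{L5.112} (that $\overline{X^\cdot}(N)\to\overline{X^\cdot}(N')$ is injective whenever $N\hookrightarrow N'$) is stated and proved for an \emph{arbitrary} $R$-module scheme $X$, not only affine ones; also note that $\mathcal N_{j,sch}$ is the module scheme associated with $N_j^{*}$, not $\mathcal N_j^{*}$ itself, so $\ilim{j}\mathcal N_{j,sch}$ is already of the form demanded in (2).
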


\begin{proof} $(1) \Rightarrow (2)$  $M$ is a direct limit of  a direct system of finitely presented modules, $\{N_i,f_{ij}\}$. Let $f_{ij}^{sch}\colon \mathcal N_{i,sch}\to \mathcal N_{j,sch}$ be the morphism defined by $f_{ij}$.
Let us prove that $\mathcal M=\ilim{i} \mathcal N_{i,sch}$: Each morphism $\mathcal N_i\to\mathcal M$ factors through a unique morphism $\mathcal N_{i,sch}\to\mathcal M$ by \ref{b}. The morphism $\mathcal N_{i,sch}\to\mathcal M=\ilim{i}\mathcal N_i$, factors through  a morphism $\mathcal N_{i,sch}\to \mathcal N_j$ for some $j$, by \ref{L5.11}\footnote{Or because $\Hom_{\mathcal R}(\mathcal N^*,\ilim{i}\mathcal N_i)\overset{\text{\ref{prop4}}}=N\otimes_R(\ilim{i} N_i)=\ilim{i}(N\otimes_R N_i)\overset{\text{\ref{prop4}}}=\ilim{i}\Hom_{\mathcal R}(\mathcal N^*,\mathcal N_i)$.}. 
Since $N_i$ is finitely generated, there exists $k>j$, such 
the composite morphism $\mathcal N_i\to \mathcal N_{i,sch}\to \mathcal N_j\to \mathcal N_k$ is equal to 
$f_{ik}$. Therefore, the composite morphism $\mathcal N_{i,sch}\to \mathcal N_j\to \mathcal N_k\to \mathcal N_{k,sch}$ is equal to 
$f_{ik}^{sch}$.
Then, 
$$\ilim{i}\mathcal N_{i,sch}=\ilim{i}\mathcal N_i =\mathcal M.$$

 $(2) \Rightarrow (3)$ It is obvious.
 
  $(3) \Rightarrow (1)$ Write $\mathcal M=\ilim{i} \mathbb N_i$, where every $\mathbb N_i$ is an $\R$-module scheme. Let $N\hookrightarrow N'$ be an injective morphism of $R$-modules. Then, the composite morphism

$$\aligned M\otimes_R N & =\bar{\mathcal M}(N)=\overline{\ilim{i}\mathbb N_i} (N)\overset{\text{\ref{2.16}}}=\ilim{i} \bar{\mathbb N_i}(N)\overset{\text{\ref{L5.112}}}\hookrightarrow \ilim{i} \bar{\mathbb N_i}(N')\overset{\text{\ref{2.16}}}=\overline{\ilim{i} \mathbb N_i}(N')\\ & =\bar{\mathcal M}(N')=M\otimes_R N'\endaligned$$
is injective and $M$ is a flat $R$-module.

\end{proof}

\begin{theorem} \label{3.5T} Let $R$ be a Noetherian ring.  Let $M$ be an $R$-module and $\{M_i\}$ the set of the finitely generated  submodules of $M$.
$M$ is a flat $R$-module iff $$\mathcal M=\ilim{i} \mathcal M_{i,sch}.$$ \end{theorem}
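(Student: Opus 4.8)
The plan is to prove the two implications separately; essentially all the work is in the ``only if'' direction. For ``$\mathcal M=\ilim{i}\mathcal M_{i,sch}$ implies $M$ flat'', I would just note that, by Proposition \ref{1211b}(1), $\mathcal M_{i,sch}=((\mathcal M_i^*)_{qc})^*$ is the $\R$-module scheme associated with the $R$-module $\mathcal M_i^*(R)=\Hom_R(M_i,R)$. Hence a presentation $\mathcal M=\ilim{i}\mathcal M_{i,sch}$ exhibits $\mathcal M$ as a direct limit of $\R$-module schemes associated with $R$-modules, and Theorem \ref{T3.7} (the implication $(2)\Rightarrow(1)$, which uses no finiteness hypothesis) gives that $M$ is flat.

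For the converse, assume $M$ is flat. Since $R$ is Noetherian, every finitely generated submodule $M_i\subseteq M$ is finitely presented; moreover the set $I$ of such submodules, ordered by inclusion, is directed (the sum of two finitely generated submodules is finitely generated) and $\ilim{i\in I}M_i=M$, so $\mathcal M=\ilim{i}\mathcal M_i$. My plan is then to run the argument of Theorem \ref{T3.7}, $(1)\Rightarrow(2)$, verbatim on this particular direct system. First, by Lemma \ref{b}, each inclusion $\mathcal M_i\hookrightarrow\mathcal M$ factors uniquely through the canonical morphism $\mathcal M_i\to\mathcal M_{i,sch}$, producing $\phi_i\colon\mathcal M_{i,sch}\to\mathcal M$. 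Next, writing $\mathcal M_{i,sch}=\mathcal P_i^*$ with $P_i=\Hom_R(M_i,R)$, Theorem \ref{prop4} gives $\Hom_{\R}(\mathcal M_{i,sch},\ilim{j}\mathcal M_j)=\ilim{j}\Hom_{\R}(\mathcal M_{i,sch},\mathcal M_j)$, so $\phi_i$ factors through some $\mathcal M_j$ with $j\geq i$; and since $M_i$ is finitely presented, $\Hom_R(M_i,-)$ commutes with the filtered colimit $\ilim{k}M_k=M$, so there is $k\geq j$ for which the composite $\mathcal M_i\to\mathcal M_{i,sch}\to\mathcal M_j\to\mathcal M_k$ agrees with the inclusion $\mathcal M_i\hookrightarrow\mathcal M_k$.

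It remains to transport this equality to the $sch$-system: composing with $\mathcal M_k\to\mathcal M_{k,sch}$ and using that, by Proposition \ref{1211}, a morphism $\mathcal M_{i,sch}\to\mathcal P_k^*=\mathcal M_{k,sch}$ is determined by its restriction along $\mathcal M_i\to\mathcal M_{i,sch}$, one checks that $\mathcal M_{i,sch}\to\mathcal M_j\to\mathcal M_k\to\mathcal M_{k,sch}$ equals the structure morphism of the system $\{\mathcal M_{i,sch}\}$. Thus the two cofinal direct systems $\{\mathcal M_i\}$ and $\{\mathcal M_{i,sch}\}$ interleave, and $\ilim{i}\mathcal M_{i,sch}=\ilim{i}\mathcal M_i=\mathcal M$. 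I expect the last step --- the bookkeeping that makes the interleaving maps compatible with the structure morphisms of both systems --- to be the main obstacle; it is exactly here that Noetherianity enters, since it is what guarantees that the canonical system of finitely generated submodules of $M$ consists of finitely presented modules, so that Lemma \ref{b} and Proposition \ref{1211} apply to every $M_i$.
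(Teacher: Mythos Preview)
Your proposal is correct and follows essentially the same route as the paper. The paper's own proof of the forward direction is literally ``$M=\ilim{i}M_i$; proceed as in the proof of $(1)\Rightarrow(2)$ in Theorem~\ref{T3.7}'', and the backward direction is deduced from Theorem~\ref{4.8z}; you have simply unpacked that argument in detail, correctly identified Noetherianity as what makes each $M_i$ finitely presented (so that Lemma~\ref{b} applies), and used Theorem~\ref{prop4} for the factorization through some $\mathcal M_j$ exactly as the paper's footnote does.
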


\begin{proof} $\Rightarrow)$  $M=\ilim{i} M_i$. Proceed as in the proof of $(1) \Rightarrow (2)$ in Proposition \ref{T3.7}.
%
%Every morphism $\mathcal M_i\to\mathcal M$ uniquely factors through $\mathcal M_i\to \mathcal M_{i,sch}$, by \ref{b}.
%Every morphism 
%$\mathcal V^*\to \mathcal M$ factors through $\mathcal M_j\to\mathcal M$, for some $j$, by \ref{Nota}. Then, $$\mathcal M=\ilim{i} \mathcal M_i=\ilim{i} \mathcal M_{i,sch}.$$

$\Leftarrow)$ It is a consequence of Theorem \ref{4.8z}.
\end{proof}

\begin{lemma} \label{3.3T} If $\mathcal  M=\ilim{i} \mathcal N_i^*$, then $$M\otimes_RN=\ilim{i} \Hom_R(N_i,N),$$ for any $R$-module $N$.
\end{lemma}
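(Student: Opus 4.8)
The plan is to deduce the lemma purely formally from the ``bar'' construction $\mathbb F\mapsto\bar{\mathbb F}$ of Definition \ref{notations} together with its compatibility with direct limits. The underlying observation is that both operations occurring in the statement — tensoring with $N$ and $\Hom_R(-,N)$ — are instances of applying $\overline{(\ )}$ to a single $\R$-module and then evaluating at $N$, so the conclusion of the lemma is just the image of the hypothesis $\mathcal M=\ilim{i}\mathcal N_i^*$ under the functor $\overline{(\ )}(N)$.

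Concretely, I would proceed in three steps. First, recall from Examples \ref{1.30} that $\bar{\mathcal M}(N)=M\otimes_R N$ and $\overline{\mathcal N_i^*}(N)=\Hom_R(N_i,N)$, and that these identifications are natural in the module argument; in particular the transition morphisms $\mathcal N_i^*\to\mathcal N_j^*$ of the given system induce, via $\overline{(\ )}(N)$, precisely the maps $\Hom_R(N_i,N)\to\Hom_R(N_j,N)$ coming from the original directed system $\{N_i\}$ (this is functoriality of $\overline{(\ )}$ in the $\R$-module variable, recorded right after Definition \ref{notations}). Second, apply Proposition \ref{2.16}, which gives $\overline{\ilim{i}\mathcal N_i^*}=\ilim{i}\overline{\mathcal N_i^*}$ as functors on $R$-modules. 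Third, evaluate at $N$ and substitute the hypothesis $\mathcal M=\ilim{i}\mathcal N_i^*$ to obtain the chain
$$M\otimes_R N\overset{\text{\ref{1.30}}}=\bar{\mathcal M}(N)=\overline{\ilim{i}\mathcal N_i^*}(N)\overset{\text{\ref{2.16}}}=\ilim{i}\overline{\mathcal N_i^*}(N)\overset{\text{\ref{1.30}}}=\ilim{i}\Hom_R(N_i,N),$$
which is the desired identification.

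I do not expect a genuine obstacle here: every link in the chain is already available in the excerpt, so the only care required is bookkeeping — namely, that the isomorphism $\mathcal M=\ilim{i}\mathcal N_i^*$ is one of $\R$-modules (so that $\overline{(\ )}$ applies) and that the identifications of Examples \ref{1.30} are natural enough that the transition maps on both sides of the chain match up. If one preferred an argument not citing Proposition \ref{2.16}, one could instead evaluate the hypothesis directly on the algebra $R\oplus N$ and take kernels of the projections to $\mathbb F(R)$, using $(\ilim{i}\mathcal N_i^*)(R') = \ilim{i}\mathcal N_i^*(R')$; but invoking Proposition \ref{2.16} renders this detour unnecessary.
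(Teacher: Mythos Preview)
Your proof is correct and is essentially identical to the paper's own argument: the paper also applies the bar construction to the hypothesis $\mathcal M=\ilim{i}\mathcal N_i^*$, invokes Proposition~\ref{2.16} to commute $\overline{(\ )}$ with the direct limit, and reads off the two ends of the chain via Examples~\ref{1.30}. Your additional remarks on naturality of the transition maps and the alternative route via direct evaluation on $R\oplus N$ are sound but not needed beyond what the paper records.
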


\begin{proof} Recall Definition \ref{notations}.Then,  $$M\otimes_RN=\bar{\mathcal M}(N)
=\overline{\ilim{i} \mathcal N_i^*}(N)=\ilim{i} (\overline{\mathcal N_i^*}(N))=
\ilim{i} \Hom_R(N_i,N)$$ for any $R$-module $N$. 

\end{proof}

\begin{corollary}  \label{NMP} Let $R$ be a Noetherian ring.  Let $M$ be an $R$-module and $\{M_i\}$ the set of finitely generated  submodules of $M$. $M$  is a flat $R$-module iff
$$M\otimes_R N=\ilim{i} \Hom_{R}(M_i^*,N),$$
for every $R$-module $N$.\end{corollary}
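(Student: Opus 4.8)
The plan is to deduce the corollary directly from the results already assembled, treating it as a bridge between the scheme-theoretic characterization of flatness (Theorem \ref{3.5T}) and the tensor/Hom identity (Lemma \ref{3.3T}). First I would record that, since $R$ is Noetherian and each $M_i$ is finitely generated, each $M_i$ is in fact finitely presented, so the dual module $M_i^*$ is a legitimate finitely generated $R$-module and the quasi-coherent $\R$-module $\mathcal{M}_i^*$ makes sense; moreover $\mathcal{M}_{i,sch}=((\mathcal{M}_i^*)_{qc})^*$, and since $M_i^*$ is already an honest $R$-module one checks $(\mathcal{M}_i^*)_{qc}=\mathcal{M}_i^*$ by reflexivity of quasi-coherent modules (\ref{reflex}), whence $\mathcal{M}_{i,sch}=(\mathcal{M}_i^*)^*=(\mathcal{M}_i)^{**}$ — but I actually want it written as $\mathcal{N}_i^*$ with $N_i:=M_i^*$, so $\mathcal{M}_{i,sch}=\mathcal{N}_i^*$ for $N_i=M_i^*$.

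Next, for the forward implication: assume $M$ is flat. By Theorem \ref{3.5T}, $\mathcal{M}=\ilim{i}\mathcal{M}_{i,sch}=\ilim{i}\mathcal{N}_i^*$ with $N_i=M_i^*$. Apply Lemma \ref{3.3T} with this presentation: for any $R$-module $N$,
$$M\otimes_R N=\ilim{i}\Hom_R(N_i,N)=\ilim{i}\Hom_R(M_i^*,N),$$
which is exactly the asserted identity. For the converse: suppose $M\otimes_R N=\ilim{i}\Hom_R(M_i^*,N)$ for all $N$. I would reverse the computation of Lemma \ref{3.3T}, namely identify $\Hom_R(M_i^*,N)=\overline{\mathcal{N}_i^*}(N)$ via Example \ref{1.30} (with $N_i=M_i^*$), and $M\otimes_R N=\bar{\mathcal{M}}(N)$; since direct limits commute with the bar construction (\ref{2.16}), the hypothesis says $\bar{\mathcal{M}}$ and $\overline{\ilim{i}\mathcal{N}_i^*}$ agree on all $R$-modules $N$. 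The subtle point is upgrading this "agreement of bar functors" to an isomorphism $\mathcal{M}\cong\ilim{i}\mathcal{N}_i^*$ of $\R$-modules; I expect one can instead sidestep this and argue flatness directly: given an injection $N\hookrightarrow N'$, the square
$$\xymatrix{M\otimes_R N \ar[r]\ar@{=}[d] & M\otimes_R N' \ar@{=}[d]\\ \ilim{i}\Hom_R(M_i^*,N)\ar[r] & \ilim{i}\Hom_R(M_i^*,N')}$$
commutes, and the bottom map is injective because each $\Hom_R(M_i^*,-)$ is left exact and filtered colimits are exact, so the top map is injective and $M$ is flat.

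The main obstacle, and the step I would be most careful about, is the converse direction — specifically making sure the left-exactness argument genuinely applies. Each $\Hom_R(M_i^*,-)$ is left exact, so $\Hom_R(M_i^*,N)\hookrightarrow\Hom_R(M_i^*,N')$, and since the index set $\{i\}$ is filtered (it is directed under inclusion of submodules) the filtered colimit of these injections is again an injection; this is the only place exactness of filtered colimits enters, and it is standard. One should also double-check that the identification $M\otimes_R N=\ilim{i}\Hom_R(M_i^*,N)$ in the hypothesis is functorial in $N$ (the statement of the corollary implicitly requires this for the square above to commute), which is reasonable to assume since the natural maps $M_i\otimes_R N\to\Hom_R(M_i^*,N)$ are functorial. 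With functoriality in hand the argument is essentially a diagram chase, so the real content is entirely carried by Theorem \ref{3.5T} and Lemma \ref{3.3T}, and the corollary is a short consequence.
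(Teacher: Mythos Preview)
Your proof is correct and follows the same route as the paper: the forward direction combines Theorem \ref{3.5T} with Lemma \ref{3.3T}, and the converse simply observes that $M\otimes_R-=\ilim{i}\Hom_R(M_i^*,-)$ is left exact. One small slip in the preliminaries: the equality $(\mathcal{M}_i^*)_{qc}=\mathcal{M}_i^*$ is false in general (by Proposition \ref{3.1} it would force $M_i$ to be finitely generated projective, and via \ref{reflex} would give $\mathcal M_{i,sch}=\mathcal M_i$), but you do not need it --- the identification $\mathcal{M}_{i,sch}=\mathcal{N}_i^*$ with $N_i:=M_i^*$ is immediate from the definition of $(-)_{sch}$, since $(\mathcal{M}_i^*)(R)=\Hom_R(M_i,R)=M_i^*$.
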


\begin{proof} $\Rightarrow)$ It is a consequence of \ref{3.5T} and \ref{3.3T}.

$\Leftarrow)$ $M\otimes -= \ilim{i} \Hom_{R}(M_i^*,-)$ is a left exact functor.
\end{proof} 

\subsection{From the category of affine $R$-schemes to the category of $X$-schemes} 

The reader can omit this subsection, no other section of this paper depends upon it. For simplicity, we have worked within the framework of the affine $R$-schemes. Briefly, let us see that we can work within the framework of the $X$-schemes.

Let $(X,O_X)$ be a Noetherian scheme. Given  a quasi-coherent sheaf, $F$, on $X$ (in the standard sense, see \cite[II 5.]{hartshorne}) we shall denote by $\mathcal F$ the functor from the category of quasi-compact and quasi-separated schemes over $X$ to the category of 
Abelian groups defined by ${\mathcal F}(Y):=(f^*F)(Y)$, for any quasi-compact and quasi-separated scheme $Y$ over $X$, $f\colon Y\to X$. Given a coherent sheaf $G$, we shall say that ${{\mathcal G}\,}^*:=\mathbb Hom_{ {\mathcal O}_X}({\mathcal G},{\mathcal O_X})$ is an ${\mathcal O}_X$-module variety. ${\mathcal F}$ and ${{\mathcal G}\,}^*$ are sheaves in the Zariski topos.

\begin{theorem} \label{elteorema} Let $(X,O_X)$ be a Noetherian scheme and ${F}$ a quasi-coherent module on $X$. $ F$ is a flat $ O_X$-module iff ${\mathcal F}$ is a direct limit of ${\mathcal O}_X$-module varieties.  
\end{theorem}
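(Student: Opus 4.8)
The plan is to bootstrap Theorem \ref{elteorema} from the affine case, Theorem \ref{T3.7}, by choosing a suitable affine open cover and gluing. First I would recall that flatness of a quasi-coherent sheaf $F$ on a Noetherian scheme $X$ is a local property: if $X=\bigcup_{k} U_k$ with $U_k=\Spec A_k$ affine, then $F$ is $O_X$-flat iff each $F(U_k)$ is a flat $A_k$-module. So I would aim to transfer the equivalence ``flat $\Leftrightarrow$ direct limit of module varieties'' across these affine pieces, observing that restricting $\mathcal F$ to the category of quasi-compact quasi-separated schemes over $U_k$ recovers exactly the functor attached to the $A_k$-module $F(U_k)$ in the sense of Section 3, and that restriction carries $O_X$-module varieties ${\mathcal G}^*$ to $\mathcal R$-module affine schemes over $A_k$ (here $\mathcal R$ is understood relative to $A_k$), since $G|_{U_k}$ is a coherent $A_k$-module and ${\mathcal G}^*|_{U_k}=\widetilde{(G(U_k))^*}$, using that the formation of $\mathbb Hom_{O_X}(-,O_X)$ commutes with restriction to affine opens for coherent sheaves.

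For the implication $(\Leftarrow)$, if $\mathcal F=\ilim{i}{\mathcal G_i}^*$ is a direct limit of $O_X$-module varieties, then restricting to each $U_k$ gives $\mathcal F|_{U_k}=\ilim{i}({\mathcal G_i}^*|_{U_k})$, a direct limit of $A_k$-module affine schemes (direct limits commute with the restriction functor, as they are computed objectwise), so by $(3)\Rightarrow(1)$ of Theorem \ref{T3.7} the $A_k$-module $F(U_k)$ is flat; hence $F$ is $O_X$-flat. For the implication $(\Rightarrow)$, the delicate direction, I would start from the affine construction: over each $U_k$, Theorem \ref{3.5T} gives $\mathcal F|_{U_k}=\ilim{i\in I_k}(\widetilde{F(U_k)_i})_{sch}$, where $\{F(U_k)_i\}$ ranges over finitely generated submodules of $F(U_k)$. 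The task is to replace these local direct systems by a single direct system of honest $O_X$-module varieties on $X$. The natural candidate is to let the index set be the directed set of coherent subsheaves $G\subseteq F$ (Noetherianity of $X$ guarantees $F$ is the filtered union of its coherent subsheaves, and $G\mapsto {\mathcal G}_{sch}$, where ${\mathcal G}_{sch}$ is globally defined by ${\mathcal G}_{sch}:=({\mathcal G}^{**}$-type construction via $S^\cdot({\mathcal G}^*(X))$, i.e. the relative $\Spec$ of the symmetric algebra of the dual), is functorial; one then shows the natural morphism $\ilim{G}{\mathcal G}_{sch}\to \mathcal F$ is an isomorphism by checking it on each affine $U_k$, where it reduces to the isomorphism of Theorem \ref{3.5T} after identifying the cofinal subsystem of those coherent $G$ whose sections over $U_k$ are the chosen finitely generated submodules.

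The main obstacle is precisely this last globalization step: verifying that the locally defined isomorphisms $\mathcal F|_{U_k}\cong\ilim{i}(\cdots)$ of Theorem \ref{3.5T} are compatible on overlaps $U_k\cap U_\ell$ and assemble into a global isomorphism of functors on $\mathbf{Sch}_X^{qcqs}$. The clean way to handle this is to avoid ad hoc gluing of the local systems and instead work intrinsically with the global directed set of coherent subsheaves $G\subseteq F$, proving (i) that ${\mathcal G}_{sch}$ is an $O_X$-module variety in the stated sense — this needs that ${\mathcal G}^*$ is a quasi-coherent sheaf when $G$ is coherent, the sheaf-theoretic analogue of Theorem \ref{121}, together with $({\mathcal G}^*)^*={\mathcal G}_{sch}$ — and (ii) that $\ilim{G}{\mathcal G}_{sch}=\mathcal F$, which, being a statement about sheaves in the Zariski topos, may be checked stalk-wise or on affine opens and there becomes exactly Theorem \ref{3.5T} applied to $F(U_k)$ over $A_k$ (using that every finitely generated submodule of $F(U_k)$ is the restriction of some coherent subsheaf of $F$, again by Noetherianity). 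I expect that once the affine statement Theorem \ref{3.5T} and the local nature of flatness, of ${\mathcal F}$, and of ${\mathcal G}^*$ are in hand, the argument is essentially formal, with the only real content being the sheaf-level version of ``${\mathbb M}^*$ is quasi-coherent'' for ${\mathbb M}$ an $O_X$-module variety.
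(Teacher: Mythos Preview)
Your proposal is correct and follows essentially the same route as the paper: the backward direction is handled by restricting to affine opens and invoking Theorem~\ref{T3.7}, and the forward direction uses the global directed system of coherent subsheaves $G\subseteq F$ together with the extension property (every finitely generated submodule of $F(U)$ is the restriction of a coherent subsheaf of $F$, \cite[Ex.~II~5.15]{hartshorne}) to reduce the verification of $\ilim{G}\mathcal G_{sch}\simeq \mathcal F$ to the affine statement Theorem~\ref{3.5T}. One small simplification relative to your write-up: the paper does not need a sheaf-level analogue of Theorem~\ref{121}; it simply defines $\mathcal G_{sch}$ as $\mathcal J^{\,*}$ where $J$ is the ordinary sheaf dual $\underline{\Hom}_{O_X}(G,O_X)$ (coherent because $G$ is coherent and $X$ Noetherian), so the quasi-coherence of the functorial dual is built in by construction rather than proved.
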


\begin{proof} $\Rightarrow)$ $F$ is equal to the direct limit of its coherent subsheaves, $ F=\ilim{i}  F_i$, and given an open set $U\subseteq X$ and a coherent subsheaf $ G$ of $F_{|U}$, there exists a coherent subsheaf $H\subseteq F$    such that $H_{|U}=G$ (see \cite[Ex. II 5.15]{hartshorne}). Given a coherent sheaf $I$, let $J:=I^*$. We shall denote $\mathcal I_{sch}=\mathcal J^*$. If $U=\Spec R\subseteq X$ is an affine open set, $M:=I(U)$ and $Y=\Spec {R'}$ is an affine scheme over $U$, then $\mathcal I_{sch}(Y)=\Hom_R(M^*,{R'})=\mathcal M_{sch}({R'})$.

Consider the natural morphism ${\mathcal F_i}\to \mathcal F_{i,sch}$, then the morphism
$$\Hom_{{\mathcal O}_X}(\mathcal F_{i,sch},{\mathcal F})\to \Hom_{{\mathcal O}_X}({\mathcal F_i},{\mathcal F})$$
is a bijection, by Lemma \ref{b}. Finally, the morphism
$$\ilim{i} \mathcal F_{i,sch}\to {\mathcal F}$$
is an isomorphism by Theorem \ref{3.5T}.

$\Leftarrow)$ Assume ${\mathcal F}=\ilim{i} {{\mathcal G}_i\,}^*$. Let $U=\Spec R\subseteq X$ be an affine open subset. 
Let $M:=F(U)$ and $N_i:=G_i(U)$. Then, $\mathcal M=\ilim{i} \mathcal N_i^*$. By \ref{T3.7}, $M$ is a flat $R$-module. Then, $F$ is flat.
\end{proof}

\section{Characterization of flat Mittag-Leffler modules}

\begin{lemma} \label{tonto} \begin{enumerate} \item Let $f\colon \mathcal N^*\to\mathcal M^*$ be  a morphism 
of $\mathcal R$-modules and let $f^*\colon \mathcal M\to\mathcal N$ be the dual morphism. Then,
$\Ker f=(\Coker f^*)^*$. Hence, $\Ker f$ is the module scheme associated with $\Coker f^*_R$, and
$f$ is a monomorphism iff $f^*$ is an epimorphism.

\item $\Hom_{\mathcal R}(\mathcal M^*,-)$ is a right exact funtor on the category of quasi-coherent modules.

\item Let $f_1\colon \mathcal N_1^* \to \mathcal N^*$ and $f_2\colon \mathcal N_2^*\to \mathcal N^*$ be two morphisms of $\mathcal R$-modules. Then, $\mathcal N_1^*\times_{\mathcal N^*} \mathcal N_2^*$ is an $\mathcal R$-module scheme associated with  an $R$-module and
$$(\mathcal N_1^*\times_{\mathcal N^*} \mathcal N_2^*)^*=\mathcal N_1\oplus_{\mathcal N} \mathcal N_2.$$

\end{enumerate}

\end{lemma}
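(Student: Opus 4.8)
The plan is to prove the three parts of Lemma~\ref{tonto} in order, since part (1) feeds directly into parts (2) and (3). For part (1), I would start from Theorem~\ref{prop4}, which gives $\Hom_{\R}(\mathcal N^*,\mathcal M^*)=\mathcal N\otimes_{\R}\mathcal M^*$, but more usefully I would work pointwise: for a commutative $R$-algebra $R'$, a morphism $f\colon\mathcal N^*\to\mathcal M^*$ restricts to $f_{|R'}$ and the relevant kernel is computed after base change, so it suffices to identify $\Ker f_R$ and then invoke reflexivity (\ref{reflex}) together with the fact that quasi-coherence and the $(-)^*$ construction commute with base change. Concretely, $f$ corresponds to $f^*\colon\mathcal M\to\mathcal N$ by \ref{trivial}; I would check that for each $R'$, an element $w\in\mathcal N^*(R')=\Hom_{R'}(N\otimes_R R',R')$ lies in $\Ker f_{R'}$ iff $w$ kills the image of $(f^*)_{R'}$, i.e.\ iff $w$ factors through $\Coker(f^*_{R'})=(\Coker f^*_R)\otimes_R R'$. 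That factorization identifies $(\Ker f)(R')$ with $\Hom_{R'}((\Coker f^*_R)\otimes_R R',R')=(\Coker f^*_R)^*{}(R')$, functorially in $R'$, giving $\Ker f=(\Coker f^*_R)^*$; in particular $\Ker f$ is the module scheme associated with $\Coker f^*_R$. The equivalence ``$f$ mono iff $f^*$ epi'' then follows because $\Ker f=0$ as a functor iff $(\Coker f^*_R)^*=0$ iff $\Coker f^*_R=0$ (using again that $(-)^*$ is faithful on quasi-coherent modules via the equivalence of categories).

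For part (2), I would argue that right exactness of $\Hom_{\R}(\mathcal M^*,-)$ on quasi-coherent modules is precisely a restatement of part (1) combined with \ref{prop4}. Given a short exact sequence $0\to\mathcal N'\to\mathcal N\to\mathcal N''\to 0$ of quasi-coherent modules (equivalently a short exact sequence $0\to N'\to N\to N''\to 0$ of $R$-modules), applying $\Hom_{\R}(\mathcal M^*,-)=M\otimes_R(-)$ by \ref{prop4} yields $M\otimes_R N'\to M\otimes_R N\to M\otimes_R N''\to 0$, which is exact since ordinary tensor product is right exact. So part (2) is essentially immediate once \ref{prop4} is in hand; the only thing to spell out is that the identification $\Hom_{\R}(\mathcal M^*,\mathcal N)=M\otimes_R N$ is natural in $\mathcal N$, which is built into the statement of \ref{prop4}.

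For part (3), the fiber product $\mathcal N_1^*\times_{\mathcal N^*}\mathcal N_2^*$ is computed pointwise, so I would first write it as the kernel of the difference morphism $(f_1,-f_2)\colon\mathcal N_1^*\oplus\mathcal N_2^*\to\mathcal N^*$. Since $\mathcal N_1^*\oplus\mathcal N_2^*=(\mathcal N_1\oplus\mathcal N_2)^*=(\overline{\mathcal N_1\oplus \mathcal N_2})^*$ is again a module scheme associated with an $R$-module, part (1) applies: the kernel of $\mathcal N_1^*\oplus\mathcal N_2^*\to\mathcal N^*$ is $(\Coker[(f_1,-f_2)^*_R])^*$, hence is itself a module scheme associated with an $R$-module, and its dual is $\Coker[(f_1^*,-f_2^*)\colon\mathcal N\to\mathcal N_1\oplus\mathcal N_2]$. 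Identifying that cokernel with the pushout $\mathcal N_1\oplus_{\mathcal N}\mathcal N_2$ is the standard fact that a pushout of modules is the cokernel of $N\xrightarrow{(f_1^*,-f_2^*)}N_1\oplus N_2$; combined with the equivalence of categories for quasi-coherent modules this gives $(\mathcal N_1^*\times_{\mathcal N^*}\mathcal N_2^*)^*=\mathcal N_1\oplus_{\mathcal N}\mathcal N_2$.

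The main obstacle I anticipate is the bookkeeping in part (1): one must be careful that the pointwise kernel $(\Ker f)(R')$ really equals $(\Coker f^*_R)^*(R')$ and not merely $(\Coker f^*_{R'})^*(R')$ — i.e.\ that forming the cokernel of $f^*$ commutes with the base change $R\to R'$ before dualizing. This is true because $f^*$ is a morphism of quasi-coherent modules and cokernels of such commute with tensor (base change), as recorded in the preliminaries, but it is the step where a careless argument would go wrong. Everything else is formal manipulation via \ref{trivial}, \ref{prop4}, \ref{reflex} and the equivalence of categories.
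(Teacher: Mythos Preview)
Your proposal is correct and matches the paper's approach: the paper simply declares (1) obvious, deduces (2) from right exactness of $M\otimes_R-$ via Theorem~\ref{prop4}, and for (3) writes the pushout $\mathcal N_1\oplus_{\mathcal N}\mathcal N_2$ as the cokernel of $\mathcal N\to\mathcal N_1\oplus\mathcal N_2$, $n\mapsto(f_1^*(n),-f_2^*(n))$, then dualizes using reflexivity. Your pointwise argument for (1) correctly unpacks what the paper leaves implicit; the only imprecision is that the implication $\mathcal V^*=0\Rightarrow V=0$ follows from reflexivity $\mathcal V=\mathcal V^{**}$ (Theorem~\ref{reflex}) rather than from the equivalence of categories as you phrased it.
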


 \begin{proof} (1) It is obvious.

(2) It is an immediate consequence of that $M\otimes_R-$ is a right exact functor on the category of $R$-modules.

 (3)  $\mathcal N_1\oplus_{\mathcal N} \mathcal N_2$ is a quasi-coherent $\mathcal R$-module because it is equal to the cokernel of the morphism $\mathcal N\to \mathcal N_1\oplus\mathcal N_2$, $n\mapsto (f_1^*(n),-f^*_2(n))$. Then, $\mathcal N_1^*\times_{\mathcal N^*} \mathcal N_2^*=
 (\mathcal N_1\oplus_{\mathcal N} \mathcal N_2)^*$ is a module scheme  and
 $$\mathcal N_1\oplus_{\mathcal N} \mathcal N_2=(\mathcal N_1\oplus_{\mathcal N} \mathcal N_2)^{**}=
(\mathcal N_1^*\times_{\mathcal N^*} \mathcal N_2^*)^*.$$

 \end{proof}

 \begin{definition}  \cite[Chap. 2 Def. 3.]{Raynaud} An $R$-module $M$ is said to be a flat Mittag-Leffler module  if $M$ is the direct limit of free finite $R$-modules $\{L_i\}$ such that the inverse system $\{L_i^*\}$
 satisfies the usual Mittag-Leffler condition (that is, for each $i$ there exists $j\geq  i$ such that for $k\geq j$ we have $\Ima(L_k^*\to L_i^*)=\Ima(L_j^*\to L_i^*)$).
 
\end{definition}

\begin{theorem} \label{ML} Let $M$ be an $R$-module. The following statements are equivalent

\begin{enumerate}
\item $M$ is a flat Mittag-Leffler module.

\item $\mathcal M$ is equal to a direct limit of  $\R$-submodule schemes associated with  modules.

\item Any morphism $\mathcal N^*\to \mathcal M$ factors through an $\R$-submodule scheme of $\mathcal M$ associated with an $R$-module.

\item Any morphism $\mathcal R^n\to \mathcal M$ factors through an $\R$-submodule scheme of $\mathcal M$ associated with an $R$-module.

\item The kernel of any morphism $\mathcal N^*\to \mathcal M$ 
is a module scheme associated with an $R$-module.
 
\item The kernel of any morphism $\mathcal R^n\to \mathcal M$ 
is a module scheme associated with an $R$-module.

\end{enumerate}
\end{theorem}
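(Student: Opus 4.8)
The strategy is to prove the cycle of implications $(1)\Rightarrow(3)\Rightarrow(4)\Rightarrow(6)\Rightarrow(5)\Rightarrow(2)\Rightarrow(1)$, using the dual-module machinery already set up. For $(1)\Rightarrow(3)$, write $M=\ilim{i} L_i$ with the $L_i$ free finite and $\{L_i^*\}$ satisfying Mittag--Leffler. Given $f\colon \mathcal N^*\to\mathcal M$, by Lemma \ref{a} it factors through some $\mathcal L_j$ (as $\mathcal M=\ilim{i}\mathcal L_i$, use \ref{L5.11} as in \ref{T3.7}); the Mittag--Leffler condition lets me replace the image in $\mathcal L_j$ by a stable image $W_j:=\Ima(L_k^*\to L_j^*)^*$-type submodule, and the point is that this stable image, viewed inside $\mathcal M$, is an $\R$-submodule scheme associated with an $R$-module — precisely because taking images of the transition maps $L_k^*\to L_j^*$ stabilizes, so the corresponding subfunctor of $\mathcal M$ is the module scheme attached to $\Ima[L_j\to L_k]^{**}$ or similar. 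The implications $(3)\Rightarrow(4)$ and $(5)\Rightarrow(6)$ are trivial specializations ($\mathcal R^n=(\mathcal R^n)^*$).

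The implication $(4)\Rightarrow(6)$ (and likewise $(3)\Rightarrow(5)$) is where Lemma \ref{tonto}(3) does the work: if $f\colon\mathcal R^n\to\mathcal M$ factors through a submodule scheme $\mathcal W^*\hookrightarrow\mathcal M$ with $W$ an $R$-module, then $\Ker f=\Ker[\mathcal R^n\to\mathcal W^*]$, and a morphism $\mathcal R^n\to\mathcal W^*$ has kernel a module scheme associated with an $R$-module by Lemma \ref{tonto}(1) (it is $(\Coker[\mathcal W\to R^n])^*$). For $(6)\Rightarrow(5)$ one reduces a general $\mathcal N^*\to\mathcal M$ to the $\mathcal R^n$ case via Lemma \ref{a}: that lemma gives $\mathcal N^*\to\mathcal L=\mathcal R^r\to\mathcal M$, so $\Ker[\mathcal N^*\to\mathcal M]$ sits in an exact sequence with $\Ker[\mathcal R^r\to\mathcal M]$ and a piece that is again a module scheme associated with a module, using that $\mathcal N^*\times_{\mathcal M}$ of things stays in the class by \ref{tonto}(3); I expect to need that the class of module schemes associated with $R$-modules is stable under fibered products over such, which is exactly \ref{tonto}(3). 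The step $(5)\Rightarrow(2)$: given $x\in M$, pick $\mathcal R\to\mathcal M$, $1\mapsto x$; its kernel $\mathcal I^*$ is a module scheme associated with an $R$-module, so $\Ima[\mathcal R\to\mathcal M]=(\mathcal R/\mathcal I^*)$ is a quotient that, being both a quotient of $\mathcal R$ and a subobject of $\mathcal M$, should be identified (via \ref{tonto}(1) applied to the dual picture) with a module scheme $\mathcal W_x^*$ associated with an $R$-module; then $\mathcal M$ is the direct limit of these $\mathcal W_x^*$ over finite subsets, giving $(2)$.

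Finally $(2)\Rightarrow(1)$: write $\mathcal M=\ilim{i}\mathcal W_i^*$ with each $\mathcal W_i^*\hookrightarrow\mathcal W_j^*$ a monomorphism and each a submodule scheme of $\mathcal M$. By Theorem \ref{4.8z}, $M$ is flat, so each $\mathcal W_i^*$ receives compatible maps and, $M$ being flat, we can interpolate finitely generated free modules $L$ with $\mathcal W_i^*\to\mathcal L\to\mathcal M$ by \ref{e6.6}; the monomorphism condition on the $\mathcal W_i^*\to\mathcal W_j^*$, translated by \ref{tonto}(1) into an epimorphism condition $W_j\twoheadrightarrow W_i$ on duals, forces the associated system $\{L_i^*\}$ of the interpolating free modules to have stabilizing images — this is the Mittag--Leffler condition. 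Concretely, one reindexes so that $M=\ilim{i}L_i$ with $L_i$ free finite and checks that $\Ima(L_k^*\to L_i^*)$ stabilizes because it is sandwiched between the images of the fixed maps $W_k\to W_i$ coming from the subscheme structure.

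**Main obstacle.** The hard part will be $(1)\Leftrightarrow(2)$, i.e.\ correctly matching the Mittag--Leffler condition on $\{L_i^*\}$ with the condition that the transition maps $\mathcal W_i^*\to\mathcal W_j^*$ in a presentation of $\mathcal M$ are monomorphisms. One direction needs to produce, from a Mittag--Leffler system of free modules, an honest direct system of \emph{submodule schemes} of $\mathcal M$ (so the dualization and image-stabilization must be done carefully so that the resulting subfunctors of $\mathcal M$ are genuinely of the form $\mathcal W^*$ with $W$ an $R$-module, not merely dual functors); the other needs to extract a Mittag--Leffler system of free modules from the monomorphism condition, which requires the flatness interpolation of \ref{e6.6} and a careful cofinality/reindexing argument to pass from the $\mathcal W_i^*$ to the $L_i$. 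The rest of the equivalences are formal consequences of Lemmas \ref{a} and \ref{tonto}.
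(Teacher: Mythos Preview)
Your cycle $(1)\Rightarrow(3)\Rightarrow(4)\Rightarrow(6)\Rightarrow(5)\Rightarrow(2)\Rightarrow(1)$ is a reasonable reorganization of the paper's web of implications, and the steps $(3)\Rightarrow(4)$, $(4)\Rightarrow(6)$, $(6)\Rightarrow(5)$ are handled correctly and match the paper's arguments (the paper does $(3)\Rightarrow(5)$ and $(6)\Rightarrow(5)$ by the same mechanism you describe, via Lemma~\ref{tonto}). Your $(1)\Rightarrow(3)$ is also essentially the paper's $(1)\Rightarrow(2)\Rightarrow(3)$.

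There is, however, a genuine gap in your $(5)\Rightarrow(2)$. You write that $\Ima[\mathcal R\to\mathcal M]=\mathcal R/\mathcal I^*$ ``being both a quotient of $\mathcal R$ and a subobject of $\mathcal M$, should be identified (via \ref{tonto}(1)) with a module scheme $\mathcal W_x^*$''. But Lemma~\ref{tonto}(1) does not give this. What it gives is: from $\mathcal I^*=\Ker f$ you get a surjection $R\twoheadrightarrow I$, hence an $R$-module $W:=\Ker[R\to I]$ and a module scheme $\mathcal W^*$. The functor-theoretic image $\mathcal R/\mathcal I^*$ is \emph{not} in general equal to $\mathcal W^*$ (the map $\mathcal R\to\mathcal W^*$ need not be an epimorphism of functors), and conversely the induced map $g\colon\mathcal W^*\to\mathcal M$ is not obviously a monomorphism. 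The paper's $(5)\Rightarrow(3)$ confronts exactly this point: one must \emph{re-apply} $(5)$ to $g$ to get $\Ker g=\mathcal H^*$ for some $R$-module $H$, then use the fiber-product identity $\mathcal N^*\times_{\mathcal W^*}\mathcal H^*=\Ker f=\mathcal V^*$ and dualize via \ref{tonto}(3) to force $H=0$. Without this second application of $(5)$ and the accompanying diagram chase, you have not produced a \emph{sub}module scheme of $\mathcal M$, only a module scheme mapping to $\mathcal M$.

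Your $(2)\Rightarrow(1)$ is also more complicated than necessary. The paper goes $(2)\Rightarrow(3)$ trivially (any $\mathcal N^*\to\mathcal M=\ilim{}\mathcal W_i^*$ factors through some $\mathcal W_i^*$ by \ref{L5.11}), and then $(3)\Rightarrow(1)$: given $M=\ilim{i}L_i$ by Govorov--Lazard, factor $\mathcal L_i\to\mathcal M$ through a submodule scheme $\mathcal W^*\hookrightarrow\mathcal M$ by $(3)$, then factor $\mathcal W^*\hookrightarrow\mathcal M$ through some $\mathcal L_j$ by \ref{L5.11}; the composite $\mathcal W^*\to\mathcal L_k$ stays mono for all $k\ge j$, so dually $L_k^*\twoheadrightarrow W$ and hence $\Ima[L_k^*\to L_i^*]=\Ima[W\to L_i^*]$ is independent of $k\ge j$. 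Your direct $(2)\Rightarrow(1)$ via ``sandwiching'' and reindexing would amount to reproving $(2)\Rightarrow(3)$ inside the argument; it is cleaner to separate the two steps.
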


\begin{proof} $ (1) \Rightarrow (2)$ Write $M=\ilim{i} L_i$, where
the inverse system $\{L_i^*\}$ satisfies the Mittag-Leffler condition. Obviously, $M$ is a flat $R$-module and $\mathcal M^*=\plim{i} \mathcal L_i^*$. 
Let $N_i:=\Ima(L_k^*\to L_i^*)$, for $k>>i$. 
Obviously, the morphisms $N_i\to N_j$ are epimorphisms. The morphism $L_k^*\to N_i$ is an epimorphism for every $k>>i$, then the morphism
$\mathcal N_i^*\to \mathcal L_k$ is a monomorphism for every $k>>i$. Taking direct limits we have the morphisms 
$$\mathcal M=\ilim{i}\mathcal L_i \to \ilim{i}\mathcal N_i^*\hookrightarrow \mathcal M$$
Then, $\mathcal M=\ilim{i}\mathcal N_i^*$. 

$(2) \Rightarrow (3)$ It is a consequence of \ref{L5.11}.

$(3)\Rightarrow (1)$ Let $\mathcal V_1^*,\mathcal V_2^*$ be two $\mathcal R$-submodule schemes of $\mathcal M$. The morphism $\mathcal V_1^*\oplus \mathcal V_2^*\to \mathcal M$, $(w_1,w_2)\mapsto w_1+w_2$ factors through an $\mathcal R$-submodule scheme $\mathcal V^*$ of $\mathcal M$. Then, 
$\mathcal V_1^*,\mathcal V_2^*\subseteq \mathcal V^*$. Any morphism $\mathcal R^n\to \mathcal M$ factors through a submodule scheme of $\mathcal M$ asociated with an $\mathcal R$-module. Then, $\mathcal M$ is the direct limit of its $\mathcal R$-submodule schemes associated with modules. Hence, $M$ is flat by \ref{T3.7}.  Write $M=\ilim{i} L_i$, where $L_i$ is a finite free module for any $i$. We have to prove that
the inverse system $\{L_i^*\}$ satisfies the Mittag-Leffler condition.
The morphism $\mathcal L_i\to \mathcal M$ factors through a monomorphism $\mathcal W^*\hookrightarrow \mathcal M$. By \ref{L5.11}, there exists $j>$ such that $\mathcal W^*\to \mathcal M$ factors through a morphism $\mathcal W^*\to\mathcal L_j$, which is a monomorphism because $\mathcal W^*\hookrightarrow \mathcal M$ is a monomorphism. Likewise, the morphism $\mathcal W^*\to\mathcal L_k$ is a monomorphism for any $k\geq j$. Dually we have the morphisms
$$L_k^*\to W\to L_i^*.$$
and $L_k^*\to W$ is an epimorphism by \ref{tonto}. Then, $\Ima[L_k^*\to L_i^*]=\Ima[W\to L_i^*]$, for any $k\geq j$.

$(3) \Rightarrow (5)$ Let $f\colon \mathcal N^*\to \mathcal M$ be a morphism of $\mathcal R$-modules. $\Ima f$ is isomorphic to an $\mathcal R$-submodule of an $\mathcal R$-module scheme $\mathcal V^*\subseteq \mathcal M$. Consider the obvious morphisms $\mathcal N^*\to \Ima f\subseteq \mathcal V^*\subseteq \mathcal M$.  Then,
$$\Ker f=\Ker[\mathcal N^*\to \mathcal V^*]$$
is a module scheme associated with an $R$-module, by \ref{tonto}.

$(5) \Rightarrow (3)$ Let $f\colon \mathcal N^*\to\mathcal M$ a morphism of $\mathcal R$-modules.
Let $\mathcal V^*=\Ker f$. Consider the inclusion morphism $i\colon \mathcal V^*\hookrightarrow \mathcal N^*$. The dual morphism $i^*\colon \mathcal N\to \mathcal V$ is an epimorphism, by \ref{tonto}. Let $W:=\Ker i^*_R$. We have the exact sequence of morphisms
$$0\to W\to N\to V\to 0.$$
Then, $0\to \mathcal V^*\to \mathcal N^*\to \mathcal W^*\to 0$ is an exact sequence in the category of
module schemes associated with  $R$-modules. 
The dual morphism $f^*\colon \mathcal N^*\to\mathcal M$ factors through a morphism $g\colon \mathcal W^*\to \mathcal M$, because $\Hom_{\mathcal R}(-,\mathcal M)$ is a right exact functor on the category of module schemes associated with  $R$-modules, by \ref{tonto} (2).
Let $\mathcal H^*:=\Ker g$.
Observe that $$\mathcal N^*\times_{\mathcal W^*}  \mathcal H^*=\mathcal N^*\times_{\mathcal W^*}  \mathcal \Ker g=\Ker f =\mathcal V^*$$
Dually, $\mathcal N\oplus_{\mathcal W} \mathcal H=\mathcal V$, by \ref{tonto}, and 
$0= (\mathcal N\oplus_{\mathcal W} \mathcal H)/\mathcal V= \mathcal W\oplus_{\mathcal W} \mathcal H=\mathcal H$. Then, $\mathcal H=0$ and 
$g$ is a monomorphism.

$(3) \Rightarrow (4)$ It is obvious.

$(4) \Rightarrow (3)$ It is a consequence of the fact that any morphism $\mathcal N^*\to\mathcal M$ factors through a
morphism $\mathcal R^n\to \mathcal M$, by \ref{a}.

$(5) \Rightarrow (6)$ It is obvious.

$(6)\Rightarrow (5)$. Any morphism $f\colon \mathcal N^*\to\mathcal M$ factors through a
morphism $g\colon \mathcal R^n\to \mathcal M$, by \ref{a}. $\Ker f=\Ker g\times_{\mathcal R^n} \mathcal N^*$, which is a module scheme by \ref{tonto}.

\end{proof}
%
%The equivalence between of $(1)$ and $ (6)$ in \ref{ML} has the following immediate consequence.
%
%\begin{corollary} Let $M$ be an $R$-module. The following statements are equivalent
%
%\begin{enumerate}
%\item $M$ is a flat Mittag-Leffler module. 
%
%\item For any $R$-module $P$ and any $\sum_{i=1}^n m_i\otimes p_i\in M\otimes_R P$, $\sum_{i=1}^n m_i\otimes p_i=0$ iff
%$\sum_{i=1}^n w(m_i)\cdot p_i=0$, for every $w\in M^*$. \end{enumerate}\end{corollary}
%
%\begin{proof} FALSE The kernel of a morphism $f=(m_1,\ldots,m_n)\in \Hom_{\mathcal R}( \mathcal R^n,\mathcal M)=M^n$ is a module scheme associated with an $R$-module  iff  $\Ker f=\mathcal K^*$, where $K=\Coker f^*_R$.
%
%The sequence of morphisms $$0\to \mathcal K^*\to \mathcal R^n\overset{f}\to \mathcal M$$
%is exact iff the sequence of morphisms
%$$\aligned 0\to \Hom_R(K,P)\to \Hom_R(R^n,P) & \to M\otimes_R P\\ (p_i) & \mapsto \sum_i m_i\otimes p_i\endaligned$$
%is exact for any $R$-module $P$. 
%
%$K=R^n/\langle w(m_i)\rangle_{w\in M^*}$, 
%$\Hom_R(K,P)=\{(p_i)\in P^n\colon \sum_iw(m_i)p_i=0$, for any $w\in M^*\}$
%and $\Ker[\Hom_R(R^n,P) \to M\otimes_R P]=\{(p_i)\in P^n\colon \sum_{i=1}^n m_i\otimes p_i=0\}$. 
%
%We complete the proof of this proposition by  Theorem \ref{ML} (6).
%
%\end{proof}
%

\begin{corollary} \label{6.3}  Let $M$ be an $R$-module. The following statements are equivalent

\begin{enumerate}

\item $M$ is a flat Mittag-Leffler module.

\item For any $R$-module $N$ and  any morphism $f\colon \mathcal N^*\to \mathcal M$ there exists the smallest $\mathcal R$-submodule scheme $\mathcal W^*\subseteq \mathcal M$ such that $f$ factors through $\mathcal W^*$.

\item For any morphism $f\colon \mathcal R^n\to \mathcal M$ there exists the smallest $\mathcal R$-submodule scheme $\mathcal W^*\subseteq \mathcal M$ such that $f$ factors through $\mathcal W^*$.

\item For any free $R$-module $R^n$ and any $x\in R^n\otimes M$ there exist a submodule $W\overset i\subseteq R^n$ and $x'\in W\otimes_R M$ satisfying: \begin{enumerate} \item For any non-zero morphism $\phi\colon W\to V$, $(\phi\otimes Id)(x')\neq 0$ (where the morphism $\phi\otimes Id\colon W\otimes M\to V\otimes M$ is defined by $(\phi\otimes Id)(w\otimes m):=\phi(w)\otimes m$).

\item $(i\otimes Id)(x')=x$.\end{enumerate}

\item  For any $R$-module $N$ and  any morphism $f\colon \mathcal M^*\to \mathcal N$ there exists the smallest pair $(\mathcal N',f')$, where $N'$ is a submodule of $N$ and $f'\colon \mathcal M^*\to \mathcal N'$ a morphism of $\mathcal R$-modules, such that $f$ is the composite morphism $\mathcal M^*\overset{f'}\to \mathcal N'\to\mathcal N$.

\item For any $R$-module $N$ and any $x\in M\otimes_RN$, there exists the smallest pair  $(N',x')$ , where $N'$ is a submodule of $N$ and  $x'\in M\otimes_R N'$, such that  $x'$ maps to $x$ through the morphism $M\otimes_R N' \to M\otimes_R N$.

\end{enumerate}

\end{corollary}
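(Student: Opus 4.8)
The six conditions are sharpenings of the corresponding items of Theorem~\ref{ML}: (2) and (3) replace ``$f$ factors through \emph{some} submodule scheme'' by ``through a \emph{smallest} one,'' (5) and (6) are the source--side duals, and (4), (6) are the element--wise reformulations of (3), (5) obtained from the natural identifications $N\otimes_R M=\Hom_{\mathcal R}(\mathcal N^*,\mathcal M)$ (Theorem~\ref{prop4} at $R$) and, dually, $M\otimes_R N=\Hom_{\mathcal R}(\mathcal M^*,\mathcal N)$. Accordingly, the plan is to prove the cycle $(1)\Rightarrow(2)\Rightarrow(3)\Rightarrow(1)$, and then to read off (4), (5), (6) by formal manipulations with \ref{prop4}, \ref{trivial} and \ref{tonto}.

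The easy edges come first. $(2)\Rightarrow(3)$ is the specialisation $\mathcal N^*=\mathcal R^n$ (note $(\mathcal R^n)^*=\mathcal R^n$). For $(3)\Rightarrow(1)$: (3) asserts in particular that every morphism $\mathcal R^n\to\mathcal M$ factors through \emph{some} $\mathcal R$-submodule scheme of $\mathcal M$, which is exactly item \ref{ML}.4; hence (1) by Theorem~\ref{ML}. The substantive implication is $(1)\Rightarrow(2)$. Fix $f\colon\mathcal N^*\to\mathcal M$ and let $\Sigma$ be the collection of $\mathcal R$-submodule schemes $\mathcal W^*\subseteq\mathcal M$ (associated with $R$-modules) through which $f$ factors; by \ref{ML}.3 it is non-empty. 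First, $\Sigma$ is closed under finite intersection: for $\mathcal W_1^*,\mathcal W_2^*\in\Sigma$ the fibre product $\mathcal W_1^*\times_{\mathcal M}\mathcal W_2^*$ is the kernel of $\mathcal W_1^*\oplus\mathcal W_2^*=(\mathcal W_1\oplus\mathcal W_2)^*\to\mathcal M$, $(w_1,w_2)\mapsto w_1-w_2$, hence is a submodule scheme associated with an $R$-module by \ref{ML}.5, and $f$ factors through it because its two factorisations through $\mathcal W_1^*$ and $\mathcal W_2^*$ agree inside $\mathcal M$. Next I produce a least element of $\Sigma$. Pick any $\mathcal V^*\in\Sigma$, write $f=(\mathcal V^*\hookrightarrow\mathcal M)\circ\tilde f$, and let $\tilde f^*\colon\mathcal V\to\mathcal N$ be the dual morphism. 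Every $\mathcal R$-submodule scheme of $\mathcal V^*$ is of the form $(\mathcal V/U)^*=\Ker(\mathcal V^*\to\mathcal U^*)$ for a submodule $U$ of the $R$-module $V$, and $U_1\supseteq U_2$ yields $(\mathcal V/U_1)^*\subseteq(\mathcal V/U_2)^*$; moreover $f$ factors through $(\mathcal V/U)^*$ iff the composite $\mathcal U\hookrightarrow\mathcal V\xrightarrow{\tilde f^*}\mathcal N$ vanishes — use \ref{trivial} to rewrite $\Hom_{\mathcal R}(\mathcal N^*,\mathcal U^*)$ as $\Hom_{\mathcal R}(\mathcal U,\mathcal N)$ — that is, iff $U\subseteq\Ker\tilde f^*_R$. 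Hence the smallest member of $\Sigma$ contained in $\mathcal V^*$ is $\mathcal W^*:=(\mathcal V/\Ker\tilde f^*_R)^*=(\Ima\tilde f^*_R)^*$, with $W:=\Ima\tilde f^*_R$ a submodule of $N$. Since $\Sigma$ is downward directed, $\mathcal W^*\cap\mathcal V'^*=\mathcal W^*$ for every $\mathcal V'^*\in\Sigma$, so $\mathcal W^*\subseteq\mathcal V'^*$ and $\mathcal W^*$ is the least element of $\Sigma$.

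The construction just made also delivers the remaining items, for it exhibits, for each $f\colon\mathcal N^*\to\mathcal M$, the least submodule scheme in the form $\mathcal W^*$ with $W$ a canonical submodule of $N$ and a factorisation $\mathcal N^*\to\mathcal W^*\hookrightarrow\mathcal M$ whose first arrow is the dual of $W\hookrightarrow N$. Transporting this through $N\otimes_R M=\Hom_{\mathcal R}(\mathcal N^*,\mathcal M)$ (Theorem~\ref{prop4}): a morphism $\mathcal R^n\to\mathcal M$ is an element $x\in R^n\otimes_R M$, its factorisation becomes a pair $(W,x')$ with $W\overset i\subseteq R^n$, $x'\in W\otimes_R M$ and $(i\otimes\mathrm{Id})(x')=x$, and one checks that condition (4)(a) on $(W,x')$ translates exactly into ``$\mathcal W^*$ is the smallest $\mathcal R$-submodule scheme of $\mathcal M$ through which $f_x$ factors,'' flatness of $M$ (which both (3) and (4) entail through Theorem~\ref{ML}) being used to identify ``smallest submodule scheme'' with ``smallest submodule $W\subseteq R^n$ with $x\in W\otimes_R M$.'' This gives $(3)\Leftrightarrow(4)$. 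Dually, \ref{trivial} gives $\Hom_{\mathcal R}(\mathcal M^*,\mathcal N)=\Hom_{\mathcal R}(\mathcal N^*,\mathcal M)$, under which the factorisation $\mathcal N^*\to\mathcal W^*\hookrightarrow\mathcal M$ (with $W\subseteq N$) corresponds to a factorisation $\mathcal M^*\to\mathcal W\hookrightarrow\mathcal N$; the least $\mathcal W^*$ then yields the smallest pair $(\mathcal N',f')$ of (5), so $(3)\Leftrightarrow(5)$; and $(5)\Leftrightarrow(6)$ is again the translation through $M\otimes_R N=\Hom_{\mathcal R}(\mathcal M^*,\mathcal N)$ (for general $N$ one first replaces $N$ by the finitely generated submodule spanned by the finitely many terms of $x$, then proceeds as before).

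The one genuine obstacle is the ``least element'' step inside $(1)\Rightarrow(2)$: the $\mathcal R$-submodule schemes through which $f$ factors form, \emph{a priori}, only a downward-directed family of subfunctors, and one cannot simply intersect all of them. The point is to cut down to a single fixed $\mathcal V^*\in\Sigma$, after which every smaller member of $\Sigma$ has the shape $(\mathcal V/U)^*$ for a submodule $U$ of the honest $R$-module $V$; the reflexivity of $(-)^*$ (Proposition~\ref{trivial}, together with \ref{tonto}) then pins the minimum down as $(\Ima\tilde f^*_R)^*$. Everything else is bookkeeping with the adjunctions \ref{prop4}, \ref{trivial} and the flatness of $M$, which each of the six conditions forces via Theorem~\ref{ML}.
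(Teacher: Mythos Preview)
Your cycle $(1)\Rightarrow(2)\Rightarrow(3)\Rightarrow(1)$ is correct and close in spirit to the paper's argument; your use of downward-directedness of $\Sigma$ (via the fibre-product kernel and \ref{ML}.5) to pass from ``smallest inside a fixed $\mathcal V^*$'' to ``globally smallest'' is a clean variant of the paper's more implicit step. The translation $(3)\Leftrightarrow(4)$ is also sound: condition (4)(a) tested with $V=S$ and $\phi=w\in\Hom_R(W,S)=\mathcal W^*(S)$ says exactly that $f_{x'}(w)=(\phi\otimes\mathrm{Id})(x')\neq 0$ whenever $w\neq 0$, so $\mathcal W^*\to\mathcal M$ is a monomorphism and \ref{ML}.4 applies.

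The genuine gap is your ``so $(3)\Leftrightarrow(5)$'' by duality. Under $\Hom_{\mathcal R}(\mathcal M^*,\mathcal N)\cong\Hom_{\mathcal R}(\mathcal N^*,\mathcal M)$, a factorisation $\mathcal M^*\to\mathcal W\hookrightarrow\mathcal N$ transposes to $\mathcal N^*\to\mathcal W^*\to\mathcal M$, but the second arrow need not be a monomorphism; so the two families of factorisations are not in bijection, and the passage from (5) back to (3) is not formal. The paper handles $(5)\Rightarrow(1)$ in two substantive steps you omit. First, (5) alone already forces $M$ flat: if $N_1\hookrightarrow N_2$ and $g\colon\mathcal M^*\to\mathcal N_1$ has $f\circ g=0$, then the smallest submodule through which $f\circ g$ factors is $0$, hence the smallest submodule through which $g$ factors is $0$, so $g=0$; combined with right-exactness (\ref{tonto}.2) this makes $M\otimes_R-$ exact. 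Second, with flatness in hand, one shows that for the minimal $N'$ the map $\mathcal N'^*\to\mathcal M$ is a monomorphism: any $w\in\Ker(\mathcal N'^*\to\mathcal M)(S)=\Hom_{\mathcal R}(\mathcal N',\mathcal S)$ satisfies $w\circ f'=0$, and by flatness $f'$ then factors through the quasi-coherent module associated with $\Ker w_R\subseteq N'$, forcing $\Ker w_R=N'$ by minimality, i.e.\ $w=0$. Both steps are missing from your ``bookkeeping'' paragraph; without them the implication $(5)\Rightarrow(1)$ (and hence the equivalence with (6)) is not established.
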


\begin{proof} $(1) \Rightarrow (2)$ Let $g\colon M_1\to M_2$ be a morphism of $R$-modules. There exists a biggest quotient module of $M_1$ such that $g$ factors through it: $M/\Ker g$. Dually, given a morphism
$h\colon \mathcal M_2^*\to \mathcal M_1^*$ of $\mathcal R$-modules, there exists the smallest $\R$-submodule scheme of $\mathcal M_1^*$ such that $h$ factors through it. Given two submodule schemes $\mathcal W_1^*,\mathcal W_2^*\subset \mathcal M$, there exists a submodule scheme $\mathcal W^*\subset \mathcal M$ that contains them: a  submodule scheme $\mathcal W^*\subset \mathcal M$ that contains $\Ima[\mathcal W_1^*\oplus\mathcal W_2^*\to \mathcal M]$.
By \ref{ML} (3), we easily finish the proof.

$(2) \Rightarrow (1)$ It is an immediate consequence of \ref{ML} (3). 

$(2)\Rightarrow (3)$ It is obvious.

$(3)\Rightarrow (1)$   It is an immediate consequence of \ref{ML} (4). 

$(1) \Rightarrow (5)$ Let $f\colon \mathcal M^*\to\mathcal N$ be a morphism of $\R$-modules. Consider the dual morphism $f^*\colon \mathcal N^*\to \mathcal M$. 
$\Ker f^*$ is a module scheme,  by \ref{ML} (5).  
Write $\Ker f^*=\mathcal V^*$ and let  $i\colon\mathcal V^*\hookrightarrow \mathcal N^*$ be the inclusion morphism. $\mathcal V^*$ is the biggest submodule scheme of $\mathcal N^*$ such that 
$f^*\circ i=0$. Dually, $\mathcal V$ is the biggest quasi-coherent quotient module of $\mathcal N$ such that $i^*\circ f=0$. Then, $N'=\Ker i^*_R$ is the smallest submodule of $N$ such that $f$ factors through $\mathcal N'$.

 $(5) \Rightarrow (1)$ Let $f\colon \mathcal N_1\to \mathcal N_2$ be a morphism of $R$-modules
 and assume that $f_R$ is injective. 
 Let $g\colon \mathcal M^*\to \mathcal N_1$ be a morphism of $\mathcal R$-modules.
 If $f\circ g=0$,  $0$ is the smallest module such that $f\circ g$ factors through it, 
then  $0$ is the smallest module such that $g$ factors through it, 
 that is, $g=0$. By \ref{tonto} (2), $\Hom_{\mathcal R}(\mathcal M^*,-)$ is an exact functor on the category of quasi-coherent modules.
 
 Let $N'\subseteq N$ be the smallest submodule such that
 $f$ factors through a morphism $f'\colon \mathcal M^*\to \mathcal N'$. By \ref{ML} (3), it is sufficient to prove that
 $f'^*\colon \mathcal N'^*\to \mathcal M$ is a monomorphism. Let $w\in \Ker f'^*(S)\subseteq \mathcal N'^*(S)=\Hom_{\mathcal R}(\mathcal N',\mathcal S)$. 
 Then, $w\circ f'=0$ and $f'$ factors through the quasi-coherent module associated with $\Ker w_R$.
Then, $\Ker w_R=N'$ and $w=0$.  
 
$(5) \iff (6)$ It is an immediate consequence of the equality $\Hom_{\R}(\mathcal M^*,\mathcal N)=M\otimes_RN$. \end{proof}

\begin{proposition} Let $M$ be a flat Mittag-Leffler module. 
Then, the canonical morphism $$N^*\otimes_R M\to \Hom_R(N,M)$$
is injective for every $R$-module $N$.
\end{proposition}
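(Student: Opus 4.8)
The plan is to realize $N^{*}\otimes_{R}M$ as a Hom-set of $\R$-modules and then feed it into the factorization property \ref{ML}.3 of flat Mittag-Leffler modules. Applying Proposition \ref{1211b}.2 to the quasi-coherent module $\mathcal N$ gives a canonical identification
$$N^{*}\otimes_{R}M=\mathcal N^{*}(R)\otimes_{R}M=\Hom_{\R}(\mathcal N_{sch},\mathcal M),$$
where $\mathcal N_{sch}=((\mathcal N^{*})_{qc})^{*}$ is the $\R$-module scheme associated with the $R$-module $N^{*}$. Composing a morphism $\mathcal N_{sch}\to\mathcal M$ with the natural morphism $u\colon\mathcal N\to\mathcal N_{sch}$ produces a map
$$\Hom_{\R}(\mathcal N_{sch},\mathcal M)\longrightarrow\Hom_{\R}(\mathcal N,\mathcal M)=\Hom_{R}(N,M),$$
and a short computation on generators $\phi\otimes m$ — using the explicit formula of Theorem \ref{prop4} and the description $n\mapsto\tilde n$, $\tilde n(\phi)=\phi(n)$, of $u$ on $R$-points — shows that, under the identification above, this is exactly the canonical morphism $\phi\otimes m\mapsto(n\mapsto\phi(n)m)$. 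So it suffices to prove: a morphism $f\colon\mathcal N_{sch}\to\mathcal M$ with $f\circ u=0$ is zero.

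To prove this I would argue as follows. Since $M$ is flat Mittag-Leffler and $\mathcal N_{sch}=(\widetilde{N^{*}})^{*}$ is a module scheme associated with an $R$-module, Theorem \ref{ML}.3 factors $f$ as $\mathcal N_{sch}\overset{g}{\to}\mathcal W^{*}\overset{\iota}{\hookrightarrow}\mathcal M$, with $\iota$ a monomorphism and $\mathcal W^{*}$ the module scheme of an $R$-module $W$. As $\iota$ is a monomorphism, $f\circ u=0$ forces $g\circ u=0$. Now $g\in\Hom_{\R}(((\mathcal N^{*})_{qc})^{*},\mathcal W^{*})$, which by Proposition \ref{trivial} together with the reflexivity of quasi-coherent modules (Theorem \ref{reflex}) corresponds bijectively to a morphism of quasi-coherent modules $\psi\colon\mathcal W\to(\mathcal N^{*})_{qc}$, i.e.\ to an $R$-linear map $\psi_{R}\colon W\to N^{*}$, with $g=\psi^{*}$. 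Tracking the bijection of \ref{trivial} through the composition $g\circ u$ (using that $u$ corresponds to the natural morphism $c\colon(\mathcal N^{*})_{qc}\to\mathcal N^{*}$, which induces the identity of $N^{*}$ on $R$-points), the vanishing $g\circ u=0$ becomes $c\circ\psi=0$, hence $\psi_{R}=0$, hence $\psi=0$, hence $g=0$ and $f=\iota\circ g=0$. This gives the injectivity.

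The genuinely delicate points — and the steps I expect to require the most care — are the bookkeeping with dual functors: verifying that the map induced by $u\colon\mathcal N\to\mathcal N_{sch}$ is literally the canonical map $N^{*}\otimes_{R}M\to\Hom_{R}(N,M)$, and carrying the correspondence of Proposition \ref{trivial} through $g\circ u$ (equivalently, checking that $g=\psi^{*}$ and therefore acts on $R$-points by precomposition with $\psi_{R}$, so that $g\circ u=0$ really does pin down $\psi_{R}=0$). Everything else is a direct appeal to \ref{1211b}, \ref{trivial}, \ref{reflex} and \ref{ML}.3; in particular it is immediate that $\mathcal N_{sch}=((\mathcal N^{*})_{qc})^{*}$ is a module scheme ``associated with an $R$-module'' in the sense required by \ref{ML}.3, since $(\mathcal N^{*})_{qc}$ is the quasi-coherent module of $N^{*}$.
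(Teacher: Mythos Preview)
Your argument is correct and close in spirit to the paper's, but organized differently. The paper writes $\mathcal M=\ilim{j}\mathcal N_j^*$ as the direct limit of its reflexive $\R$-submodule affine schemes, commutes $\Hom_{\R}(\mathcal N_{sch},-)$ past the limit via Proposition~\ref{L5.11}, applies Proposition~\ref{1211} term by term to identify $\ilim{j}\Hom_{\R}(\mathcal N_{sch},\mathcal N_j^*)=\ilim{j}\Hom_{\R}(\mathcal N,\mathcal N_j^*)$, and then uses that a filtered colimit of injections is an injection. You instead fix a single $f\colon\mathcal N_{sch}\to\mathcal M$ with $f\circ u=0$, factor it through one submodule scheme $\mathcal W^*\hookrightarrow\mathcal M$ via Theorem~\ref{ML}(3), and deduce $g=0$ from $g\circ u=0$.

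Two remarks. First, your final step---showing that $g\circ u=0$ forces $g=0$---is exactly Proposition~\ref{1211}, which asserts that precomposition with $u$ is a bijection $\Hom_{\R}(\mathcal N_{sch},\mathcal W^*)\to\Hom_{\R}(\mathcal N,\mathcal W^*)$; citing it directly would spare you the bookkeeping with \ref{trivial} and \ref{reflex} that you flag as delicate. Second, your approach has the mild advantage of not invoking the direct-limit commutation lemma~\ref{L5.11}, while the paper's chain of equalities is shorter once \ref{1211} and \ref{L5.11} are both on the table. Substantively the two proofs pivot on the same fact, namely \ref{1211}, applied either termwise in a direct system or once after factoring through a single $\mathcal W^*$.
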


\begin{proof}  Let $\{\mathcal N_j^*\}$ be the set of the reflexive $\R$-submodule affine schemes of $\mathcal M$. $\mathcal M=\ilim{j}\mathcal N_j^*$. Then,

$$\aligned N^*\otimes_R M & \overset{\text{\ref{1211b}}}=\Hom_{\R}(\mathcal N_{sch},\mathcal M)=\Hom_{\R}(\mathcal N_{sch},\ilim{j}\mathcal N_j^*)\overset{\text{\ref{L5.11}}}=\ilim{j}\Hom_{\R}(\mathcal N_{sch},\mathcal N_j^*)\\ & \overset{\text{\ref{1211}}}=\ilim{j}\Hom_{\R}(\mathcal N,\mathcal N_j^*)
\hookrightarrow \Hom_{\R}(\mathcal N,\mathcal M)=
\Hom_R(N,M)\endaligned$$\end{proof}

Let us prove some well known results (\ref{CML}, \ref{CMLC} and \ref{CMLD})
on Mittag-Leffler modules (see \cite{Raynaud} and \cite{RaynaudG}).

\begin{corollary} \label{CML} Let $M$ be a flat $R$-module. $M$ is a Mittag-Leffler module iff the natural morphism $M\otimes_R\prod_{i\in I} Q_i\to \prod_{i\in I} (M\otimes_R Q_i)$ is injective, for every set of $R$-modules $\{Q_i\}_{i\in I}$.

\end{corollary}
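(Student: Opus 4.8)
\textbf{Proof plan for Corollary \ref{CML}.}
The plan is to translate the product-injectivity condition into the functorial language of $\overline{(-)}$ and reflexive module schemes, and then invoke Theorem \ref{ML}. First I would observe that $M\otimes_R\prod_i Q_i=\bar{\mathcal M}(\prod_i Q_i)$ and $\prod_i(M\otimes_R Q_i)=\prod_i\bar{\mathcal M}(Q_i)$, so the displayed morphism is exactly the natural map $\bar{\mathcal M}(\prod_i Q_i)\to\prod_i\bar{\mathcal M}(Q_i)$ from Definition \ref{notations}.

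For the forward implication, assume $M$ is flat Mittag-Leffler. By Theorem \ref{ML}.(2), write $\mathcal M=\ilim{i}\mathcal N_i^*$ with each $\mathcal N_i^*$ a reflexive $\R$-submodule affine scheme of $\mathcal M$; in particular the transition maps $\mathcal N_i^*\to\mathcal N_j^*$ are monomorphisms, hence $\overline{\mathcal N_i^*}(Q)\hookrightarrow\overline{\mathcal N_j^*}(Q)$ for every module $Q$ by \ref{L5.112}. Now $\overline{\mathcal N_i^*}(\prod_k Q_k)=\prod_k\overline{\mathcal N_i^*}(Q_k)$ by the affine-scheme half of Proposition \ref{L5.112} (since $\mathcal N_i^*$ is a module affine scheme). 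Taking direct limits and using \ref{2.16}, the map $\bar{\mathcal M}(\prod_k Q_k)=\ilim{i}\overline{\mathcal N_i^*}(\prod_k Q_k)=\ilim{i}\prod_k\overline{\mathcal N_i^*}(Q_k)\to\prod_k\ilim{i}\overline{\mathcal N_i^*}(Q_k)=\prod_k\bar{\mathcal M}(Q_k)$ is the canonical one; injectivity then follows because a filtered colimit of the injective system $\{\overline{\mathcal N_i^*}(Q_k)\hookrightarrow\overline{\mathcal N_j^*}(Q_k)\}$, taken inside a product, injects into the product of the colimits — concretely, if $x\in\ilim{i}\prod_k\overline{\mathcal N_i^*}(Q_k)$ dies in every $\bar{\mathcal M}(Q_k)$, represent $x$ at some stage $i$; each coordinate then dies at some later stage, but since the transition maps are injective the coordinate was already $0$ at stage $i$, so $x=0$.

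For the converse, assume $M$ is flat and the product map is injective for all families. By Theorem \ref{ML} it suffices to show the kernel of any morphism $f\colon\mathcal R^n\to\mathcal M$ is a reflexive $\R$-module affine scheme, equivalently (since $M$ is flat and hence $\mathcal M=\ilim{}L_i$ with $L_i$ finite free) that the inverse system $\{L_i^*\}$ is Mittag-Leffler. The standard argument: the product-injectivity of $M\otimes_R-$ is precisely Raynaud--Gruson's criterion, and one unwinds it as follows. Pick $x\in R^n\otimes_R M$, realized as $f\colon\mathcal R^n\to\mathcal M$; by \ref{a} and Note \ref{Nota}, $f$ factors through $\mathcal M_i$ for a finitely generated $M_i\subseteq M$, and the images $W_i:=\Ima[(R^n)^*\to M_i^*]^*$ give a directed family of submodule schemes through which $f$ factors. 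The product-injectivity forces this family to stabilize: if it did not, one could build a family $\{Q_k\}$ detecting the failure, contradicting injectivity of $M\otimes_R\prod_k Q_k\to\prod_k(M\otimes_R Q_k)$. Hence there is a smallest submodule scheme through which $f$ factors, and by Corollary \ref{6.3}.(3) $M$ is flat Mittag-Leffler.

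The main obstacle is the converse, specifically extracting the stabilization of the family $\{W_i\}$ from product-injectivity: this is exactly the non-formal content of the Raynaud--Gruson characterization, and the cleanest route is probably to quote Corollary \ref{6.3} and reduce to showing that ``for every $x$ there is a smallest submodule scheme through which the associated morphism factors'' is equivalent to the product condition, rather than re-deriving the combinatorics of directed systems by hand. The forward direction is essentially formal once Propositions \ref{L5.112} and \ref{2.16} are in place.
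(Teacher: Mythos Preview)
Your forward direction is correct and matches the paper's argument essentially verbatim: write $\mathcal M$ as the directed union of its reflexive $\R$-submodule affine schemes $\mathcal N_j^*$ (Theorem \ref{ML}), use \ref{2.16} and \ref{L5.112} to identify $\bar{\mathcal M}(\prod_k Q_k)$ with $\ilim{j}\prod_k\overline{\mathcal N_j^*}(Q_k)$, and then observe that a filtered colimit along injective transition maps injects into the product of the colimits.

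For the converse, however, your proposal has a genuine gap. You correctly aim at Corollary \ref{6.3}, but the step ``if the family did not stabilize, one could build a family $\{Q_k\}$ detecting the failure'' is precisely the content that must be supplied, and you leave it as a black box (and then acknowledge as much). The paper avoids any stabilization argument by \emph{dualizing}: instead of seeking the smallest submodule scheme of $\mathcal M$ through which $\mathcal R^n\to\mathcal M$ factors (condition (3) of Corollary \ref{6.3}), it verifies condition (5), that every morphism $f\colon\mathcal M^*\to\mathcal N$ factors through a smallest submodule $N'\subseteq N$. On this side the candidate is obvious: let $\{N_i\}$ be all submodules of $N$ through which $f$ factors and put $N':=\bigcap_i N_i$. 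The sequence $0\to N'\to N\to\prod_i N/N_i$ is exact; tensoring with the flat module $M$ and then applying the product-injectivity hypothesis yields exactness of
\[
0\to\Hom_{\R}(\mathcal M^*,\mathcal N')\to\Hom_{\R}(\mathcal M^*,\mathcal N)\to\textstyle\prod_i\Hom_{\R}(\mathcal M^*,\mathcal N/\mathcal N_i).
\]
Since $f$ dies in each factor of the product by construction, it lies in $\Hom_{\R}(\mathcal M^*,\mathcal N')$, and you are done. The moral: the ``smallest'' object is transparent on the quasi-coherent side (it is an intersection), and the product hypothesis is tailor-made for the exact sequence $0\to\bigcap_i N_i\to N\to\prod_i N/N_i$; your route through condition (3) forces you to manufacture this exact sequence in disguise.
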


\begin{proof} $\Rightarrow)$ Let $\{\mathbb N_j\}$ be the set of the reflexive submodule affine schemes of $\mathcal M$. Then, $\mathcal M=\ilim{i}\mathbb N_i$ and

$$\aligned M\otimes_R \prod_i Q_i & \overset{\text{\ref{1.30}}}=\overline{\mathcal M}(\prod_i Q_i)\overset{\text{\ref{2.16}}}=\ilim{j} (\overline{\mathbb N_j}(\prod_i Q_i))\overset{\text{\ref{L5.112}}}=
\ilim{j} \prod_i (\overline{\mathbb N_j}( Q_i))\\ & \underset{*}\hookrightarrow \prod_i
\ilim{j}  (\overline{\mathbb N_j}( Q_i))\overset{\text{\ref{2.16}}}=\prod_i \overline{\mathcal M}(Q_i)\overset{\text{\ref{1.30}}}=\prod_i (M\otimes_R Q_i)\endaligned$$
($*$ Observe that the morphism $\bar {\mathbb N}_j\to \bar{\mathbb N}_{j'}$ is a monomorphism, for any $j'>j$).

$\Leftarrow)$ Let $f\colon \mathcal M^*\to \mathcal N$ be a morphism of $\R$-modules. Let $\{\mathcal N_i\}$ be the set of the submodules $N_i$ of $N$, such that the morphism $f$ factors through $\mathcal N_i\to \mathcal N$. Let $N':=\cap_{i} N_i$ and $N'_i:=N/N_i$. We have to prove that $f$ factors through $\mathcal N'\to \mathcal N$, by the equivalence of (1) and (3) in Corollary \ref{6.3} .
The obvious sequence of morphisms of modules
$$0\to N'\to N\to \prod_i N'_i$$
is exact. Then, the sequence of morphisms of modules
$$0\to M\otimes_RN'\to M\otimes_RN\to M\otimes_R\prod_i N'_i$$
is exact. The natural morphism $M\otimes_R\prod_{i\in I} N'_i\to \prod_{i\in I} (M\otimes_R N'_i)$ is injective. Then, the sequence of morphisms of modules
$$0\to \Hom_{\R}(\mathcal M^*,\mathcal N')\to \Hom_{\R}(\mathcal M^*,\mathcal N)\to \prod_i \Hom_{\R}(\mathcal M^*,\mathcal N'_i)$$
is exact, and $f$ factors through $\mathcal N'\to\mathcal N$.
\end{proof}

\begin{theorem} An $R$-module $M$ is a flat Mittag-Leffler module iff $\mathcal M$ is a direct limit of  $\R$-submodule affine  schemes.

\end{theorem}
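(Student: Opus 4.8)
The plan is to deduce both implications from Theorem~\ref{ML}. The ``only if'' direction is essentially immediate: if $M$ is a flat Mittag-Leffler module, then by Theorem~\ref{ML} ($(1)\Rightarrow(2)$) the module $\mathcal M$ is the direct limit of its $\R$-submodule schemes associated with $R$-modules, and each such submodule, being of the form $\mathcal N^*=(\Spec S^\cdot_R N)^\cdot$, is an $\R$-module affine scheme.

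For the ``if'' direction, write $\mathcal M=\ilim{i}\mathbb N_i$ with each $\mathbb N_i$ an $\R$-submodule affine scheme of $\mathcal M$. I would verify condition $(6)$ of Theorem~\ref{ML}, namely that the kernel of every morphism $\phi\colon\mathcal R^n\to\mathcal M$ is a module scheme associated with an $R$-module; by that theorem this already yields that $M$ is a flat Mittag-Leffler module. First I factor $\phi$ through one of the $\mathbb N_i$: a morphism out of the quasi-coherent $\mathcal R^n$ amounts, by Proposition~\ref{tercer}, to $n$ elements of $\mathcal M(R)=M=\ilim{i}\mathbb N_i(R)$, and by directedness these lie in $\mathbb N_i(R)\subseteq M$ for some $i$ (one may instead invoke Lemma~\ref{L5.11}). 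Thus $\phi=j_i\circ\psi$ with $\psi\colon\mathcal R^n\to\mathbb N_i$ and $j_i\colon\mathbb N_i\hookrightarrow\mathcal M$ the inclusion, and since $j_i$ is a monomorphism, $\Ker\phi=\Ker\psi$.

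The key step is then to replace $\mathbb N_i$ by a module scheme associated with an $R$-module. Since $\mathbb N_i$ is affine, Theorem~\ref{121} gives that $\mathbb N_i^*$ is quasi-coherent, say $\mathbb N_i^*=\mathcal W_i$ with $W_i:=\mathbb N_i^*(R)$, and that $\mathbb N_{i,sch}=\mathbb N_i^{**}=\mathcal W_i^*$. As $\mathcal M$ is quasi-coherent, hence reflexive (Theorem~\ref{reflex}), it is a dual $\R$-module, so by Proposition~\ref{3.6z} the inclusion $j_i$ factors as $j_i=\bar j_i\circ\iota_i$ through the canonical morphism $\iota_i\colon\mathbb N_i\to\mathbb N_{i,sch}$; since $j_i$ is a monomorphism, so is $\iota_i$, whence $\Ker\psi=\Ker[\iota_i\circ\psi\colon\mathcal R^n\to\mathbb N_{i,sch}]$. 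Finally $\mathcal R^n=(\mathcal R^n)^*$ and $\mathbb N_{i,sch}=\mathcal W_i^*$ are both module schemes associated with $R$-modules, so Lemma~\ref{tonto}(1) identifies this last kernel with $(\Coker[\mathcal W_i\to\mathcal R^n])^*=\mathcal K^*$, where $K:=\Coker[W_i\to R^n]$ is an $R$-module. Hence $\Ker\phi=\mathcal K^*$, condition \ref{ML}(6) holds, and $M$ is a flat Mittag-Leffler module.

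The main obstacle is this last manoeuvre in the ``if'' direction: showing that an affine submodule $\mathbb N_i\subseteq\mathcal M$ is, for the purpose of computing kernels of morphisms out of $\mathcal R^n$, as good as the module scheme $\mathbb N_{i,sch}$ attached to the honest $R$-module $W_i$. It hinges on Theorem~\ref{121} (affineness forces $\mathbb N_i^*$ to be quasi-coherent) together with the factorization of $j_i$ through the canonical monomorphism $\mathbb N_i\to\mathbb N_{i,sch}$ supplied by Proposition~\ref{3.6z}; a subsidiary point, handled by the elementary argument on $R$-points above, is the factorization of $\phi$ through a single term of a merely directed system.
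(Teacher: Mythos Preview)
Your proof is correct, and the ``only if'' direction matches the paper's one-line appeal to Theorem~\ref{ML}. For the ``if'' direction, however, you take a genuinely different route from the paper.

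The paper proceeds by first extracting flatness from Theorem~\ref{T3.7} and then checking the product criterion of Corollary~\ref{CML}: writing $\mathcal M=\ilim{j}\mathbb N_j$, it computes
\[
M\otimes_R\textstyle\prod_i Q_i=\bar{\mathcal M}(\textstyle\prod_i Q_i)=\ilim{j}\bar{\mathbb N}_j(\textstyle\prod_i Q_i)=\ilim{j}\textstyle\prod_i\bar{\mathbb N}_j(Q_i)\hookrightarrow\textstyle\prod_i(M\otimes_R Q_i),
\]
where affineness of $\mathbb N_j$ enters through Proposition~\ref{L5.112} to swap $\bar{\mathbb N}_j$ with the product, and the injectivity of the last arrow uses that the transition maps are monomorphisms. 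Your argument instead verifies condition~(6) of Theorem~\ref{ML} directly: you factor $\phi\colon\mathcal R^n\to\mathcal M$ through a single affine term $\mathbb N_i$, then exploit Theorem~\ref{121} and Proposition~\ref{3.6z} to embed $\mathbb N_i$ into the reflexive scheme $\mathbb N_{i,sch}=\mathcal W_i^*$, reducing the kernel computation to one between module schemes associated with $R$-modules, where Lemma~\ref{tonto}(1) applies. The paper's path highlights the link with the classical product characterization of Mittag-Leffler modules and keeps the affineness hypothesis confined to the single use of Proposition~\ref{L5.112}; your path is arguably more self-contained within the scheme-theoretic framework, bypassing Corollary~\ref{CML} entirely, but it leans on the structural Theorem~\ref{121} (that $\mathbb N_i^*$ is quasi-coherent) and on the reflexivity of $\mathcal M$ to obtain the factorization $j_i=\bar j_i\circ\iota_i$. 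Both uses of affineness are essential at their respective pivot points.
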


\begin{proof} $\Rightarrow)$ It is a consequence of Theorem \ref{ML}.

$\Leftarrow)$ Write $\mathcal M=\ilim{j} \mathbb N_j$, where $\mathbb N_j
\hookrightarrow\mathcal M$ is an 
$\R$-submodule affine scheme, for any $j$.  $M$ is a flat $R$-module, by Theorem \ref{T3.7}.
Besides,

$$\aligned M\otimes_R \prod_i Q_i & \overset{\text{\ref{1.30}}}=\overline{\mathcal M}(\prod_i Q_i)\overset{\text{\ref{2.16}}}=\ilim{j} (\overline{\mathbb N_j}(\prod_i Q_i))\overset{\text{\ref{L5.112}}}=
\ilim{j} \prod_i (\overline{\mathbb N_j}( Q_i))\\ & \hookrightarrow \prod_i
\ilim{j}  (\overline{\mathbb N_j}( Q_i))\overset{\text{\ref{2.16}}}=\prod_i \overline{\mathcal M}(Q_i)\overset{\text{\ref{1.30}}}=\prod_i (M\otimes_R Q_i)\endaligned$$
Then, $M$ is a flat Mittag-Leffler module by Corollary \ref{CML}.

\end{proof}

An immediate consequence of Corollary \ref{CML} is the following corollary.

\begin{corollary} \label{CMLC} Let $f\colon M_1\to M_2$ be a universally injective morphism of $R$-modules. If $M_2$ is a flat Mittag-Leffler module then $M_1$ is a flat Mittag-Leffler module. If $M_1$ and $\Coker f$
are flat Mittag-Leffler modules then $M$ is a flat Mittag-Leffler module.\end{corollary}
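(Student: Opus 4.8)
The plan is to derive both statements from the injectivity criterion of Corollary \ref{CML}: a flat $R$-module $M$ is Mittag-Leffler if and only if the canonical map $M\otimes_R\prod_{i\in I}Q_i\to\prod_{i\in I}(M\otimes_R Q_i)$ is injective for every family $\{Q_i\}_{i\in I}$ of $R$-modules. Universal injectivity of $f$ is precisely what lets one transport this property along $f$.

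For the first assertion I would first check that $M_1$ is flat: if $N'\hookrightarrow N$ is injective, then in the commutative square with horizontal arrows $M_1\otimes_R N'\to M_1\otimes_R N$ and $M_2\otimes_R N'\to M_2\otimes_R N$ and vertical arrows $f\otimes\mathrm{id}$, the vertical arrows are injective (universal injectivity of $f$) and the bottom arrow is injective ($M_2$ flat), so the top arrow is injective. I would then repeat the argument with the square, commutative by naturality of the canonical map, whose horizontal arrows are $M_1\otimes_R\prod_i Q_i\to\prod_i(M_1\otimes_R Q_i)$ and $M_2\otimes_R\prod_i Q_i\to\prod_i(M_2\otimes_R Q_i)$ and whose left vertical arrow is $f\otimes\mathrm{id}$: the left vertical arrow is injective, and the bottom arrow is injective because $M_2$ is flat Mittag-Leffler (Corollary \ref{CML}); hence the top arrow is injective, and Corollary \ref{CML} applied to $M_1$ shows that $M_1$ is flat Mittag-Leffler.

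For the second assertion (in which the statement's ``$M$'' is to be read as $M_2$), observe that $f$ is injective since it is universally injective, so $0\to M_1\to M_2\to\Coker f\to 0$ is exact; because $M_1$ and $\Coker f$ are flat, the long exact $\mathrm{Tor}$ sequence shows $M_2$ is flat. I would then form the commutative diagram with exact rows whose top row is $0\to M_1\otimes_R\prod_i Q_i\to M_2\otimes_R\prod_i Q_i\to\Coker f\otimes_R\prod_i Q_i\to 0$ (exact at the first term by universal injectivity of $f$ and at the remaining terms by right-exactness of the tensor product) and whose bottom row is $0\to\prod_i(M_1\otimes_R Q_i)\to\prod_i(M_2\otimes_R Q_i)\to\prod_i(\Coker f\otimes_R Q_i)\to 0$ (exact because $\Coker f$ is flat, so each factor sequence is short exact, and arbitrary products of short exact sequences of modules are short exact), the vertical arrows $\alpha,\beta,\gamma$ being the canonical maps. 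By Corollary \ref{CML} applied to $M_1$ and to $\Coker f$, the outer maps $\alpha$ and $\gamma$ are injective, and the lower-left arrow, which is $\prod_i(f\otimes\mathrm{id})$, is injective; a diagram chase (the four lemma) then forces $\beta$ to be injective, and Corollary \ref{CML} applied to $M_2$ completes the proof.

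There is no real obstacle: the argument is just a reduction to Corollary \ref{CML} together with elementary diagram chasing. The points that deserve a little care are establishing the flatness of $M_1$ (and of $M_2$) before Corollary \ref{CML} can be invoked, recording that arbitrary direct products are exact in the category of $R$-modules so that the lower row in the second part is exact, and picking the correct variant of the four lemma there.
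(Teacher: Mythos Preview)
Your argument is correct and is exactly the intended one: the paper states that Corollary~\ref{CMLC} is an immediate consequence of Corollary~\ref{CML}, and your proof spells out precisely that deduction via the natural commutative squares and the four/five lemma. The only superfluous remark is your separate mention that $\prod_i(f\otimes\mathrm{id})$ is injective in the second diagram, since this is already contained in the exactness of the bottom row.
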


\begin{examples} Free modules are flat Mittag-Leffler modules, then projective modules are flat Mittag-Leffler modules, because any projective module is a direct summand of a free module. $\mathbb Q$ is not a  flat Mittag-Leffler $\mathbb Z$-module: Consider the injective morphism
$\mathbb Z\to \prod_{n>0} \mathbb Z/n\mathbb Z$, $ m\mapsto (\bar m)_{n>0}$. 
$\mathbb Q\otimes_{\mathbb Z}  \prod_{n>0} \mathbb Z/n\mathbb Z\neq 0$ because $\mathbb Q$ is a flat $\mathbb Z$-module
and $\mathbb Q\otimes_{\mathbb Z}  \mathbb Z=\mathbb Q\neq 0$. $\prod_{n>0} (\mathbb Z/n\mathbb Z\otimes_{\mathbb Z}\mathbb Q)=0$. Then the morphism 
$$(\prod_{n>0} \mathbb Z/n\mathbb Z)\otimes_{\mathbb Z}\mathbb Q\to \prod_{n>0} (\mathbb Z/n\mathbb Z\otimes_{\mathbb Z}\mathbb Q)$$
is not injective and $\mathbb Q$ is not a flat Mittag-Leffler module.

\end{examples}

\begin{theorem} \label{CMLD} Let $M$ be a countably generated  $R$-module. $M$ is a flat Mittag-Leffler module iff $M$ is a projective module.\end{theorem}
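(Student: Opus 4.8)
The implication $(\Leftarrow)$ is already recorded above: every free module is flat Mittag-Leffler (see the Examples), hence so is every direct summand of a free module, i.e.\ every projective module. So the work is in $(\Rightarrow)$, and the plan is to put $M$ into the normal form of a \emph{countable} direct limit $M=\ilim{n\in\mathbb N} L_n$ of finite free $R$-modules with $\{L_n^*\}$ Mittag-Leffler, and then to split the canonical surjection from the countably generated free module $\bigoplus_n L_n$ onto $M$, exploiting that a Mittag-Leffler inverse system indexed by $\mathbb N$ has vanishing first derived inverse limit.

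First I would obtain this normal form. Since $M$ is countably generated and flat, it is a countable direct limit of finite free modules (the countable refinement of the Govorov-Lazard theorem, cf.\ \cite[A6.6]{eisenbud}); and since $M$ is in addition Mittag-Leffler, one may refine the system so that the inverse system $\{L_n^*\}$ satisfies the Mittag-Leffler condition --- at each step only a finite amount of data has to be adjusted, so countability is preserved. (Alternatively one can run this inside the functor category: by \ref{ML}\,(2) and the construction in its proof, $\mathcal M$ is the filtered union of reflexive $\R$-submodule affine schemes $\mathcal N_i^*$ with each $N_i$ finitely generated; since $M$ is countably generated one extracts a countable subchain $\mathcal N_1^*\hookrightarrow\mathcal N_2^*\hookrightarrow\cdots$ whose union is still $\mathcal M$ --- legitimate because an $\R$-submodule of the quasi-coherent module $\mathcal M$ whose value at $R$ is all of $M$ equals $\mathcal M$ --- and then replaces the $N_i$ by compatible finite free covers.) Passing to a subsequence I may assume $\Ima[L_m^*\to L_n^*]=\Ima[L_{n+1}^*\to L_n^*]$ for every $m\ge n+1$.

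Then I would split the surjection. Let $F:=\bigoplus_n L_n$ and $q\colon F\to M$ the canonical surjection attached to the presentation, so that $q|_{L_n}=f_n$, where $f_n\colon L_n\to M$ is the $n$-th structural map and $f_{n+1}\circ g_n=f_n$ for the transition maps $g_n\colon L_n\to L_{n+1}$; set $K:=\Ker q$. It suffices to construct a section of $q$, for then $M$ is a direct summand of the free module $F$, hence projective. A section is a compatible family $s_n\colon L_n\to F$ with $q\circ s_n=f_n$ and $s_{n+1}\circ g_n=s_n$. Choosing a lift $t_n\colon L_n\to F$ of $f_n$ along $q$ (possible as $L_n$ is free), the elements $d_n:=t_n-t_{n+1}\circ g_n$ lie in $\Hom_R(L_n,K)$, and the required corrections $c_n\in\Hom_R(L_n,K)$, with $s_n:=t_n+c_n$, must solve $c_n-c_{n+1}\circ g_n=d_n$; such $c_n$ exist exactly when the class of $(d_n)_n$ in $R^1\plim{n}\Hom_R(L_n,K)$ vanishes. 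Since $L_n$ is finite free one has $\Hom_R(L_n,K)\cong L_n^*\otimes_R K$, with transition maps induced by $L_{n+1}^*\to L_n^*$; tensoring the stabilized factorizations $L_m^*\twoheadrightarrow\Ima[L_{n+1}^*\to L_n^*]\hookrightarrow L_n^*$ with $K$ shows that the inverse system $\{L_n^*\otimes_R K\}$ is again Mittag-Leffler, whence $R^1\plim{}=0$. So the obstruction vanishes, $q$ splits, and $M$ is projective.

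The main obstacle is the first step --- producing the countable finite-free presentation of $M$ with Mittag-Leffler dual system. Once that normal form is in hand, the rest is the classical Mittag-Leffler $R^1\plim{}$-vanishing argument, and it uses nothing beyond \ref{ML} and elementary homological algebra over $\mathbb N$.
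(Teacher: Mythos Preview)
Your splitting argument in the second half is correct, but there is a genuine gap in the first step. The assertion ``since $M$ is countably generated and flat, it is a countable direct limit of finite free modules'' is false without further hypotheses: a module is a countable direct limit of finite free modules only if it is flat and \emph{countably presented}, and countable generation does not force countable presentation even for flat modules. For instance, over a free Boolean ring $R$ on uncountably many idempotent generators $x_s$, the module $R/(x_s:s)$ is cyclic and flat (every module over a von Neumann regular ring is flat) but not countably presented, hence not a countable direct limit of finite free modules. So the Mittag-Leffler hypothesis must already enter in producing the countable system $\{L_n\}$, not merely in arranging the Mittag-Leffler condition on $\{L_n^*\}$ afterwards; the reference to \cite[A6.6]{eisenbud} does not supply this. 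Your parenthetical alternative is closer to a fix, but ``replace the $N_i$ by compatible finite free covers'' is not automatic: the $N_i$ need not embed into finite free modules, and compatibility of the extensions to $\mathcal M$ is an inverse-limit problem of its own.

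The paper sidesteps this issue by never asking for finite free modules. It builds the countable chain $\mathcal N_1^*\subseteq\mathcal N_2^*\subseteq\cdots$ of reflexive $\mathcal R$-submodule affine schemes of $\mathcal M$ directly from Theorem~\ref{ML} (choosing $\mathcal N_n^*$ to contain $m_n$ and $\mathcal N_{n-1}^*$), with no constraint on the $N_n$ beyond the automatic surjectivity of the dual transitions $N_{n+1}\to N_n$ (Lemma~\ref{tonto}). Projectivity is then obtained not by splitting a surjection but by showing $\Hom_R(M,-)$ is right exact: for any epimorphism $P\to P'$ the surjections $N_n\otimes_RP\to N_n\otimes_RP'$ form an inverse system with surjective transition maps, so $\plim{n}(N_n\otimes_RP)\to\plim{n}(N_n\otimes_RP')$ is surjective, and this is $\Hom_R(M,P)\to\Hom_R(M,P')$ by \ref{prop4}. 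Your argument can be rescued by interleaving: having built $\{\mathcal N_n^*\}$ as in the paper, factor each inclusion $\mathcal N_n^*\hookrightarrow\mathcal M$ through some finite free $\mathcal L_n$ via Lemma~\ref{a}, then factor $\mathcal L_n\to\mathcal M$ back through some $\mathcal N_{m}^*$ via \ref{L5.11}; the cofinal subsequence of $\mathcal L$'s then has colimit $\mathcal M$, and your $R^1\plim{}$ argument applies.
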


\begin{proof} $\Rightarrow)$ Write $M=\langle m_n\rangle_{n\in\mathbb N}$.  $\mathcal M^*$ is the union of its reflexive submodule affine schemes, because $M$ is a flat Mittag-Leffler module.
Define $\mathcal N_n^*$ recursively as a submodule scheme of $\mathcal M$ such that
$m_n\in \mathcal N^*(R)$ and $\mathcal N_{n-1}^*\subseteq \mathcal N_n^*$.  Then,
$\mathcal M=\ilim{n\in\mathbb N} \mathcal N_n^*$. 
The obvious morphisms $\pi_n\colon \mathcal N_n\to \mathcal N_{n-1}$ are epimorphisms, by Lemma \ref{tonto}. 

%$\Coker \pi_n$ is a quasi-coherent module and $(\Coker \pi_n)^*$ is the kernel of the monomorphism $\mathcal N_{n-1}^*\hookrightarrow \mathcal N_n^*$, then $(\Coker \pi_n)^*=0$ and $(\Coker \pi_n)^*=0$.

Let $f\colon P\to P'$ be an epimorphism of $R$-modules. The morphisms $N_n\otimes_R P\to N_n\otimes_R P'$ are epimorphisms and, then,  it is easy to prove that the morphism $\plim{n\in\mathbb N} (N_n\otimes_R P)\to \plim{n\in\mathbb N} (N_n\otimes_R P')$ is an epimorphism. Then, from the commutative diagram
$$\xymatrix@R8pt{ \Hom_R(M,P) \ar@{=}[d]^-{\text{\ref{tercer}}} \ar[r] & \Hom_R(M,P') \ar@{=}[d]^-{\text{\ref{tercer}}} \\
\Hom_{\R}(\mathcal M, \mathcal P) \ar@{=}[d] \ar[r] & \Hom_{\R}(\mathcal M,\mathcal P') \ar@{=}[d] \\ \Hom_{\R}(\ilim{n\in\mathbb N}\mathcal N_n^*,\mathcal P) \ar@{=}[d] \ar[r] & \Hom_{\R}(\ilim{n\in\mathbb N} \mathcal N_n^*,\mathcal P') \ar@{=}[d] \\
\plim{n\in\mathbb N} \Hom_{\R}(\mathcal N_n^*,\mathcal P) \ar@{=}[d]^-{\text{\ref{prop4}}} \ar[r] & \plim{n\in\mathbb N}\Hom_{\R}(\mathcal N_n^*,\mathcal P') \ar@{=}[d]^-{\text{\ref{prop4}}} \\ \plim{n\in\mathbb N} ( N_n\otimes_R  P)  \ar[r] & \plim{n\in\mathbb N} (N_n\otimes_R P') }$$
we have that $\Hom_R(M,P)\to\Hom_R(M,P')$ is an epimorphism, hence $M$ is a projective module.

$\Leftarrow)$ Any projective module is a flat Mittag-Leffler module.

\end{proof}

Any projective module is a direct sum of countably generated projective modules (see \cite{Kaplansky}), then any projective module is a direct sum of countably generated  flat Mittag-Leffler modules.

\section{Appendix: Sheaves}  

 Let $\{U_i\}_{i\in I}$ be an open covering of a scheme $X$. We shall say that the obvious morphism $Y=\coprod_{i\in I} U_i\to X$ is an open covering. 

Lemma \ref{2.11} is well known, but it is difficult to find a reference with exactly the same hypothesis.

\begin{definition} Let $\mathbb F$ be a functor of sets.  
$\mathbb F$  is said to be a sheaf in the Zariski topos if 
for any commutative $R$-algebra ${R'}$ and any  open covering $\Spec {R'}_1\to \Spec {R'}$, the sequence of morphisms
$$\xymatrix{ \mathbb F({R'})\ar[r] &  \mathbb F({R'}_1)  \ar@<1ex>[r]  \ar@<-1ex>[r] & 
\mathbb F({R'}_1\otimes_{R'} {R'}_1)}$$
is exact,  that is to say by the Yoneda Lemma,  the sequence of morphisms
$$\xymatrix @C=12pt { \Hom((\Spec {R'})^\cdot,\mathbb F) \ar[r] &  \Hom((\Spec {R'}_1)^\cdot,\mathbb F)   \ar@<1ex>[r]  \ar@<-1ex>[r] & 
\Hom((\Spec {R'}_1\underset{\Spec {R'}}\times\Spec {R'}_1)^\cdot,\mathbb F) }$$
is exact.

\end{definition}

\begin{examples} If $M$ is an $R$-module, $\mathcal M$ is a sheaf in the Zariski topos. 
If $X$ is an $R$-scheme, $X^\cdot$ is a sheaf in the Zariski topos. If  $\{\mathbb F_i\}$ a direct system of  sheaves in the Zariski topos, $\ilim{i} \mathbb F_i$ is a sheaf in the Zariski topos. \end{examples}

Given a morphism $h\colon X\to Y$ of $R$-schemes, let $h^\cdot\colon X^\cdot \to Y^\cdot$ be the induced morphism.

\begin{lemma} \label{2.11} Let $X$ be a scheme and $\pi\colon X_1\to X$ an open covering. Let $\mathbb F$ be a sheaf in the Zariski topos. Then, the sequence of morphisms
$$\xymatrix{ \Hom(X^\cdot,\mathbb F) \ar[r] &  \Hom(X_1^\cdot,\mathbb F)   \ar@<1ex>[r]  \ar@<-1ex>[r] & 
\Hom(X_1^\cdot\times_{X^\cdot} X_1^\cdot,\mathbb F) }$$
is exact.

\end{lemma}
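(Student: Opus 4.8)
The plan is to reduce the assertion to the affine sheaf axiom (the very definition of ``sheaf in the Zariski topos'') by a gluing argument performed over a finite affine open covering of each $\Spec R'$. Exactness of the displayed sequence amounts to two statements: \emph{separatedness}, i.e. $\Hom(X^\cdot,\mathbb F)\to\Hom(X_1^\cdot,\mathbb F)$ is injective; and \emph{gluing}, i.e. every $\beta\in\Hom(X_1^\cdot,\mathbb F)$ whose two images in $\Hom(X_1^\cdot\times_{X^\cdot}X_1^\cdot,\mathbb F)$ coincide is the image of some $\alpha\in\Hom(X^\cdot,\mathbb F)$. Since a morphism of functors $X^\cdot\to\mathbb F$ is the same as a natural family of maps $X^\cdot(R')\to\mathbb F(R')$, and $X^\cdot(R')=\Hom_{R\text{-}sch}(\Spec R',X)$, it suffices to work one $R'$-point $\phi\colon\Spec R'\to X$ at a time.

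First I would fix an affine open covering $\{V_a\}$ of $X$ subordinate to $X_1\to X$: writing $X_1=\coprod_i U_i$, cover each $U_i$ by affine opens of $X$ contained in $U_i$, so each $V_a$ comes with an open immersion $V_a\hookrightarrow U_{i(a)}\hookrightarrow X_1$. Given $\phi$, the opens $\phi^{-1}(V_a)$ cover the quasi-compact space $\Spec R'$, so finitely many basic opens $D(h_1),\dots,D(h_n)$ cover it with $\phi(D(h_c))\subseteq V_{a(c)}$. Put $R'_1:=\prod_c R'_{h_c}$, so that $\Spec R'_1=\coprod_c\Spec R'_{h_c}\to\Spec R'$ is an open covering in the paper's sense (this is the step that really uses quasi-compactness: a finite disjoint union of basic opens is again affine), and $R'_1\otimes_{R'}R'_1=\prod_{c,d}R'_{h_ch_d}$. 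The restrictions $\phi|_{D(h_c)}$ assemble into $\phi_1:=\phi\circ p\colon\Spec R'_1\to X$ (with $p\colon\Spec R'_1\to\Spec R'$), and since each $\phi|_{D(h_c)}$ factors through $V_{a(c)}\hookrightarrow X_1$ they lift to a morphism $\psi_1\colon\Spec R'_1\to X_1$ with $\pi\circ\psi_1=\phi_1$. Now set $\xi:=\beta_{R'_1}(\psi_1)\in\mathbb F(R'_1)$. The two pullbacks of $\psi_1$ along the projections $q_1,q_2\colon\Spec(R'_1\otimes_{R'}R'_1)\to\Spec R'_1$ have the same composite with $\pi$ (because $p q_1=p q_2$), hence define a point of $(X_1\times_XX_1)^\cdot(R'_1\otimes_{R'}R'_1)$; applying the hypothesis on $\beta$ together with its naturality yields $\mathbb F(q_1)(\xi)=\mathbb F(q_2)(\xi)$. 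By the affine sheaf axiom for $\Spec R'_1\to\Spec R'$ there is a unique $x\in\mathbb F(R')$ with $\mathbb F(p)(x)=\xi$, and I define $\alpha_{R'}(\phi):=x$.

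What remains is a sequence of short diagram chases: (i) independence of $\alpha_{R'}(\phi)$ of the chosen $V_a$'s, $D(h_c)$'s and lift $\psi_1$; (ii) naturality of $\alpha$ in $R'$; (iii) $\alpha\circ\pi^\cdot=\beta$; (iv) separatedness. Each rests on the same recurring observation: any two lifts to $X_1$ of one morphism to $X$ define a point of $X_1\times_XX_1$, so they are identified by $\beta$; combined with the injectivity of $\mathbb F(R')\to\mathbb F(R'_1)$ (separatedness of $\mathbb F$ at an affine covering) and with the naturality of $\beta$, this gives (iii) and (iv) at once. For (i) and (iv) one passes to a common refinement of the finite basic coverings used; for (ii) one pulls the covering of $\Spec R'$ back along $R'\to T$ and uses the uniqueness part of the sheaf axiom; for (iii) one compares the constructed lift $\psi_1$ with the honest restriction of the given $\psi\in X_1^\cdot(R')$.

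I expect the main obstacle to be organizational rather than conceptual: the only sheaf condition at one's disposal is the affine one, so at every step the gluing data must be displayed over a genuine affine scheme $\Spec R'_1=\Spec\prod_c R'_{h_c}$, and one must keep track of the (non-canonical) lifts to $X_1$ and neutralize their ambiguity using the equalizer hypothesis on $\beta$. No property of $\mathbb F$ beyond the stated sheaf axiom is needed, so once this affine reduction is set up carefully the verifications (i)--(iv) are routine.
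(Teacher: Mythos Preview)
Your proposal is correct and follows essentially the same route as the paper: work one $R'$-point $\phi\colon\Spec R'\to X$ at a time, produce an \emph{affine} open covering $\Spec R'_1\to\Spec R'$ over which $\phi$ lifts to $X_1$, apply the affine sheaf axiom to glue, and verify independence of choices via a common refinement. The only difference is expository: you spell out how to obtain an affine $\Spec R'_1$ (finite product of basic localizations, using quasi-compactness of $\Spec R'$) and build the lift via a subordinate affine covering $\{V_a\}$ of $X$, whereas the paper simply asserts the existence of such $y\colon\Spec R'_1\to\Spec R'$ and $x_1\in X_1^\cdot(R'_1)$ and proceeds directly.
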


\begin{proof} Recall
$\Hom((\Spec {R'})^\cdot,\mathbb F)=\mathbb F({R'})$ by the Yoneda Lemma.

Let $\pi_1,\pi_2 \colon  X_1\times_X X_1 \dosflechas X_1$ be the obvious projections.  

Let $f,f' \in \Hom(X^\cdot,\mathbb F) $ and 
  $x\in X^\cdot({R'})=\Hom_{\Spec R}(\Spec {R'},X)$. Suppose 
$f_{R'}(x)\neq f'_{R'}(x)$, that is to say, $f\circ x^\cdot \neq f'\circ x^\cdot$. There exist an open covering  $y\colon \Spec {R'}_1\to \Spec {R'}$  and $x_1\in 
X_1^\cdot({R'}_1)= \Hom_{\Spec {R'}}(\Spec {R'}_1,X_1)$ such that the diagram
$$\xymatrix{\Spec {R'}_1 \ar[d]_-{x _1} \ar[r]^-y & \Spec {R'} \ar[d]_-{x}  \\ X_1 \ar[r]_-\pi & X}$$
is commutative.  Observe that $f\circ x^\cdot \circ y^\cdot \neq f'\circ x^\cdot\circ y^\cdot$ because
the morphism $$\Hom((\Spec {R'})^\cdot,\mathbb F)=\mathbb F({R'})\to \mathbb F({R'}_1)=\Hom((\Spec {R'}_1)^\cdot,\mathbb F)$$ is injective. Then, $f\circ \pi^\cdot \circ x_1^\cdot \neq f'\circ \pi^\cdot \circ x_1^\cdot$
and $f\circ \pi^\cdot \neq f'\circ \pi^\cdot $. Therefore,  $$\Hom(X^\cdot,\mathbb F) \to \Hom(X_1^\cdot,\mathbb F)$$ is injective.

Let $f_1\in  \Hom(X_1^\cdot,\mathbb F) $ be a morphism such that $f_1\circ \pi_1^\cdot=f_1\circ \pi_2^\cdot$.
Given a morphism $x\in X^\cdot({R'})=\Hom(\Spec {R'},X)$, there exist an open covering $y\colon \Spec {R'}_1\to \Spec {R'}$ and $x_1\in 
X_1^\cdot({R'}_1)=\Hom(\Spec {R'_1},X)$ such that the diagram
$$\xymatrix{\Spec {R'}_1 \ar[d]_-{x _1} \ar[r]^-y & \Spec {R'} \ar[d]^-{x}  \\ X_1 \ar[r]_\pi & X}$$
is commutative.  There exists a unique morphism $g\colon (\Spec {R'})^\cdot\to\mathbb F$
such that the diagram
$$\xymatrix{(\Spec {R'}_1\times_{\Spec {R'}}\Spec {R'}_1)^\cdot \ar@<1ex>[r]  \ar@<-1ex>[r]  \ar[d]_-{x_1^\cdot \times x_1^\cdot} & (\Spec {R'}_1)^\cdot \ar[d]_-{x_1^\cdot}  \ar[r] & (\Spec {R'})^\cdot \ar[d]_-{x^\cdot}  \ar@/^5pc/[ldd]_-{\,\,g}
\\
X_1^\cdot\times_{X^\cdot} X_1^\cdot \ar@<1ex>[r]^-{\pi_1^\cdot}  \ar@<-1ex>[r]_-{\pi_2^\cdot} \ar[rd] & X_1^\cdot  \ar[d]^{f_1} \ar[r]^-{\pi^\cdot} & X^\cdot\\  & \mathbb F & }$$
is commutative, because $\mathbb F$ is a sheaf. Let $f\colon X^\cdot \to \mathbb F$ be defined by $f_{R'}(x):=g$.
This morphism is well defined, that is, $g$ does not depend on the choice of $y$ and $x_1$: Let $y_2\colon \Spec R'_2\to \Spec R$ be an open covering  and a commmutative diagram
$$\xymatrix{\Spec {R'}_2 \ar[d]_-{x _2} \ar[r]^-{y_2} & \Spec {R'} \ar[d]^-{x}  \\ X_1 \ar[r]_\pi & X}$$
Consider the obvious open covering $y_3\colon \Spec (R'_1)\coprod  \Spec(R'_2)\to \Spec R$ and the obvious morphism $x_3\colon \Spec (R'_1)\coprod \Spec (R'_2)\to  X_1$. It is easy to prove that the morphism $(\Spec R')^\cdot\to \mathbb F$ defined by $y$ and $x_1$ is equal to the morphism defined by  $y_3$ and $x_3$, and that the  morphism defined
by $y_3$ and $x_3$ is equal to the  morphism defined by  $y_2$ and $x_2$.

Finally, $f_1=f\circ \pi^\cdot$: Let $x'\in X_1^\cdot(R')=\Hom(\Spec R',X_1)$ and let $x:=\pi\circ x'\in \Hom(\Spec R',X_1)$. Let us follow previous notations. Then, $R'_1=R'$,  $x_1=x'$ and $g=f_1\circ {x'}^{\cdot}$. Therefore,
$(f\circ \pi^\cdot)_{R'} (x')=f_{R'}(x)=g=f_1\circ {x'}^{\cdot}={f_1}_{R'}(x')$.

\end{proof} 

Observe that the morphism $\Hom(X^\cdot,\mathbb F) \to \Hom(X_1^\cdot,\mathbb F)$  is injective, then 
for any open covering $X_2\to X_1\times_X X_1$, the sequence of morphisms
$$\xymatrix{ \Hom(X^\cdot,\mathbb F) \ar[r] &  \Hom(X_1^\cdot,\mathbb F)   \ar@<1ex>[r]  \ar@<-1ex>[r] & 
\Hom( X_2^\cdot,\mathbb F) }$$
is exact.

\begin{proposition} \label{2.10} \label{L5.11} Let $X$ be an $R$-module  quasi-compact and quasi-separated  scheme and $\{\mathbb F_i\}$ a direct system of  $\R$-modules. Assume $\mathbb F_i$ is a sheaf in the Zariski topos, for any $i$. Then,
$$\Hom_{\R}(X^\cdot, \ilim{i} \mathbb F_i )=\ilim{i} \Hom_{\R}(X^\cdot, \mathbb F_i ).$$
\end{proposition}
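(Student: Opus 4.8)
The plan is to reduce the statement to the sheaf axiom via an open cover by affine (module) schemes, and then handle the affine case by a direct computation with $\bar{\mathbb F}$. The key point is that a quasi-compact quasi-separated scheme $X$ admits a finite affine open cover $\{U_k\}$, and the pairwise intersections $U_k\cap U_l$ are again quasi-compact (by quasi-separatedness) hence covered by finitely many affine opens. Writing $X_1=\coprod_k U_k\to X$, this is an open covering in the sense of the Appendix, and $X_1\times_X X_1$ is again covered by an affine scheme $X_2$. So, applying Lemma \ref{2.11} (in the strengthened form noted right after its proof) to each sheaf $\mathbb F_i$ and to $\ilim{i}\mathbb F_i$ (which is a sheaf, being a direct limit of sheaves), one gets exact rows fitting into a commutative ladder. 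Since filtered direct limits are exact and commute with the finite products appearing in $\Hom(X_1^\cdot,-)=\prod_k\Hom(U_k^\cdot,-)$ and $\Hom(X_2^\cdot,-)$, the five-lemma-style diagram chase reduces the claim to the affine case: it suffices to prove $\Hom_{\R}(X^\cdot,\ilim{i}\mathbb F_i)=\ilim{i}\Hom_{\R}(X^\cdot,\mathbb F_i)$ when $X=\Spec A$ is an affine $R$-module scheme.

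For the affine case, the plan is to use the adjunction-type description of morphisms out of $X^\cdot$ in terms of the functor $\bar{(-)}$ from Definition \ref{notations}. By Proposition \ref{nuevo}, $\overline{X^\cdot}(N)=\Der_R(A,N)$, and a morphism of $\R$-modules $X^\cdot\to\mathbb F$ should be recoverable from the induced natural transformation $\overline{X^\cdot}\to\bar{\mathbb F}$ together with the decomposition $\mathbb F(R\oplus N)=\mathbb F(R)\oplus\bar{\mathbb F}(N)$; more precisely, a morphism of $\R$-modules is determined by its values on elements of the form coming from $R\oplus N$, and one checks $\Hom_{\R}(X^\cdot,\mathbb F)=\Hom(\overline{X^\cdot},\bar{\mathbb F})$ (natural transformations of functors from $R$-modules to $R$-modules), since $X^\cdot$ being an $\R$-module \emph{scheme} means it is "generated" by such test objects. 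Then, using $\overline{\ilim{i}\mathbb F_i}=\ilim{i}\bar{\mathbb F_i}$ from Proposition \ref{2.16}, and the fact that $\Der_R(A,-)$ is a covariant functor so that a natural transformation out of it into a filtered colimit of functors — evaluated objectwise — factors through a finite stage \emph{provided} $A$ is finitely generated as an $R$-algebra over the relevant pieces, the result follows. The quasi-compactness and quasi-separatedness of $X$ is exactly what guarantees the finiteness needed to commute $\Hom(X^\cdot,-)$ past the filtered colimit.

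\textbf{The main obstacle} I expect is the affine case: showing that $\Hom_{\R}(X^\cdot,-)$ commutes with filtered direct limits of sheaves, i.e. that every morphism $X^\cdot\to\ilim{i}\mathbb F_i$ factors through some $\mathbb F_i$ and that two such factorizations agree at a later stage. Naively $X^\cdot(R')=\Hom_{R\text{-alg}}(A,R')$ is not a finitely generated object in any obvious sense, so one cannot just say "pick a generator". The resolution must exploit that $A$ carries a grading $A=\oplus_{n}A_n$ from the $\mathbb A^1_R$-action (as in the proof of Theorem \ref{121}), with $A_0=R$, and that the module scheme structure means a morphism of $\R$-modules $X^\cdot\to\mathbb F$ is controlled by its "degree-one part", i.e. by $\overline{X^\cdot}=\mathcal{(A_1)}$-related data, or at least by finitely much information when $X$ is quasi-compact — this is where one cashes in quasi-compactness to get a finite affine cover and hence only finitely many generators of the relevant modules to track across the colimit. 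Once that finiteness is isolated, the colimit interchange for both injectivity ("factorizations agree eventually") and surjectivity ("every morphism factors") is the standard argument. Quasi-separatedness enters only in the globalization step to ensure $X_1\times_X X_1$ is itself quasi-compact so that Lemma \ref{2.11} applies with an affine $X_2$.
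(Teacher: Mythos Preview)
Your globalization step via Lemma \ref{2.11} is the right idea and matches the paper, but there are two genuine gaps.

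First, the reduction to affines cannot be done for $\Hom_{\R}$ directly: the open pieces $U_k\subset X$ are not $\R$-module schemes (they need not contain the zero section, need not be stable under addition, etc.), so ``$\Hom_{\R}(U_k^\cdot,\mathbb F_i)$'' makes no sense. The sheaf ladder only yields
\[
\Hom(X^\cdot,\ilim{i}\mathbb F_i)=\ilim{i}\Hom(X^\cdot,\mathbb F_i)
\]
for \emph{plain} functor morphisms. This is exactly what the paper proves first.

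Second, and more importantly, you are massively overcomplicating the affine case. If $X=\Spec A$, then by Yoneda $\Hom(X^\cdot,\mathbb G)=\mathbb G(A)$ for any functor $\mathbb G$, so
\[
\Hom(X^\cdot,\ilim{i}\mathbb F_i)=(\ilim{i}\mathbb F_i)(A)=\ilim{i}\mathbb F_i(A)=\ilim{i}\Hom(X^\cdot,\mathbb F_i).
\]
That is the entire affine argument; no derivations, no grading, no $\bar{(-)}$. Your proposed identity $\Hom_{\R}(X^\cdot,\mathbb F)=\Hom(\overline{X^\cdot},\bar{\mathbb F})$ is not established anywhere and is dubious for a general functor $\mathbb F$, since $\mathbb F$ is not determined by its values on the algebras $R\oplus N$.

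The step you are missing is how to pass from $\Hom$ to $\Hom_{\R}$. The paper does this at the very end: given $f\in\Hom_{\R}(X^\cdot,\ilim{i}\mathbb F_i)$, apply the already-proved plain-$\Hom$ statement to the qcqs schemes $X$, $X\times_R X$, and $\mathbb A^1_R\times_R X$ to factor the two commuting squares (for $+$ and for the $\R$-action) simultaneously through some $\mathbb F_k$. This forces the factoring map $f_k\colon X^\cdot\to\mathbb F_k$ to be $\R$-linear. Quasi-compactness and quasi-separatedness are used precisely so that $X\times_R X$ is again qcqs and the plain-$\Hom$ result applies to it.
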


\begin{proof} If $X=\Spec A$ is an affine scheme,  $$
\Hom(X^\cdot,\ilim{i} \mathbb F_i )=(\ilim{i} \mathbb F_i )(A)=\ilim{i} \mathbb F_i (A)= \ilim{i} \Hom(X^\cdot,\mathbb F_i )$$ by the Yoneda Lemma.
Let $X_1=\Spec A_1\to X$ be an affine open covering and let $X_2=\Spec A_2\to X_1\times_X X_1$ be an affine open covering.  Consider the commutative diagram 
$$\xymatrix{\Hom(X^\cdot,\ilim{i} \mathbb F_i ) \ar[r] & \Hom(X_1^\cdot,\ilim{i} \mathbb F_i )  \ar@<1ex>[r]  \ar@<-1ex>[r] \ar@{=}[d] & \Hom(X_2^\cdot,\ilim{i} \mathbb F_i ) \ar@{=}[d] \\ \ilim{i} \Hom(X^\cdot, \mathbb F_i ) \ar[r] & \ilim{i} \Hom(X_1^\cdot, \mathbb F_i ) \ar@<1ex>[r]  \ar@<-1ex>[r]  & \ilim{i} \Hom(X_2^\cdot, \mathbb F_i )}$$
Then, $\Hom(X^\cdot,\ilim{i} \mathbb F_i )=\ilim{i} \Hom(X^\cdot,\mathbb F_i )$.

Let us prove that  $\Hom_{\R}(X^\cdot,\ilim{i} \mathbb F_i )=
\ilim{i} \Hom_{\R}(X^\cdot,\mathbb F_i )$: Let $f\in \Hom_{\R}(X^\cdot,\ilim{i} \mathbb F_i )$. Consider the commutative diagram
$$\xymatrix{ X^\cdot \times X^\cdot \ar[r]^-+ \ar[d]^-{f\times f} & X^\cdot \ar[d]^-f\\
\ilim{i} \mathbb F_i \times \ilim{i} \mathbb F_i  \ar[r]^-+ & \ilim{i} \mathbb F_i }$$
Then, there exists a morphism $f_k\colon X^\cdot \to \mathbb F_k$ 
such that $f$ is equal to the composite morphism $X^\cdot \overset{f_k} \to \mathbb F_k\to \ilim{i}\mathbb F_i $ and the diagram
$$\xymatrix{ X^\cdot \times X^\cdot \ar[r]^-+ \ar[d]^-{f_k\times f_k} & X^\cdot \ar[d]^-{f_k}\\
\mathbb F_k\times  \mathbb F_k \ar[r]^-+ &  \mathbb F_k}$$
is commutative, i.e., $f_k$ is linear. Likewise, we can prove that $f_k$ is $\R$-linear.

\end{proof}

\end{document}